\def\papermode{1}
\newcommand{\wreathproduct}{lexicographic product}
\newcommand{\directproduct}{full product}
\newcommand{\ageindivisible}{indivisible}
\newcommand{\ageindivisibility}{indivisibility}
\newcommand{\AgeIndivisibility}{Indivisibility}
\newcommand{\Ageindivisibility}{Indivisibility}
\newtheorem{theorem}{Theorem}[section]
\newtheorem{corollary}[theorem]{Corollary}
\newtheorem{lemma}[theorem]{Lemma}
\newtheorem{proposition}[theorem]{Proposition}
\newtheorem{question}[theorem]{Question}
\theoremstyle{remark}
\newtheorem{remark}[theorem]{Remark}
\newtheorem{example}[theorem]{Example}
\theoremstyle{definition}
\newtheorem{definition}[theorem]{Definition}
\newcommand{\fraisse}{Fra\"{i}ss\'{e}}
\newcommand{\KK}{\mathbf{K} }
\newcommand{\CC}{\mathcal{C} }
\newcommand{\I}{\mathcal{I}}
\newcommand{\M}{\mathcal{M}}
\newcommand{\N}{\mathcal{N}}
\newcommand{\C}{\mathcal{C}}
\newcommand{\LL}{\mathcal{L}}
\newcommand{\Kp}{\mathcal{K}}
\newcommand{\E}{\mathcal{E}}
\newcommand{\oa}{\overline{a} }
\newcommand{\ob}{\overline{b} }
\newcommand{\oc}{\overline{c} }
\newcommand{\od}{\overline{d} }
\newcommand{\ox}{\overline{x} }
\newcommand{\oy}{\overline{y} }
\newcommand{\oz}{\overline{z} }
\newcommand{\btimes}{\boxtimes }
\newcommand{\mathiff}{\text{iff }}
\newcommand{\wreath}{\wr }
\newcommand{\qfequiv}[1]{\equiv^{\mathrm{qf}}_{#1}}
\newcommand{\qfclass}[3]{[#1]^{#2}_{#3}}
\DeclareMathOperator{\tp}{tp}
\DeclareMathOperator{\qftp}{qftp}
\DeclareMathOperator{\sig}{sig}
\DeclareMathOperator{\arity}{arity}
\DeclareMathOperator{\age}{age}
\DeclareMathOperator{\Flim}{Flim}
\DeclareMathOperator{\emb}{emb}
\DeclareMathOperator{\Aut}{Aut}
\DeclareMathOperator{\Pow}{\mathcal{P}}
\DeclareMathOperator{\Fml}{Fml}
\newcommand{\GG}{\mathbf{G} }
\newcommand{\HH}{\mathbf{H} }
\newcommand{\PG}{\mathbf{PG} }
\newcommand{\LO}{\mathbf{LO} }
\newcommand{\EE}{\mathbf{E} }
\newcommand{\PO}{\mathbf{PO} }
\newcommand{\TT}{\mathbf{T} }
\newcommand{\FF}{\mathbf{F} }
\newcommand{\DG}{\mathbf{DG} }
\title{Products of Classes of Finite Structures}
\author[V Guingona]{Vincent Guingona}
\address{Towson University, 7800 York Rd., Towson, MD, 21252}
\email{vguingona@towson.edu}
\author[M Parnes]{Miriam Parnes}
\address{Towson University, 7800 York Rd., Towson, MD, 21252}
\email{mparnes@towson.edu}
\author[L Scow]{Lynn Scow}
\address{California State University, San Bernardino, 5500 University Parkway, San Bernardino, CA 92407}
\email{lscow@csusb.edu}
\thanks{\emph{MSC2020}: 03C52, 03C45, 03E02.}
\begin{document}

\begin{abstract}
	We study the preservation of certain properties under products of classes of finite structures.  In particular, we examine indivisibility, definable self-similarity, the amalgamation property, and the disjoint $n$-amalgamation property.  We explore how each of these properties interacts with the \wreathproduct, \directproduct, and free superposition of classes of structures.  Additionally, we consider the classes of theories which admit configurations indexed by these products.  In particular, we show that, under mild assumptions, the products considered in this paper do not yield new classes of theories.
\end{abstract}

\maketitle

\section{Introduction}\label{Section_Introduction}

In this paper, we consider classes of finite structures in a relational language $L$, and closed under $L$-isomorphism.  We study when certain partition properties are preserved under product operations on pairs of classes.  As an example of such a partition property, a class $\KK$ of finite $L$-structures is \emph{indivisible} if for all $A \in \KK$, there exists $B \in \KK$ such that for all 2-colorings of the elements in $B$, there exists an $L$-isomorphic copy $A'$ of $A$ in $B$, such that all elements of $A'$ are colored the same color (see Definition \ref{Definition_AgeIndivis}).  This is the usual definition of age indivisibility for a countable structure $\Gamma$ when $\KK$ is the age of $\Gamma$, but we study it in a more general context.  It was shown in \cite{ezs} that age indivisibility of $\Gamma$ follows from \emph{indivisibility} of $\Gamma$, which latter property holds when any finite partition of $\Gamma$ contains an isomorphic copy of $\Gamma$ in one of the pieces of the partition.  It is not too hard to see that the rational order and the Rado graph are indivisible.  In \cite{Henson}, the homogeneous $K_n$-free graphs $H_{2,n}$ (for $n \geq 3$) were shown to be age indivisible, and they were shown to be indivisible in \cite{KR86} and \cite{ezs89}.

In \cite{GuiPar} the notion of definable self-similarity was introduced for infinite structures $\Gamma$.  In the case that $\Gamma$ is a homogeneous relational structure and $\age(\Gamma)$ has the strong amalgamation property, $\Gamma$ is \emph{definably self-similar} if and only if for any finite set $A \subseteq \Gamma$ and $x \notin A$, the orbit of $x$ under the group of automorphisms of $\Gamma$ fixing $A$ pointwise is isomorphic to $\Gamma$, which latter property was shown in A.4.9 of \cite{FrSa00} to imply that $\Gamma$ is indivisible.  Though the Rado graph is definably self-similar, the $H_{2,n}$ (for $n \geq 3$) are not, as worked out in the paragraph after Fact 1.2 of \cite{sauer2020}.  In Definition \ref{Definition_DefinablySelfSim}, we define a more general notion of definable self-similarity for a class $\KK$ of finite structures, which restricts to the previous notion in the case that $\KK$ is a \fraisse~class of structures in a relational language with the strong amalgamation property.  In Definition \ref{Definition_DisjNAmalg}, we adapt a notion of disjoint $n$-amalgamation from \cite{Kru} and prove, in Proposition \ref{Proposition_DisAmalgDefSelfSim}, that a class $\KK$ of finite structures in a relational language with the hereditary property and exactly one singleton structure, up to isomorphism, is definably self-similar if it has disjoint 3-amalgamation.

Given two classes of finite structures $\KK_0$ and $\KK_1$, there are several ways to form a ``product'' of these classes, as described in Definition \ref{Definition_Wreath_Classes}: the free superposition $\KK_0 * \KK_1$, the full product $\KK_0 \btimes \KK_1$, and the lexicographic product $\KK_0 \wreath \KK_1$.  The free superposition of classes was defined in \cite{Cam90} and later studied in \cite{bod14} and \cite{bod15} in the context of the Ramsey property (see Definition \ref{Definition_RP}).  The full product of structures $\Gamma_0 \btimes \Gamma_1$ (see Definition \ref{Definition_Direct}) is an adjunct to this study in the latter two papers, and we propose what we consider to be the appropriate generalization to classes, as evidenced by Lemma \ref{Lemma_AgeProducts}.  The lexicographic product of classes is based on the lexicographic product of structures studied in \cite{Meir16}, \cite{Meir22} and \cite{Tou} (see Definition \ref{Definition_Wreath} and the discussion thereafter).  The preservation of strong amalgamation by free superpositions was known (see Proposition \ref{Proposition_Known}) and we point out some further \fraisse-type properties preserved by lexicographic and full products in Proposition \ref{Proposition_WreathDirectAP}.

Our results on when indivisibility is preserved by lexicographic and full products of classes are contained in Theorems \ref{Theorem_WreathAgeInd} and \ref{Theorem_DirectAgeInd}, respectively, however the question for free superposition remains open (see Question \ref{Question_FreeAgeInd}).  Of the three ``special'' products that we study, only free superposition preserves the property of being definably self-similar (see Subsection \ref{Subsection_DSS}).  Our results on when disjoint $n$-amalgamation is preserved by lexicographic products and free superposition of classes are contained in Theorems \ref{Theorem_WreathStrongAmalg} and \ref{Theorem_FreenAmalgamation}, respectively, with counterexamples for the full product given in the same subsection.  A table summarizing the known results on preservation of partition properties by products of classes is given in the Conclusion.

The motivation for this paper comes from model theory, in particular, the study of ``dividing lines" in classification theory, where each complete first-order theory $T$ falls on either the ``more-complicated'' or the ``less-complicated'' side of any one dividing line.  The property of being stable is one such dividing line given in \cite{sh78}, building on previous work from, e.g., \cite{Morley} (see the Historical Remarks in \cite{sh78} for more details.)  A theory $T$ is \emph{stable} if there is no formula $\varphi(\ox;\oy)$ that defines a linear order relation on an infinite set of parameters $(\oa_i : i<\omega)$ from a model of $T$.  A theory $T$ does not have the \emph{independence property} (i.e., is \emph{NIP}) if there is no formula $\varphi(\ox;\oy)$ that defines a random (Rado) graph relation on an infinite set of parameters from a model of $T$.  The existence of a formula $\varphi(\ox;\oy)$ and parameters that witness certain positive and negative instances of $\varphi(\ox;\oy)$ in some model, such as the two properties defined above, may be informally referred to as the existence of a ``definable configuration''. 

For certain definable configurations, it can be useful to define an ordinal-valued rank which measures the extent to which we can achieve the configuration within a partial type $\pi(\ox)$.  Some examples of these ranks are dp-rank and burden which measure the extent to which inp- and ict-patterns (defined in \cite{sh78}) can be achieved within the type, respectively.  In \cite{KOU}, additivity of dp-rank was shown, and in \cite{Adler}, it was proved that in NIP theories, the dp-rank of a type is equal to its burden. In \cite{Chernikov}, indiscernible sequences were used to show the sub-multiplicativity of burden. The first two authors of this paper proposed a notion of $\KK$-rank for a general class $\KK$ in \cite{GuiPar}, and asked the natural question of when $\KK$-rank is additive.  If $\KK$ is the class of finite linear orders, $\KK$-rank coincides with op-dimension in NIP theories (Proposition 5.9 of \cite{GuiPar}) and if $\KK$ is the class of finite equivalence relations, the $\KK$-rank of the partial type $\oy=\oy$ coincides with its dp-rank.  In many of the previous results, it was the property of being definably self-similar or indivisible that allowed $\KK$-ranks to be so well understood (see Propositions 4.13 and 4.14 of \cite{GuiPar}) which motivates our study, in this paper, of the operations on classes that preserve these desirable properties.  

In \cite{sh78}, $I$-indexed (generalized) indiscernible sequences were introduced to better understand dividing lines (see Definition \ref{Definition_GenInd}).  In \cite{sh78}, it was shown that a theory $T$ is stable if and only if any indiscernible sequence in any model $M \vDash T$ is an indiscernible set.  This result was generalized in \cite{Scow2012} to the case of NIP theories, with the observation that $M \vDash T$ ``forces a reduction on the language of the index model.''  This was a starting point for the work in \cite{GuiHillScow}, where a precise definition is given of the class of theories $T$ characterized by a certain indiscernible ``collapse''.  One limitation of the ``collapse'' approach is that the Ramsey property for the class $\KK = \age(I)$ was used in an essential way to show that certain definable configurations can be captured by certain $I$-indexed indiscernible sequences, as shown in \cite{Scow2012}.  This was one motivation for the definition of ``$T$ admits a $\KK$-configuration'' (see Definition \ref{Definition_Configuration}) which generalizes the property ``$T$ admits a non-collapsing $\Flim(\KK)$-indexed indiscernible'', in the case that $\KK$ has the Ramsey property.  The notion of $\KK$-configuration was first developed in \cite{GuiHill} and later generalized in \cite{GuiPar}.  Let $\CC_\KK$ denote the class of complete theories that admit a $\KK$-configuration.  Certain classes of theories on the ``complicated'' side of the dividing line are exactly captured as $\CC_\KK$, for certain classes $\KK$ (see Remark \ref{Remark_DividingLines}).

Our incomplete understanding of the classes $\CC_\KK$ motivated us to attempt to find classes $\KK$ such that $\CC_\KK$ had not been previously studied.  Though we investigate some new classes of finite structures in a binary signature, we do not end up introducing any previously unknown classes $\CC_\KK$ in this way.  Proposition \ref{Proposition_POTTDGGG} determines that $\CC_\KK$, where $\KK$ is either the class of finite partial orders, the class of finite tournaments, or the class of finite directed graphs, coincides with $\CC_{\GG}$, the class of complete theories with IP.  In Section \ref{Section_Configurations}, we pose Question \ref{Question_Classes}, some form of which was stated in \cite{GuiHill} and \cite{GuiPar}, which essentially asks if the $\CC_\KK$ are linearly ordered under inclusion, for any class $\KK$ of finite structures in a relational signature.  With the techniques developed in this paper, we were not able to construct a counterexample to the linear order.  Namely, Theorem \ref{Theorem_Classes} states that, under mild assumptions on $\KK_0$ and $\KK_1$, if $\KK'$ is one of our special products of the classes $\KK_0$ and $\KK_1$, then $\C_{\KK'}$ is equal to the class $\CC_{\KK_0} \cap \CC_{\KK_1}$.  This result inspires some further questions that we state in Section \ref{Section_Conclusion}.

\section{Preliminaries}\label{Section_Preliminaries}

Until we talk about configurations in Section \ref{Section_Configurations}, we will work in a relational language $L$.  Let $\sig(L)$ denote the set of non-logical symbols in $L$.  We will consider both $L$-structures $A$ and classes of finite $L$-structures $\KK$ that are closed under isomorphism.  If $A$ and $B$ are $L$-structures, let $\emb_L(A,B)$ denote the set of all $L$-embeddings from $A$ to $B$.  Let $\Aut_L(A)$ denote the set of all $L$-automorphisms of $A$.  We drop the $L$ on these if the language is understood.  If $A$ is an $L$-structure, we write $B \subseteq A$ to denote that $B$ is an $L$-substructure of $A$.  We will conflate an $L$-structure $A$ with its universe (instead of writing, for example, $|A|$).  If $L$ is a language, $M$ is an $L$-structure, and $A \subseteq M$, let $\Fml(L,A)$ denote the set of all $L$-formulas with parameters in $A$.

\begin{definition}[\fraisse~Classes]\label{Definition_Fraisse}
	Let $L$ be a relational language and let $\KK$ be a class of finite $L$-structures closed under isomorphism.
	\begin{enumerate}
		\item We say that $\KK$ has the \emph{hereditary property} if, for all $A \in \KK$ and $B \subseteq A$, $B \in \KK$.
		\item We say that $\KK$ has the \emph{joint embedding property} if, for all $A, B \in \KK$, there exists $C \in \KK$, $f \in \emb_L(A,C)$, and $g \in \emb_L(B,C)$.
		\item We say that $\KK$ has the \emph{amalgamation property} if, for all $A, B_0, B_1 \in \KK$, $f_0 \in \emb_L(A,B_0)$, and $f_1 \in \emb_L(A,B_1)$, there exists $C \in \KK$, $g_0 \in \emb_L(B_0, C)$, and $g_1 \in \emb_L(B_1, C)$ such that $g_0 \circ f_0 = g_1 \circ f_1$.  If, furthermore, we can choose this such that $g_0(B_0) \cap g_1(B_1) = g_0(f_0(A))$, we say $\KK$ has the \emph{strong amalgamation property} (or the \emph{disjoint amalgamation property}).
		\item We say that $\KK$ is a \emph{\fraisse~class} if it is non-empty, it has countably many isomorphism classes of structures, it has the hereditary property, the joint embedding property, and the amalgamation property.
	\end{enumerate}
\end{definition}

Throughout this paper, we will be interested primarily in classes that have at least the hereditary property.  We will see that it helps to have the joint embedding property for certain results to hold, but typically amalgamation is not necessary.

\begin{example}\label{Example_BasicExamples}
	Consider the following examples, which will be referenced throughout the paper, in their natural languages.
	\begin{enumerate}
		\item Let $\GG$ be the class of all finite graphs.  Then, $\GG$ is a \fraisse~class.
		\item For each $k \ge 2$, let $\HH_k$ be the class of all finite $k$-uniform hypergraphs.  Then, $\HH_k$ is a \fraisse~class.
		\item Let $\LO$ be the class of all finite linear orders.  Then, $\LO$ is a \fraisse~class.
		\item Let $\EE$ be the class of all finite sets equipped with a single equivalence relation.  Then, $\EE$ is a \fraisse~class.
		\item Let $\PG$ be the subclass of $\GG$ that contains only planar graphs (graphs that can be embedded into $\mathbb{R}^2$).  Then, $\PG$ has the hereditary property and the joint embedding property, but it does not have the amalgamation property.
		\item Let $\FF$ be the subclass of $\GG$ that contains graphs with no cycles (forests).  Then, $\FF$ has the hereditary property and the joint embedding property, but not the amalgamation property.
		\item Let $\PO$ be the class of all finite partial orders.  Then, $\PO$ is a \fraisse~class.
		\item Let $\TT$ be the class of all finite tournaments (directed graphs obtained by assigning a direction to each undirected edge of a complete graph).  Then, $\TT$ is a \fraisse~class.
	\end{enumerate}
\end{example}

\begin{proof}
    Most of this can be found in Chapter 7 of \cite{BigHodges}.  We prove the more obscure ones here.  For the proofs below, suppose $R$ is the unique (binary) relation symbol in the language.

    (5): We show that $\PG$ does not have the amalgamation property.	Define elements of $\PG$ with underlying universes $A = 4$, $B_0 = 6$, and $B_1 = 5$, where $A$ has no $R$-edges,
	\[
		B_0 \models \bigwedge_{i<3} R(i,4) \wedge R(i,5), \text{ and } B_1 \models \bigwedge_{i<4} R(i,4).
	\]
	Let $f_0$ and $f_1$ be the identity embeddings of $A$ into $B_0$ and $B_1$ respectively.  Then, it is clear that any completion of this amalgamation induces a $K_{3,3}$, which does not belong to $\PG$ \cite{Kur}.
	\begin{center}
		\begin{tikzpicture}
			\foreach \x in {2,3,4,5} {
				\draw (6,1) -- (\x,0);
				\filldraw (\x,0) circle (0.075);
			}
			\filldraw (6,1) circle (0.075);
			\foreach \x in {0,1} {
				\foreach \y in {2,3,4}
					\draw (\x,1) -- (\y,0);
				\filldraw (\x,1) circle (0.075);
			}
		\end{tikzpicture}
	\end{center}

    (6): We show that $\FF$ does not have the amalagamtion property.  Let $A = 2$, $B_0 = 3$, and $B_1 = 4$ be elements of $\FF$, where $A$ has no $R$-edges,
	\[
		B_0 \models R(0,2) \wedge R(2,1), \text{ and } B_1 \models R(0,2) \wedge R(2,3) \wedge R(3,1).
	\]
	Let $f_0$ and $f_1$ be the identity embeddings of $A$ into $B_0$ and $B_1$ respectively.  Then, it is clear that any completion of this amalgamation contains a $5$-cycle, which does not belong to $\FF$.
\end{proof}

Below we define the age of a structure and note how it interacts with the properties in Definition \ref{Definition_Fraisse}.

\begin{definition}[Age]
	Let $L$ be a relational language and let $M$ be an $L$-structure.  The \emph{age} of $M$, denoted $\age(M)$, is the class of all finite $L$-structures $A$ so that $\emb_L(A,M) \neq \emptyset$.
\end{definition}

\begin{theorem}[Theorem 7.1.1 of \cite{BigHodges}]
	Let $L$ be a relational language and let $\KK$ be a class of finite $L$-structures closed under isomorphism that has countably many isomorphism classes of structures.  Then, $\KK$ has the hereditary property and the joint embedding property if and only if there exists an $L$-structure $M$ such that $\KK = \age(M)$.
\end{theorem}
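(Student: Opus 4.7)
The statement is a classical result (Fra\"iss\'e/Hodges) and the plan is to prove the two directions separately, with the forward direction being essentially an observation and the backward direction being a chain construction.

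For the forward direction, assume $\KK = \age(M)$ for some $L$-structure $M$. Hereditary property is immediate: if $A \in \age(M)$ witnessed by an embedding $f : A \to M$, and $B \subseteq A$, then $f \restriction B$ is an embedding of $B$ into $M$, so $B \in \age(M) = \KK$. For joint embedding, given $A_0, A_1 \in \age(M)$ with embeddings $f_i : A_i \to M$, consider $C := M \restriction (f_0(A_0) \cup f_1(A_1))$; here I would emphasize that because $L$ is relational and each $A_i$ is finite, $C$ is a finite substructure of $M$, hence $C \in \age(M) = \KK$, and the $f_i$ (viewed with codomain $C$) provide the needed embeddings.

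For the backward direction, assume $\KK$ has HP and JEP and contains only countably many isomorphism types. First, I would fix an enumeration $(A_n : n < \omega)$ of representatives of the isomorphism classes of $\KK$, which exists by the countability hypothesis. The plan is then to build an $\omega$-chain of structures in $\KK$
\begin{equation*}
    M_0 \subseteq M_1 \subseteq M_2 \subseteq \cdots
\end{equation*}
as follows: set $M_0 := A_0$, and given $M_n \in \KK$, apply JEP to $M_n$ and $A_{n+1}$ to obtain $M_{n+1} \in \KK$ into which both $M_n$ and $A_{n+1}$ embed; up to replacing $M_{n+1}$ by an isomorphic copy (using closure under isomorphism), I may arrange that $M_n \subseteq M_{n+1}$ literally. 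Let $M := \bigcup_{n < \omega} M_n$, which is a well-defined $L$-structure since $L$ is relational and the inclusions are $L$-embeddings.

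The remaining task is to verify $\age(M) = \KK$. For the inclusion $\KK \subseteq \age(M)$, each $A_n$ embeds into $M_n \subseteq M$, so every isomorphism type in $\KK$ appears in $\age(M)$, and $\age(M)$ is closed under isomorphism. For the inclusion $\age(M) \subseteq \KK$, the key observation (and the only place where relationality and HP both enter) is that if $A$ is a finite structure with an embedding $g : A \to M$, then the finite set $g(A) \subseteq M$ is contained in some $M_n$ (since the $M_n$ are increasing and $g(A)$ is finite), and the substructure $M \restriction g(A)$ coincides with $M_n \restriction g(A)$ because $L$ is relational. Thus $g$ exhibits $A$ as isomorphic to a substructure of $M_n \in \KK$, and HP together with closure under isomorphism gives $A \in \KK$. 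I expect the only genuinely delicate point is this last step: one must be careful that passing from $M$ to $M_n$ does not alter the induced structure on $g(A)$, which is precisely where the relational hypothesis is used; everything else is bookkeeping on the chain construction.
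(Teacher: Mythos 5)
Your proof is correct and is exactly the standard chain argument that the paper defers to by citing Theorem 7.1.1 of Hodges (the paper supplies no proof of its own): restriction for HP, finite unions of images for JEP, and an $\omega$-chain built from JEP whose union realizes $\KK$ as an age, with the relational hypothesis used precisely where you flag it. The only caveat is that your construction (like Hodges's original statement) tacitly assumes $\KK$ is non-empty in order to set $M_0 := A_0$; the theorem as transcribed in the paper omits that hypothesis, under which the empty class would be a degenerate counterexample.
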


\begin{definition}[Homogeneity]
	Let $L$ be a relational language and let $M$ be an $L$-structure.  We say that $M$ is \emph{homogeneous} if, for all finite $A \subseteq M$ and for all $f \in \emb_L(A,M)$, there exists $g \in \Aut_L(M)$ extending $f$.  Note that this notion is sometimes called \emph{ultrahomogeneous}, and should not be confused with ``$\kappa$-\emph{homogeneous}'' as in, e.g., Definition 4.2.12 of \cite{Marker}.
\end{definition}

\begin{theorem}[\fraisse's Theorem, \cite{Fr54}]
	Let $L$ be a relational language and let $\KK$ be a class of finite $L$-structures closed under isomorphism.  Then, $\KK$ is a \fraisse~class if and only if there exists a unique (up to isomorphism) countable structure $M$ such that $M$ is homogeneous and $\KK = \age(M)$.
\end{theorem}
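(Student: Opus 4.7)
The plan is to prove the two directions separately and then establish uniqueness via a back-and-forth argument. The backward direction is routine: if $M$ is a countable homogeneous $L$-structure with $\age(M) = \KK$, then $\KK$ has countably many isomorphism classes (since $M$ has countably many finite subsets), the hereditary property is immediate, joint embedding follows by embedding any two $A, B \in \KK$ into $M$ and then taking a finite substructure of $M$ containing both images, and amalgamation follows by embedding $B_0$ into $M$, separately embedding $B_1$ into $M$, and using homogeneity to identify the two resulting copies of $A$ via an automorphism of $M$; the union of the images inside $M$ is then a finite amalgam in $\KK$.

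For the forward direction, I would construct the \fraisse~limit as the union of a chain $M_0 \subseteq M_1 \subseteq \cdots$ of structures in $\KK$. Enumerate a countable list of ``tasks'' of two kinds: (i) for each isomorphism class of $\KK$, ensure some representative embeds into some $M_n$; and (ii) for each finite $C \subseteq M_n$ appearing at some stage $n$ and each extension $C \hookrightarrow D$ with $D \in \KK$, ensure this extension is realized inside some later $M_m$. Since $\KK$ has countably many isomorphism classes (and each $M_n$ is finite, generating only countably many such pairs $(C,D)$), the total list of tasks is countable and may be dovetailed. At each step we use joint embedding to dispatch a type (i) task and amalgamation to dispatch a type (ii) task while staying inside $\KK$. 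Set $M = \bigcup_n M_n$. Then $\age(M) = \KK$ by the hereditary property together with type (i), and $M$ is homogeneous because any isomorphism between finite substructures of $M$ may be extended one element at a time using the extensions guaranteed by type (ii), via a back-and-forth.

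For uniqueness, suppose $M$ and $N$ are two countable homogeneous structures with $\age(M)=\age(N)=\KK$. Enumerate $M=\{m_i : i<\omega\}$ and $N=\{n_i : i<\omega\}$ and construct a chain of finite partial isomorphisms $g_0 \subseteq g_1 \subseteq \cdots$, extending the domain at even stages to include the next $m_i$ and the range at odd stages to include the next $n_i$. Each extension is possible because $\mathrm{dom}(g_k) \cup \{m\} \in \KK = \age(N)$, so this extension embeds into $N$, and homogeneity of $N$ lets us align the embedding with $g_k$; symmetrically on the other side. The union $g = \bigcup_k g_k$ is then the desired isomorphism. The main obstacle is the dovetailing bookkeeping in the forward construction: one must arrange that no task is postponed indefinitely and that each inductive step invokes exactly the right Fra\"{i}ss\'{e} property (HP to stay closed under substructure, JEP for type (i), AP for type (ii)) to produce $M_{n+1} \in \KK$.
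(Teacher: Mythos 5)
The paper does not prove this statement; it is quoted as a classical result with a citation to Fra\"{i}ss\'{e}'s 1954 paper (a textbook proof appears in Chapter 7 of Hodges). Your argument is the standard one --- backward direction by pulling HP, JEP, and AP out of homogeneity of $M$, forward direction by a dovetailed chain construction realizing the extension property, and uniqueness by back-and-forth --- and it is correct in all essentials; the only point worth tightening is that your type~(ii) tasks should be phrased so that they yield the full extension property (for every embedding $g\colon C\to M$ with $C\subseteq D$ in $\KK$, not just inclusions $C\subseteq M_n$), which is what the one-point extensions in your homogeneity and uniqueness arguments actually invoke, but this is routine bookkeeping since any such $g$ can be transported to an inclusion of its image.
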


When the conditions of \fraisse's Theorem are met, we call $M$ the \emph{\fraisse~limit} of $\KK$, and denote it by $M = \Flim(\KK)$.

\begin{example}
	The \fraisse~limit of $\GG$ is the random (Rado) graph.  The \fraisse~limit of $\LO$ is $(\mathbb{Q}; <)$.
\end{example}

\subsection{Partition Properties}

We repeat a definition from \cite{Ne05} followed by a related definition from \cite{ezs}.

\begin{definition}[Ramsey Property for Embeddings / Objects]\label{Definition_RP}
	For $L$-structures $A$ and $C$, let $\binom{C}{A}$ denote \emph{$A$-subobjects of $C$} where this can be defined either to be the set of embeddings of $A$ into $C$, $\emb_L(A,C)$, or to be $\emb_L(A,C)/\sim$, where for $f, g \in \emb_L(A,C)$, $f\sim g$ if and only if there exists $h \in \Aut_L(A)$ such that $g=f \circ h$.  In the following, we can identify the element $f \in \binom{C}{A}$, such that $f: A \rightarrow A'$ with its range, the structure $A'$.

	Given a fixed notion of subobject, we say that a class of finite $L$-structures $\KK$ has the \emph{Ramsey property} if for all $A, B \in \KK$ and for any integer $k \geq 2$, there exists $C \in \KK$ such that for any $c : \binom{C}{A} \rightarrow k$, there is a structure $B' \in \binom{C}{B}$ such that for any $A', A'' \in \binom{B'}{A}$, $c(A') = c(A'')$.  If we define $\binom{C}{A} :=\emb_L(A,C)$, then this property is referred to as the \emph{Ramsey property for embeddings}; if we define $\binom{C}{A}:=\emb_L(A,C)/\sim$, then this property is referred to as the \emph{Ramsey property for objects}.  In the case that all structures are rigid (i.e., have no non-trivial automorphisms), the two properties are equivalent.
\end{definition}

\begin{definition}[\AgeIndivisibility]\label{Definition_AgeIndivis}
	Let $L$ be a relational language and let $\KK$ be a class of finite $L$-structures closed under isomorphism.  We say that $\KK$ is \emph{\ageindivisible} if, for all $A \in \KK$ and $k \ge 2$, there exists $B \in \KK$ such that, for all $c : B \rightarrow k$, there exists $f \in \emb_L(A,B)$ such that $|c(f(A))| = 1$.
\end{definition}

\begin{example}
	The classes $\GG$, $\HH_k$ for $k \ge 2$, $\LO$, $\EE$, $\PO$, and $\TT$ are \ageindivisible.  However, $\PG$ is not \ageindivisible\ (because of the Four Color Theorem, for example) and $\FF$ is not \ageindivisible\ (by alternating colors).
\end{example}

When $\KK$ has only one singleton structure up to isomorphism, then the Ramsey property (for embeddings or for objects) implies indivisibility.

The notion we call ``indivisible'' here is called ``age indivisible'' in Definition 2.9 of \cite{GuiPar}.  We have changed the name of this property to align with other sources, such as \cite{ezs}, since an arbitrary class of finite structures is not necessarily the age of some structure.  In the setting where $\KK$ is the age of some countable structure, the notion of indivisibility from Definition \ref{Definition_AgeIndivis} is equivalent to the usual notion of \emph{age indivisibility} (for structures) in \cite{ezs}.

\begin{definition}[Age Indivisibility]
	Let $L$ be a relational language and $M$ be a countable $L$-structure.  We say $M$ is \emph{age indivisible} if, for all $k < \omega$, for all $c : M \rightarrow k$, there exists $i < k$ such that $\age(c^{-1}(\{ i \})) = \age(M)$.
\end{definition}

\begin{lemma}[Theorem 1 of \cite{ezs}]\label{Lemma_OldAgeIndivisible}
    Suppose that $L$ is a relational language and $M$ is a countable $L$-structure.  Then, $\age(M)$ is \ageindivisible\ if and only if $M$ is age indivisible.
\end{lemma}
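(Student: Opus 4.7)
The plan is to prove both directions by compactness arguments: the forward direction relies on the joint embedding property of $\age(M)$ to select a single color that works for every age element, while the backward direction uses König's lemma to assemble a bad coloring of $M$ from bad colorings of its finite substructures. Assume first that $\age(M)$ is \ageindivisible, and fix a coloring $c : M \to k$. For each $A \in \age(M)$, indivisibility supplies a witness $B \in \age(M)$ such that every $k$-coloring of $B$ admits a monochromatic embedded copy of $A$. Since $B$ embeds into $M$, composing with that embedding and applying indivisibility to the pulled-back coloring produces an embedding of $A$ into $c^{-1}(\{i\})$ for some $i < k$, so $S_A := \{ i < k : A \in \age(c^{-1}(\{i\})) \}$ is non-empty. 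Since $\age(M)$ has the joint embedding property (being an age), for any finite $A_1, \ldots, A_n \in \age(M)$ there is a common over-structure $A^* \in \age(M)$ into which each $A_j$ embeds, whence $S_{A^*} \subseteq \bigcap_j S_{A_j}$; thus the family $\{S_A\}$ has the finite intersection property. Because each $S_A$ lies in the finite set $\{0, \ldots, k-1\}$, this forces $\bigcap_{A \in \age(M)} S_A \ne \emptyset$, and any $i$ in the intersection witnesses $\age(c^{-1}(\{i\})) = \age(M)$.

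For the converse, assume $M$ is age indivisible and suppose toward a contradiction that $\age(M)$ is not \ageindivisible: there exist $A \in \age(M)$ and $k \ge 2$ such that for every $B \in \age(M)$ some coloring $c_B : B \to k$ admits no monochromatic embedded copy of $A$. Enumerate $M = \{m_0, m_1, \ldots\}$, let $M_n$ be the substructure on $\{m_0, \ldots, m_{n-1}\}$, and build the tree whose level-$n$ nodes are colorings $M_n \to k$ with no monochromatic embedded copy of $A$, ordered by restriction. This tree is finitely branching, and each level is non-empty because $c_{M_n}$ provides a valid node (note $M_n \in \age(M)$ since it is a finite substructure of $M$). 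König's lemma yields an infinite branch, hence a coloring $c : M \to k$ whose restriction to each $M_n$ avoids monochromatic copies of $A$; since any embedding of $A$ into $M$ has finite image contained in some $M_n$, the global coloring $c$ itself has no monochromatic copy of $A$. Therefore $A \notin \age(c^{-1}(\{i\}))$ for every $i < k$, contradicting age indivisibility of $M$.

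The main obstacle is the forward direction: indivisibility of $\age(M)$ only guarantees that each individual $A$ lands monochromatically in some fiber $c^{-1}(\{i\})$, but the color index is a priori allowed to depend on $A$. Coordinating these choices into a single color for all of $\age(M)$ is exactly where the joint embedding property of $\age(M)$ is used, and this is the reason the equivalence is phrased in terms of an age rather than an arbitrary hereditary class $\KK$. The backward direction is more routine once one observes that the property ``this coloring has no monochromatic copy of $A$'' is compactness-closed in the product topology on $k^M$.
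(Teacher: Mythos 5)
Your proof is correct. The paper does not prove this lemma itself --- it is quoted as Theorem~1 of the cited El-Zahar--Sauer paper --- and your argument is the standard one for this equivalence: in the forward direction, the sets $S_A$ of ``good colors'' for each $A$ are non-empty by pulling the coloring back along an embedding of the witness $B$ into $M$, and the joint embedding property of $\age(M)$ (which holds for any age) gives the finite intersection property inside the finite set $k$, so a single color works for all of $\age(M)$; in the converse direction, K\"onig's lemma (equivalently, compactness of $k^M$) assembles a globally bad coloring from the hypothesized bad colorings of the finite substructures $M_n$, using countability of $M$ exactly where the lemma's hypothesis requires it. Both halves check out, and your closing remarks correctly identify where JEP and countability are each used.
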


\begin{definition}[Indivisibility]
	Let $L$ be a relational language and let $M$ be an $L$-structure.  We say $M$ is \emph{indivisible} if, for all $k \ge 2$ and $c : M \rightarrow k$, there exists $N \subseteq M$ such that $|c(N)| = 1$ and $N$ is $L$-isomorphic to $M$.
\end{definition}

By Lemma \ref{Lemma_OldAgeIndivisible}, we see that the indivisibility of $M$ implies \ageindivisibility\ of $\age(M)$.

\begin{corollary}[Theorem 1 of \cite{ezs}]
	Let $L$ be a relational language and let $M$ be a countable $L$-structure.  If $M$ is indivisible, then $\age(M)$ is \ageindivisible.
\end{corollary}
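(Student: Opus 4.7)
The plan is to reduce the statement to Lemma \ref{Lemma_OldAgeIndivisible} by showing that indivisibility of $M$ implies age indivisibility of $M$, after which the conclusion that $\age(M)$ is \ageindivisible\ is immediate. Since this is essentially an unpacking of definitions, I do not expect any substantial obstacle; the main point is simply keeping straight the distinction between indivisibility (where the monochromatic copy must be a substructure isomorphic to $M$ itself) and age indivisibility (where one piece of the partition need only contain copies of every finite structure in $\age(M)$).

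First, I would assume $M$ is indivisible and fix an arbitrary $k < \omega$ and coloring $c : M \rightarrow k$. If $k < 2$ there is nothing to show, so assume $k \geq 2$. By indivisibility of $M$, there exists $N \subseteq M$ with $|c(N)| = 1$ and $N$ $L$-isomorphic to $M$. Let $i < k$ be the unique color with $c(N) = \{i\}$, so $N \subseteq c^{-1}(\{i\}) \subseteq M$.

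Next I would use the chain of inclusions of substructures to obtain a chain of inclusions of ages:
\[
\age(N) \subseteq \age(c^{-1}(\{i\})) \subseteq \age(M).
\]
Since $N \cong M$ we have $\age(N) = \age(M)$, which forces equality throughout; in particular $\age(c^{-1}(\{i\})) = \age(M)$. As $c$ and $k$ were arbitrary, this shows $M$ is age indivisible.

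Finally, applying Lemma \ref{Lemma_OldAgeIndivisible} converts age indivisibility of the countable structure $M$ into \ageindivisibility\ of the class $\age(M)$, which is the desired conclusion.
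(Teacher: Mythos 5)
Your proposal is correct and matches the paper's intended argument: the paper gives no explicit proof, simply noting that the corollary follows "by Lemma \ref{Lemma_OldAgeIndivisible}," and your write-up supplies exactly the missing easy step (a monochromatic copy $N \cong M$ inside a color class forces $\age(c^{-1}(\{i\})) = \age(M)$) before invoking that lemma.
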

\
The converse does not hold (i.e., if $\age(M)$ is indivisible, it does not follow that $M$ is indivisible).

\begin{example}\label{Example_AgeIndNotInd}
	Let $L$ be the language with two binary relation symbols, $E_0$ and $E_1$, and let $M$ be the $L$-structure with universe $\omega \times \omega$ where, for all $a, b, c, d < \omega$,
	\begin{align*}
		M \models E_0((a,b), (c,d)) & \Longleftrightarrow a = c, \text{ and} \\
		M \models E_1((a,b), (c,d)) & \Longleftrightarrow b = d.
	\end{align*}
	Then, it is not hard to check that $\age(M)$ is \ageindivisible\ (see, for example, Theorem \ref{Theorem_DirectAgeInd}).  However, $M$ is not indivisible.  For example, consider the coloring $c : M \rightarrow 2$ given by
	\[
		c(a,b) = \begin{cases} 1 & \text{ if } a < b \\ 0 & \text{ if } a \ge b \end{cases}.
	\]
	If $N \subseteq M$ and $N \cong M$, fix $(a,b) \in N$.  If $a < b$, then there exist only finitely many $d \in \omega$ with $c(d,b) = c(a,b) = 1$ and, similarly, if $a \ge b$, then there exist only finitely many $d \in \omega$ with $c(a,d) = c(a,b) = 0$.  Thus, $|c(N)| > 1$.
\end{example}

These notions are related to the notion of definable self-similarity from \cite{GuiPar}. 
 Let $L$ be a relational language, $A$ an $L$-structure, $A_0 \subseteq A$, and $a, a' \in A$.  Write $a \qfequiv{A_0} a'$ if the function sending $a$ to $a'$ and fixing $A_0$ is an $L$-isomorphism from $\{ a \} \cup A_0$ to $\{ a' \} \cup A_0$ (i.e., $a$ and $a'$ have the same quanitifier-free $L$-type over $A_0$).  This is clearly an equivalence relation.  Write $\qfclass{a}{A}{A_0}$ to denote the $\qfequiv{A_0}$-class of $a$ in $A$.  Since $L$ is relational, $\qfclass{a}{A}{A_0}$ is a substructure of $A$.

\begin{definition}[Definably Self-Similar]\label{Definition_DefinablySelfSim}
	Let $L$ be a relational language and let $\KK$ be a class of finite $L$-structures closed under isomorphism.  We say $\KK$ is \emph{definably self-similar} if, for all $A, B, C \in \KK$, for all $f \in \emb_L(A,B)$, for all $C_0 \subseteq C$, for all $c \in C \setminus C_0$, and for all $g \in \emb_L(A, \qfclass{c}{C}{C_0})$, there exists $D \in \KK$, $j \in \emb_L(C,D)$, and $h \in \emb_L(B, \qfclass{j(c)}{D}{j(C_0)})$ such that $h \circ f = j \circ g$.
\end{definition}

The definition of definably self-similar given here differs from the original definition in \cite{GuiPar}, but it is equivalent for \fraisse~classes via Lemma 3.14 of \cite{GuiPar}.  We use this version of the definition here because it is better suited for classes of structures.

\begin{lemma}[A.4.9 of \cite{FrSa00}, Lemma 3.15 of \cite{GuiPar}]\label{Lemma_DefSelfSimAgeInd}
	Let $L$ be a relational language and let $\KK$ be a class of finite $L$-structures closed under isomorphism.  If $\KK$ is a \fraisse~class and $\KK$ is definably self-similar, then $\Flim(\KK)$ is indivisible.  Therefore, $\KK$ is \ageindivisible.
\end{lemma}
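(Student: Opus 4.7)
The plan is to first show that $M := \Flim(\KK)$ is indivisible; the concluding statement that $\KK$ is \ageindivisible\ then follows from Lemma \ref{Lemma_OldAgeIndivisible} since indivisibility of $M$ trivially implies age indivisibility of $M$. The heart of the argument is to promote definable self-similarity of $\KK$ to the following structural fact about $M$: for every finite $C_0 \subseteq M$ and every $a \in M \setminus C_0$, the induced $L$-substructure on $\qfclass{a}{M}{C_0}$ is isomorphic to $M$. Granted this, a back-and-forth argument combined with induction on the number of colors delivers indivisibility.

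For the structural fact, fix such $C_0$ and $a$, and let $N$ denote $\qfclass{a}{M}{C_0}$ viewed as an induced $L$-substructure of $M$. Since $N$ is countable with $\age(N) \subseteq \KK$, by the uniqueness part of \fraisse's Theorem it suffices to show that $\age(N) = \KK$ and that $N$ satisfies the extension property for $\KK$. Both reduce to the same template: given $A \subseteq B$ in $\KK$ with inclusion $f \colon A \hookrightarrow B$, and any $g \in \emb(A, N)$, apply Definition \ref{Definition_DefinablySelfSim} with $C$ chosen to be the substructure of $M$ on $C_0 \cup g(A) \cup \{a\}$, noting that $g(A) \subseteq \qfclass{a}{C}{C_0}$ because quantifier-free types are preserved under substructures in a relational language. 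This produces $D \in \KK$, $j \in \emb(C, D)$, and $h \in \emb(B, \qfclass{j(a)}{D}{j(C_0)})$ with $h \circ f = j \circ g$. Using universality of $M$ to embed $D$ into $M$ and homogeneity of $M$ to align the image of $C$ with its original copy, we obtain $e \in \emb(D, M)$ with $e \circ j = \mathrm{id}_C$; since $e$ sends $\qfclass{j(a)}{D}{j(C_0)}$ into $\qfclass{a}{M}{C_0} = N$, the composition $e \circ h$ lies in $N$ and extends $g$. This establishes the extension property; taking $A$ a singleton substructure and using joint embedding to augment $B$ shows $\age(N) = \KK$; and \fraisse's Theorem now identifies $N \cong M$.

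For indivisibility itself, I induct on the number of colors $k \geq 1$, with $k = 1$ trivial. Given $c \colon M \to k$ with $k \geq 2$, pick any $a_0 \in M$, set $i := c(a_0)$, and enumerate $M = \{a_n : n < \omega\}$. I attempt to construct an $L$-embedding $\sigma \colon M \hookrightarrow M$ with $\sigma(M) \subseteq c^{-1}(i)$ by extending partial embeddings $\sigma_n \colon \{a_0, \ldots, a_{n-1}\} \to M$ one element at a time. At stage $n$, let $p$ be the quantifier-free type of $a_n$ over $\{a_0, \ldots, a_{n-1}\}$ transported through $\sigma_n$, and let $S$ be its realization set in $M$; by the structural fact, $S \cong M$. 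If $S \cap c^{-1}(i) \neq \emptyset$, pick any element there as $\sigma_{n+1}(a_n)$ and proceed. Otherwise $c$ restricted to $S$ uses at most $k - 1$ colors, so the inductive hypothesis applied to $S \cong M$ produces a monochromatic copy of $M$ inside $S \subseteq M$ and we are done. If the first case always applies, $\sigma(M) \cong M$ is a monochromatic copy of $M$ of color $i$.

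The main obstacle will be the structural fact: unwinding Definition \ref{Definition_DefinablySelfSim} inside $M$ and, in particular, realizing the abstract extension $D$ as a substructure of $M$ sitting correctly over $C$ via the universality-and-homogeneity transport. Once that identification is made, the back-and-forth dichotomy and the passage from indivisibility of $M$ to \ageindivisibility\ of $\KK$ are formal.
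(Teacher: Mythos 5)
The paper gives no proof of this lemma itself---it is quoted from A.4.9 of \cite{FrSa00} and Lemma 3.15 of \cite{GuiPar}---and your argument is a correct reconstruction of exactly that cited route: definable self-similarity, unwound inside $M=\Flim(\KK)$ via universality and homogeneity, shows each $\qfclass{a}{M}{C_0}$ has the extension property and age $\KK$, hence is a copy of $M$, and the induction on the number of colors then yields indivisibility, with \ageindivisibility\ of $\KK$ following from Lemma \ref{Lemma_OldAgeIndivisible}. The only points worth making explicit are that the realization set $S$ at each stage is automatically disjoint from the finite parameter set (an element of $C_0$ can never lie in $\qfclass{a}{M}{C_0}$ for $a\notin C_0$, so injectivity of the partial embeddings is not an issue) and that identifying $N$ with $\Flim(\KK)$ uses the standard ``countable, age $\KK$, extension property'' characterization rather than the uniqueness clause verbatim; neither affects correctness.
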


%

\subsection{Disjoint $n$-Amalgamation}

We can generalize the strong amalgamation property to higher dimensions, following Section 3 of \cite{Kru}, suitably modified for classes of structures.  For $n \ge 2$, let $\Pow(n)$ be the power set of $n$, which is a partially ordered set under inclusion.

\begin{definition}
	Let $n \ge 2$, let $P \subseteq \Pow(n)$ closed under intersections, let $L$ be a relational language, and let $\KK$ be a class of finite $L$-structures closed under isomorphism.  A \emph{disjoint $P$-amalgamation system in $\KK$} is
	\begin{itemize}
		\item $A_p \in \KK$ for all $p \in P$, and
		\item $f_{p,q} \in \emb(A_p, A_q)$ for all $p, q \in P$ with $p \subseteq q$
	\end{itemize}
	such that
	\begin{itemize}
		\item (identity) for all $p \in P$, $f_{p,p}$ is the identity embedding,
		\item (commutivity) for all $p, q, r \in P$, if $p \subseteq q \subseteq r$, then
			\[
				f_{p,r} = f_{q,r} \circ f_{p,q},
			\]
			and
		\item (disjointness) for all $p, q, r \in P$ with $p, q \subseteq r$,
			\[
				f_{p,r}(A_p) \cap f_{q,r}(A_q) = f_{p \cap q,r}( A_{p \cap q} ).
			\]
	\end{itemize}
\end{definition}

If $P' \subseteq P \subseteq \Pow(n)$ and $\mathcal{P} = ((A_p)_{p \in P}, (f_{p,q})_{p,q \in P, p \subseteq q})$ is a disjoint $P$-amalgamation system in $\KK$, then $\mathcal{P}' = ((A_p)_{p \in P'}, (f_{p,q})_{p,q \in P', p \subseteq q})$ is clearly a disjoint $P'$-amalgamation system in $\KK$.  We say that $\mathcal{P}$ \emph{extends} $\mathcal{P}'$.

\begin{definition}\label{Definition_DisjNAmalg}
	Let $L$ be a relational language and let $\KK$ be a class of finite $L$-structures closed under isomorphism.  For $n \ge 2$, we say that $\KK$ has the \emph{disjoint $n$-amalgamation property} if every disjoint ($\Pow(n) \setminus \{ n \}$)-amalgamation system in $\KK$ can be extended to a disjoint $\Pow(n)$-amalgamation system.
\end{definition}

The following is the commuting diagram for disjoint $3$-amalgamation:
\begin{center}
	\begin{tikzpicture}
		\draw (0,0) node {$A_\emptyset$};
		\foreach \x in {0,1,2} {
			\draw ({2*(\x-1)},1) node {$A_{\{ \x \}}$};
			\draw[->] ({0.25*(\x-1)},0.25) -- ({1.75*(\x-1)},0.75);
		}
		\draw (-2,2) node {$A_{\{0,1\}}$};
		\draw (0,2) node {$A_{\{0,2\}}$};
		\draw (2,2) node {$A_{\{1,2\}}$};
		\foreach \x in {-1,1} {
			\draw[->] ({\x*2},1.25) -- ({\x*2},1.75);
			\draw[->] ({\x*0.25},1.25) -- ({\x*1.75},1.75);
			\draw[->] ({\x*1.75},1.25) -- ({\x*0.25},1.75);
		}
		\foreach \x in {-1,0,1}
			\draw[color = gray, dashed, ->] ({1.75*\x},2.25) -- ({0.25*\x},2.75);
		\draw[color = gray] (0,3) node {$A_3$};
	\end{tikzpicture}
\end{center}

Note that disjoint $2$-amalgamation is equivalent to strong amalgamation.  See Example 3.3 of \cite{Kru} for an explanation of the connection between the definition given in this paper and the one from \cite{Kru}.

\begin{remark}
	If $\KK$ has the disjoint $n$-amalgamation property for some fixed $n \ge 2$, then it has the disjoint $m$-amalgamation property for all $2 \le m \le n$.  This is because we can convert a disjoint $\Pow(m)$-amalgamation system into a disjoint $\Pow(n)$-amalgamation system by adding trivial inclusions.
\end{remark}

\begin{lemma}\label{Lemma_TransitiveNo3Amalg}
	Let $L$ be a relational language with a binary relation symbol, $E$.  Let $\KK$ be a class of finite $L$-structures such that
    \begin{enumerate}
        \item $\KK$ is closed under isomorphism,
        \item $\KK$ has the hereditary property,
        \item there exists only one singleton up to isomorphism in $\KK$,
        \item for all $A \in \KK$, $E$ is transitive on $A$, and
        \item there exists $A_0, A_1 \in \KK$, each with universe $\{0, 1\}$, such that $A_0 \models E(0,1)$ and $A_1 \models \neg E(0,1)$.
    \end{enumerate}
    Then, $\KK$ does not have the disjoint $3$-amalgamation property.
\end{lemma}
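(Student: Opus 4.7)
The plan is to exhibit an explicit disjoint $(\Pow(3) \setminus \{ 3 \})$-amalgamation system in $\KK$ that any top-level extension would force to violate transitivity of $E$. First I take $A_\emptyset$ to be the empty $L$-structure, which lies in $\KK$ by applying the hereditary property to any $A \in \KK$ (using that $\KK$ is non-empty by hypothesis (5)). By hypothesis (3) there is a unique singleton $p_0$ in $\KK$ up to isomorphism, so I let $A_{\{0\}}, A_{\{1\}}, A_{\{2\}}$ be pairwise-disjoint copies of $p_0$ with underlying elements $a_0, a_1, a_2$, and define $f_{\emptyset, \{i\}}$ to be the empty embedding and $f_{\{i\}, \{i\}}$ the identity.

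Next I use hypothesis (5) to populate the pair levels asymmetrically: $A_{\{0,1\}}$ and $A_{\{1,2\}}$ are copies of $A_0$ realized on $\{a_0, a_1\}$ and $\{a_1, a_2\}$ respectively (so $A_{\{0,1\}} \models E(a_0, a_1)$ and $A_{\{1,2\}} \models E(a_1, a_2)$), while $A_{\{0,2\}}$ is a copy of $A_1$ realized on $\{a_0, a_2\}$ (so $A_{\{0,2\}} \models \neg E(a_0, a_2)$). The embeddings $f_{\{i\}, \{i,j\}}$ are the maps $a_i \mapsto a_i$. Commutativity reduces to the trivial factorization through $A_\emptyset = \emptyset$, and disjointness at every level reduces to the pairwise disjointness of the singletons $\{a_0\}, \{a_1\}, \{a_2\}$. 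So this is a legitimate disjoint $(\Pow(3) \setminus \{ 3 \})$-amalgamation system in $\KK$.

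Finally, I would argue that this system admits no extension to $\Pow(3)$. Suppose for contradiction that $A_3 \in \KK$ together with embeddings $(f_{p,3})_{p \subsetneq 3}$ forms such an extension. Using commutativity I identify each $a_i$ with its image in $A_3$. The disjointness requirement $f_{\{i,j\}, 3}(A_{\{i,j\}}) \cap f_{\{i,k\}, 3}(A_{\{i,k\}}) = f_{\{i\}, 3}(A_{\{i\}}) = \{a_i\}$ then pins down the restriction of $A_3$ to $\{a_0, a_1, a_2\}$: the quantifier-free $L$-type of each pair is inherited from the corresponding $A_{\{i,j\}}$, so $A_3 \models E(a_0, a_1) \wedge E(a_1, a_2) \wedge \neg E(a_0, a_2)$. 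By the hereditary property (hypothesis (2)), $\{a_0, a_1, a_2\}$ is a substructure of $A_3$ lying in $\KK$, which contradicts hypothesis (4) since $E$ is manifestly not transitive on it.

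The only subtle step is reading off the disjointness condition to conclude that the three pair-images cover exactly $\{a_0, a_1, a_2\}$ and that each pairwise $E$-type is inherited from the relevant $A_{\{i,j\}}$; I do not expect any serious obstacle beyond being careful with the identifications.
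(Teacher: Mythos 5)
Your proposal is correct and follows essentially the same route as the paper: both exhibit the base system with $E(a_0,a_1)$, $E(a_1,a_2)$, $\neg E(a_0,a_2)$ on the pair levels and derive a contradiction with transitivity in any putative $A_3$. (Your detour through the hereditary property at the end is unnecessary, since hypothesis (4) already makes $E$ transitive on $A_3$ itself, but this does no harm.)
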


\begin{proof}
	For each $X \subset 3$, let $A_X$ be an $L$-structure with universe $X$ and suppose that
	\[
		A_{\{0,1\}} \models E(0,1), \ A_{\{1,2\}} \models E(1,2), \text{ and } A_{\{0,2\}} \models \neg E(0,2).
	\]
	Let $f_{\{i\},\{i,j\}}(i) = i$ for all $i, j < 3$.  One can check that this is a disjoint $(\Pow(3) \setminus \{ 3 \})$-amalgamation system in $\KK$.  Then, if there were an extension to a disjoint $\Pow(3)$-amalgamation system in $\KK$, then $A_3$ would contain $a_0, a_1, a_2$ with
	\[
		A_3 \models E(a_0, a_1) \wedge E(a_1, a_2) \wedge \neg E(a_0, a_2),
	\]
	contrary to the fact that $E$ is transitive on $A_3$.  Thus, no such system exists, and $\KK$ does not have the disjoint $3$-amalgamation property.
\end{proof}

\begin{example}
	For all $k \ge 2$ and all $n \ge 2$, $\HH_k$ has the disjoint $n$-amalgamation property (see Example 3.7 of \cite{Kru}).  Moreover, $\TT$ has the disjoint $n$-amalgamation property for all $n \ge 2$ (clear).  The classes $\LO$, $\EE$, and $\PO$ have the disjoint $2$-amalgamation property (i.e., strong amalgamation), but, by the previous lemma, these classes do not have the disjoint $n$-amalgamation property for $n \ge 3$.
\end{example}

Under mild assumptions, having the disjoint $3$-amalgamation property implies being definably self-similar.  The authors were made aware through personal communication that R. Patel has proved stronger partition results that imply Proposition \ref{Proposition_DisAmalgDefSelfSim} in the case that $\KK$ is the age of some structure.

\begin{proposition}\label{Proposition_DisAmalgDefSelfSim}
	Let $L$ be a relational language and let $\KK$ be a class of finite $L$-structures closed under isomorphism.  If $\KK$ has exactly one singleton structure up to isomorphism, $\KK$ has the hereditary property, and $\KK$ has the disjoint $3$-amalgamation property, then $\KK$ is definably self-similar.
\end{proposition}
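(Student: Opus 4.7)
The plan is to proceed by induction on $|B \setminus f(A)|$, using disjoint $3$-amalgamation in the inductive step to attach one new element at a time. Without loss of generality, identify $A$ with $f(A) \subseteq B$ so that $f$ is the inclusion. The base case $|B \setminus A| = 0$ is trivial: take $D := C$, $j := \mathrm{id}_C$, and $h := g$.

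For the inductive step, choose $b \in B \setminus A$ and set $B_0 := B \setminus \{b\}$, which is a substructure of $B$ by hereditary and contains $A$. Applying the inductive hypothesis to $B_0$ in place of $B$ yields $D_0 \in \KK$, $j_0 \in \emb(C, D_0)$, and $h_0 \in \emb(B_0, \qfclass{j_0(c)}{D_0}{j_0(C_0)})$ with $h_0|_A = j_0 \circ g$. After identifying $C$ with $j_0(C) \subseteq D_0$ so that $j_0$ becomes the inclusion, $h_0(B_0) \subseteq \qfclass{c}{D_0}{C_0}$; in particular $h_0(B_0) \cap C_0 = \emptyset$, since $c \notin C_0$.

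Next, build a disjoint $(\Pow(3) \setminus \{3\})$-amalgamation system whose $3$-amalgamation attaches a fresh element $b^*$ to $D_0$, intended as the image $h(b)$. Set $A_\emptyset := \emptyset$, $A_{\{0\}} := \{b^*\}$ (the unique singleton of $\KK$), $A_{\{1\}} := h_0(B_0)$, $A_{\{2\}} := C_0$, $A_{\{1,2\}} := D_0$, $A_{\{0,1\}} := h_0(B_0) \cup \{b^*\}$ carrying the structure transported from $B$ along $h_0 \cup (b \leftrightarrow b^*)$, and $A_{\{0,2\}} := C_0 \cup \{b^*\}$ carrying the structure transported from $C_0 \cup \{c\}$ along $c \leftrightarrow b^*$; all embeddings are the evident inclusions. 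Each $A_p$ lies in $\KK$ by hereditary, by isomorphism, and (for $A_{\{0\}}$) by the unique-singleton hypothesis; commutativity is immediate; and the disjointness conditions reduce to $h_0(B_0) \cap C_0 = \emptyset$ and $b^* \notin D_0$. The unique-singleton hypothesis is essential to make the singleton structure at $b^*$ viewed from $A_{\{0,1\}}$ (inherited from $b \in B$) agree with the one viewed from $A_{\{0,2\}}$ (inherited from $c \in C$).

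Disjoint $3$-amalgamation produces $A_3 \in \KK$ extending the system; let $D$ be the substructure of $A_3$ whose universe is the union of the images of $A_{\{1,2\}}$ and $A_{\{0\}}$, which belongs to $\KK$ by hereditary. Disjointness of the $3$-amalgamation identifies the copies of $h_0(B_0)$ and of $C_0$ coming from different pieces, so that inside $D$ one has $h_0(B_0) \cup \{b^*\} \cong B$ (via $h_0$ extended by $b \mapsto b^*$) and $C_0 \cup \{b^*\} \cong C_0 \cup \{c\}$. Setting $h := h_0 \cup (b \mapsto b^*)$ and $j : C \hookrightarrow D$ the inclusion then gives $h \circ f = j \circ g$ and $h(B) \subseteq \qfclass{j(c)}{D}{j(C_0)}$, completing the induction. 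The main obstacle, beyond the careful bookkeeping, is verifying that the three designated pairwise pieces really are amalgamation-compatible; this is where the unique-singleton hypothesis and the disjointness $h_0(B_0) \cap C_0 = \emptyset$ are used essentially, together with the mild point that $\emptyset \in \KK$ (from hereditary applied to the singleton) is needed for $A_\emptyset$.
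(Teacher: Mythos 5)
Your proof is correct and is essentially the same argument as the paper's: the paper reduces to the case $|B| = |A| + 1$ (your induction made implicit) and then builds the same three-part disjoint $(\Pow(3)\setminus\{3\})$-amalgamation system, with coordinates given by the old points of $B$, the single new point, and $C_0$, using the unique-singleton hypothesis in exactly the place you identify. The only cosmetic difference is that you amalgamate over the previously built $D_0$ and over $C_0 \cup \{c\}$ where the paper uses $C$ in both slots.
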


\begin{proof}
	Suppose that $A, B, C \in \KK$, $f \in \emb(A,B)$, $C_0 \subseteq C$, $c \in C \setminus C_0$, and $g \in \emb(A, \qfclass{c}{C}{C_0})$.  Without loss of generality, we may assume $|B| = |A|+1$.  Let $A_\emptyset = \emptyset$, $A_{\{0\}} = A$, $A_{\{1\}}$ be a singleton, and $A_{\{2\}} = C_0$.  Let $A_{\{0,1\}} = B$ and $A_{\{0,2\}} = A_{\{1,2\}} = C$.  Let $f_{\{2\},\{0,2\}}$, and $f_{\{2\},\{1,2\}}$ be the identity embeddings, let $f_{\{0\}, \{0,1\}} = f$, let $f_{\{0\},\{0,2\}} = g$, let $f_{\{1\}, \{0,1\}}$ be any embedding of $A_{\{1\}}$ into $B \setminus g(A)$, and let $f_{\{1\}, \{1,2\}}$ be any embedding of $A_{\{1\}}$ into $\qfclass{c}{C}{C_0}$ (these exist since $\KK$ contains exactly one singleton up to isomorphism).  It is easy to check that this is a disjoint ($\Pow(3) \setminus \{ 3 \}$)-amalgamation system.  Since $\KK$ has disjoint $3$-amalgamation, this extends to a disjoint $\Pow(3)$-amalgamation system.  Then, let $D = A_3$, let $j = f_{\{0,2\},3}$, and let $h = f_{\{0,1\},3}$.  Clearly $h(b) \in \qfclass{j(c)}{D}{j(C_0)}$ and $h \circ f = j \circ g$.  Therefore, $\KK$ is definably self-similar.

	\begin{center}
		\begin{tikzpicture}
			\draw (0,0) node {$\emptyset$};
			\draw (-2,1) node {$A$};
			\draw (0,1) node {$A_{\{1\}}$};
			\draw (2,1) node {$C_0$};
			\foreach \x in {0,1,2} {
				\draw[->] ({0.25*(\x-1)},0.25) -- ({1.75*(\x-1)},0.75);
			}
			\draw (-2,2) node {$B$};
			\draw (0,2) node {$C$};
			\draw (2,2) node {$C$};
			\foreach \x in {-1,1} {
				\draw[->] ({\x*2},1.25) -- ({\x*2},1.75);
				\draw[->] ({\x*0.25},1.25) -- ({\x*1.75},1.75);
				\draw[->] ({\x*1.75},1.25) -- ({\x*0.25},1.75);
			}
			\draw (-2,1.5) node[anchor = east] {$f$};
			\draw (-0.75,1.6) node[anchor = south] {$g$};
			\foreach \x in {-1,0,1}
				\draw[color = gray, dashed, ->] ({1.75*\x},2.25) -- ({0.25*\x},2.75);
			\draw[color = gray] (0,2.4) node[anchor = west] {$j$};
			\draw[color = gray] (-1,2.5) node[anchor = south] {$h$};
			\draw[color = gray] (0,3) node {$D$};
		\end{tikzpicture}
	\end{center}
\end{proof}

\begin{example}
	The converse of Proposition \ref{Proposition_DisAmalgDefSelfSim} is in general false.  For example, consider $\LO$ the class of all finite linear orders.  There is only one singleton up to isomorphism and $\LO$ is definably self-similar.  However, $\LO$ does not have the disjoint $3$-amalgamation property.
\end{example}

\begin{example}
	The assumption of having exactly one singleton structure in $\KK$ up to isomorphism in Proposition \ref{Proposition_DisAmalgDefSelfSim} is necessary.  For example, let $L$ be the language with a single unary predicate $P$ and let $\KK$ be the class of all finite $L$-structures.  Then, $\KK$ has the disjoint $3$-amalgamation property (and in fact the disjoint $n$-amalgamation property for all $n \ge 2$), but $\KK$ is not definably self-similar.
	\if\papermode0
		Let $p(x)$ be the type $P(x)$ over $\emptyset$, let $A$ be a singleton structure with a realization of $P$, and let $B$ be a two element structure with one realization of $P$ and one realization of $\neg P$.  Then, though there is an embedding of $A$ into the realizations of $p$, there is no embedding of $B$.
	\fi
\end{example}

\subsection{Products of Structures}\label{Subsection_Structures}

In this subsection, we will define various products of structures.  To begin, we need two relational languages, $L_0$ and $L_1$.

\begin{definition}[Lexicographic Product]\label{Definition_Wreath}
	Let $L_0$ and $L_1$ be relational languages, let $B$ be an $L_1$-structure, and, for each $b \in B$, let $A_b$ be an $L_0$-structure.  Let $L_2$ be the language whose signature is the disjoint union of the signatures of $L_0$ and $L_1$ together with a new binary relation symbol, $E$.  Then, the \emph{\wreathproduct\ of $A_b$ along $B$}, denoted $\bigsqcup_{b \in B} A_b$, is the $L_2$-structure with universe
	\[
		\{ (a,b) : b \in B, a \in A_b \}
	\]
	such that, for all $R \in \sig(L_2)$ of arity $n$, for all $b_0, \dots, b_{n-1} \in B$, for all $a_0 \in A_{b_0}, \dots, a_{n-1} \in A_{b_{n-1}}$,
	\[
		\bigsqcup_{b \in B} A_b \models R((a_0, b_0), \dots, (a_{n-1}, b_{n-1}))
	\]
	if and only if
	\begin{enumerate}
		\item if $R \in \sig(L_0)$, $b_0 = \dots = b_{n-1}$ and $A_{b_0} \models R(a_0, \dots, a_{n-1})$; and 
		\item if $R \in \sig(L_1)$, $B \models R(b_0, \dots, b_{n-1})$; and
		\item if $R = E$ (and $n = 2$), $b_0 = b_1$.
	\end{enumerate}
	If $A_b = A$ for all $b \in B$, then we let
	\[
		A \wreath B = \bigsqcup_{b \in B} A_b.
	\]
	This is called the \emph{\wreathproduct\ of $A$ and $B$}.
\end{definition}

\begin{remark}\label{Remark_WreathHKO}
    The definition of lexicographic product here is essentially Definition 2.1 of \cite{Tou}, which is based on Definition 1.12 of \cite{Meir16} and Definition 1.1 of \cite{Meir22}.  What we call $\bigsqcup_{b \in B} A_b$ is interdefinable with $B[\{ A_b : b \in B \}^s]^{\mathbb{U}}$ from \cite{Tou} (there $A_b$ and $B$ are in the same language, but this is a minor obstruction).  Both of these are technically different from $B[A_b]^s_{b \in B}$ in \cites{Meir16, Meir22}; for example, if $b \in B$, and $a_0, a_1 \in A_b$ are such that $B \models E(b,b)$ and $A \not\models E(a_0, a_1)$, then $B \models E(b,b)$ but $B[A_b]^s_{b \in B} \not\models E((a_0,b), (a_1,b))$.  Note that this also comes up, for example, in \cite{HKO}, where it is called ``composition.''
\end{remark}

\begin{remark}\label{Remark_ScowSokic} In Definition 5.6 of \cite{sc21}, the third author defines ``semi-direct product structures'' according to the group construction in \cite{kpt05}.  In summary, given relational signatures $L_1,L_2$ that intersect on an ordering $\{\prec\}$, an $L_2$-structure $\N$ ordered by $\prec$ and $L_1$-structures $(\M_i : i \in \N)$ ordered by $\prec$, we create a convex ordering on $\I_{i \in \N}(\M_i)$, where each $\M_i$ is defined to be an equivalence class under some new equivalence relation $E$.  In Theorem 5.13 of \cite{sc21} it is proved that if $\age(\N)$ has the Ramsey property and there is a class $\Kp$ such that $\age(\M_i) = \Kp$ for all $i \in \N$ such that $\Kp$ has the Ramsey property, then the age of $\I_{i \in \N}(\M_i)$ has the Ramsey property.  

Up to some decisions about where the ordering lies in the languages, the age of $\I_{i \in \N}(\M_i)$ is exactly $\C[\LL^*]\E[\Kp^*]$ as in the work of M.~Soki\'{c}, and Theorem 4.4 of \cite{sokQuotients} proves that if $\LL^*$ has JEP, then $\Kp^*$ and $\LL^*$ have the Ramsey property if and only if $\C[\LL^*]\E[\Kp^*]$ has the Ramsey property.

Since $\Kp$, $\age(\N)$ are assumed to be ordered Ramsey classes, they must have the amalgamation property, and so the results from \cite{kpt05} were known to provide an alternate proof of the transfer of the Ramsey property to the semi-direct product structure.
However, the third author was unaware of the finitary argument in \cite{sokQuotients} at the time of publication of \cite{sc21}, so we mention it here.
\end{remark}

The notation in Definition \ref{Definition_Wreath} is used because of the relationship between the automorphism group of $A \wreath B$ and the automorphism groups of $A$ and $B$.

Let $G$ and $H$ be groups and let $G$ act on a set $B$.  The \emph{wreath product} of $H$ and $G$, denoted $H \wreath G$, is the group with underlying set $\left( \prod_B H \right) \times G$ and group operation given by, for all $g, g' \in G$ and $h_b, h'_b \in H$ for $b \in B$,
\[
	((h_b)_{b \in B}, g) \cdot ((h'_b)_{b \in B}, g') = ((h_b \cdot h'_{g^{-1} \cdot b})_{b \in B}, g \cdot g').
\]

\begin{proposition}[Lemma 2.13 of \cite{Meir22}]\label{Proposition_WreathAut}
	Let $L_0$ and $L_1$ be relational languages, let $A$ be an $L_0$-structure, and let $B$ be an $L_1$-structure.  Then,
	\[
		\Aut(A \wreath B) \cong \Aut(A) \wreath \Aut(B).
	\]
\end{proposition}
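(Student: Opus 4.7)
The plan is to construct an explicit isomorphism
$$\Phi : \Aut(A) \wreath \Aut(B) \longrightarrow \Aut(A \wreath B), \quad ((h_b)_{b \in B}, g) \longmapsto \bigl((a,b) \mapsto (h_{g(b)}(a), g(b))\bigr),$$
where $\Aut(B)$ acts on the set $B$ in the natural way, and to verify that $\Phi$ is a bijective group homomorphism.

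First I would check that each $\Phi((h_b)_b, g)$ is indeed an $L_2$-automorphism of $A \wreath B$. It is bijective, with inverse built coordinatewise from $g^{-1}$ and the $h_{g(b)}^{-1}$. Preservation of $E$ reduces to the trivial observation that $b = b'$ iff $g(b) = g(b')$. For an $L_0$-symbol $R$ of arity $n$, an instance of $R$ holds in $A \wreath B$ iff all the second coordinates in the tuple agree (at some common $b$) and $A \models R$ on the first coordinates; the image tuple under $\Phi((h_b)_b, g)$ has all second coordinates equal to $g(b)$, and since $h_{g(b)} \in \Aut(A)$, the $L_0$-relation is preserved. For an $L_1$-symbol $R$, the relation depends only on second coordinates, and preservation follows from $g \in \Aut(B)$. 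A direct computation using the formula for the wreath product group operation stated in the paper then shows $\Phi$ is a homomorphism; the factor $g^{-1} \cdot b$ in that formula is precisely what is needed to match function composition in $\Aut(A \wreath B)$.

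Injectivity of $\Phi$ is then immediate: if $\Phi((h_b)_b, g) = \mathrm{id}$, reading second coordinates forces $g = \mathrm{id}_B$, after which the first coordinates force each $h_b = \mathrm{id}_A$. For surjectivity, fix $\sigma \in \Aut(A \wreath B)$. Since $\sigma$ preserves $E$, it sends $E$-classes to $E$-classes, and these are exactly the fibers $A \times \{b\}$; let $g : B \to B$ be the induced permutation, so $\sigma(A \times \{b\}) = A \times \{g(b)\}$. Write $\sigma(a,b) = (H_b(a), g(b))$ for some $H_b : A \to A$. Preservation of $L_0$-relations within a single fiber (which, by Definition \ref{Definition_Wreath}, is exactly preservation of the $L_0$-structure of $A$) shows each $H_b \in \Aut(A)$, and preservation of $L_1$-relations (picking witnesses from any fibers) shows $g \in \Aut(B)$. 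Setting $h_b := H_{g^{-1}(b)}$ then yields $\Phi((h_b)_b, g) = \sigma$.

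The main obstacle, though not deep, is the bookkeeping in the surjectivity argument: the reindexing $h_b := H_{g^{-1}(b)}$ must be correctly aligned with the $g(b)$ appearing in the definition of $\Phi$, and one must carefully isolate the $L_0$-clause from the $L_1$-clause in the automorphism property of $\sigma$. Both points rely in an essential way on the clause in Definition \ref{Definition_Wreath} that forces $L_0$-relations to hold only on tuples sharing a common second coordinate.
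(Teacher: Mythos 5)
Your proof is correct, and the verification of the homomorphism identity does work out: with the paper's convention $((h_b)_b,g)\cdot((h'_b)_b,g') = ((h_b\circ h'_{g^{-1}\cdot b})_b, gg')$ one computes $\Phi(x\cdot y)(a,b) = (h_{gg'(b)}(h'_{g'(b)}(a)), gg'(b)) = \Phi(x)(\Phi(y)(a,b))$, exactly as you claim. Note, however, that the paper does not prove this proposition at all; it imports it as Lemma 2.13 of \cite{Meir22} and only remarks that the transitivity hypothesis on $B$ appearing there is unnecessary under the present definition of lexicographic product. So there is no in-paper argument to compare against; your explicit isomorphism $((h_b)_b,g)\mapsto\bigl((a,b)\mapsto(h_{g(b)}(a),g(b))\bigr)$ is the standard direct verification and is the natural way to fill in the citation. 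One harmless caveat: your surjectivity step identifies the $E$-classes with the fibers $A\times\{b\}$, which requires $A\neq\emptyset$ (if $A=\emptyset$ the statement itself degenerates, since $\Aut(A\wreath B)$ is trivial while $\Aut(A)\wreath\Aut(B)\cong\Aut(B)$); the paper imposes non-emptiness of fibers explicitly in the analogous Lemma \ref{Lemma_WreathEmbeddings}, and you should do the same here.
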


Note that, although the statement of Lemma 2.13 of \cite{Meir22} assumes the transitivity of $B$, this is not needed in the proof if we use our definition of lexicographic product (it is only used in the first bullet point in the converse direction of the proof, which is unnecessary in our setting).

\begin{definition}[Full Product]\label{Definition_Direct}
	Let $L_0$ and $L_1$ be relational languages, let $A$ be an $L_0$-structure, and let $B$ be an $L_1$-structure.  Let $L_2$ be the language whose signature is the disjoint union of the signatures of $L_0$ and $L_1$ together with two new binary relation symbols, $E_0$ and $E_1$.  The \emph{\directproduct\ of $A$ and $B$}, denoted $A \btimes B$, is the $L_2$-structure with universe $A \times B$ such that, for all $R \in \sig(L_2)$ of arity $n$, for all $a_0, \dots, a_{n-1} \in A$, for all $b_0, \dots, b_{n-1} \in B$,
	\[
		A \btimes B \models R((a_0, b_0), \dots, (a_{n-1}, b_{n-1}))
	\]
	if and only if
	\begin{enumerate}
		\item if $R \in \sig(L_0)$, $A \models R(a_0, \dots, a_{n-1})$; and
		\item if $R \in \sig(L_1)$, $B \models R(b_0, \dots, b_{n-1})$; and
		\item if $R = E_0$, $a_0 = a_1$; and
		\item if $R = E_1$, $b_0 = b_1$.
	\end{enumerate}
\end{definition}

\begin{remark}
    This is precisely Definition 3.2 of \cite{bod15}, which differs slightly from the definition in Section 3 of \cite{bod14}.
\end{remark}

There is an obvious relationship between the automorphism groups of $A \btimes B$, $A$, and $B$.

\begin{proposition}[Proposition 3.1 of \cite{bod14}]
	Let $L_0$ and $L_1$ be relational languages, let $A$ be an $L_0$-structure, and let $B$ be an $L_1$-structure.  Then,
	\[
		\Aut(A \btimes B) \cong \Aut(A) \times \Aut(B).
	\]
\end{proposition}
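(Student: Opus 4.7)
The plan is to exhibit an explicit isomorphism $\Phi\colon \Aut(A) \times \Aut(B) \to \Aut(A \btimes B)$ defined by $\Phi(\sigma,\tau)(a,b) = (\sigma(a), \tau(b))$, and verify it is a bijective group homomorphism. That $\Phi$ is a well-defined homomorphism into the symmetric group of $A \times B$ is immediate from the formula, and injectivity follows by evaluating at arbitrary points. The content is (i) showing that each $\Phi(\sigma,\tau)$ actually preserves the $L_2$-structure and (ii) showing surjectivity.

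For (i), I would check each of the four clauses of Definition \ref{Definition_Direct} in turn. For $R \in \sig(L_0)$ (clause 1), preservation follows because $\sigma \in \Aut(A)$ preserves $R$ on the first coordinates, and the second coordinates are irrelevant. Clause (2) is symmetric using $\tau$. For $R = E_0$ (clause 3), the condition $a_0 = a_1$ is preserved by applying $\sigma$ to both sides; similarly $\tau$ preserves clause (4). Hence $\Phi(\sigma,\tau) \in \Aut(A \btimes B)$, and multiplicativity $\Phi(\sigma\sigma',\tau\tau') = \Phi(\sigma,\tau) \circ \Phi(\sigma',\tau')$ is read off the formula.

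For (ii), the key structural observation is that the interpretations of $E_0$ and $E_1$ in $A \btimes B$ are equivalence relations whose classes are exactly the ``fibers'' $\{a\} \times B$ (for $a \in A$) and $A \times \{b\}$ (for $b \in B$), respectively. Given $\pi \in \Aut(A \btimes B)$, since $\pi$ preserves $E_0$, it permutes the family $\{\{a\}\times B : a \in A\}$, inducing a permutation $\sigma$ of $A$ defined by $\pi(\{a\} \times B) = \{\sigma(a)\} \times B$. Symmetrically, $\pi$ induces a permutation $\tau$ of $B$. Because the singleton $\{(a,b)\}$ is the intersection of its $E_0$-class and its $E_1$-class, one gets $\pi(a,b) = (\sigma(a), \tau(b))$, so $\pi = \Phi(\sigma,\tau)$.

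It remains to check $\sigma \in \Aut(A)$ and $\tau \in \Aut(B)$. For any $n$-ary $R \in \sig(L_0)$ and $a_0,\dots,a_{n-1} \in A$, fix any $b \in B$; by Definition \ref{Definition_Direct}(1), $A \models R(a_0,\dots,a_{n-1})$ iff $A \btimes B \models R((a_0,b),\dots,(a_{n-1},b))$, and applying the automorphism $\pi$ (and clause (1) again in the reverse direction) yields $A \models R(\sigma(a_0),\dots,\sigma(a_{n-1}))$. The analogous argument handles $\tau$. No serious obstacle arises; the main conceptual step is recognizing that the auxiliary equivalence relations $E_0, E_1$ were engineered precisely so that fibers are definable, forcing every automorphism to split as a product.
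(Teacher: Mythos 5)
Your argument is correct. Note that the paper does not prove this proposition at all; it is imported as Proposition 3.1 of \cite{bod14}, so there is no internal proof to compare against. What you give is the standard direct verification, and it goes through: the only content is surjectivity, and your observation that the interpretations of $E_0$ and $E_1$ are equivalence relations whose classes are exactly the fibers $\{a\}\times B$ and $A\times\{b\}$, so that any automorphism permutes fibers and is therefore determined by a pair of permutations of $A$ and of $B$, is precisely the intended mechanism. In fact this is the same decomposition that underlies Lemma \ref{Lemma_DirectEmbeddings} (also quoted from \cite{bod14}): every $f\in\emb(A\btimes B, C\btimes D)$ splits as $f(a,b)=(g(a),h(b))$, and the proposition is the special case $C=A$, $D=B$ with $f$ bijective, since bijectivity of $f$ forces bijectivity of $g$ and $h$. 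The one pedantic caveat is that your phrases ``fix any $b\in B$'' and ``evaluating at arbitrary points'' silently assume $A$ and $B$ are non-empty; if, say, $B=\emptyset$ then $A\btimes B$ is empty and the statement itself degenerates, so this is a defect of the proposition as stated rather than of your proof, and it is harmless in the context in which the paper uses the result.
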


\begin{lemma}[\cite{bod14}]\label{Lemma_DirectEmbeddings}
	Let $L_0$ and $L_1$ be relational languages, let $A$ and $C$ be $L_0$-structures, and let $B$ and $D$ be $L_1$-structures.  For all $f \in \emb\left( A \btimes B, C \btimes D \right)$, there exists $g \in \emb(A, C)$ and $h \in \emb(B, D)$ such that, for all $a \in A$ and $b \in B$,
	\[
		f(a,b) = (g(a), h(b)).
	\]
	Moreover, for any such $g$ and $h$, constructing $f$ in this manner gives an embedding from $A \btimes B$ to $C \btimes D$.
\end{lemma}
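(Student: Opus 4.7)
The plan is to extract $g$ and $h$ from $f$ by exploiting the fact that the two new binary relations $E_0$ and $E_1$ in the full product encode the ``fiber'' structure: for any element $(a,b) \in A \btimes B$, its $E_0$-class is exactly $\{a\} \times B$ and its $E_1$-class is exactly $A \times \{b\}$, and analogously in $C \btimes D$. I will assume $A$ and $B$ are non-empty (otherwise the claim is vacuous, with $g$ or $h$ taken to be the empty function).

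Since $f$ is an embedding it preserves both $E_0$ and $\neg E_0$. Hence for each $a \in A$ there is a unique element $g(a) \in C$ such that $f(\{a\} \times B) \subseteq \{g(a)\} \times D$, and distinct $E_0$-classes of $A \btimes B$ land in distinct $E_0$-classes of $C \btimes D$, so $g$ is injective. The analogous argument with $E_1$ produces an injection $h \colon B \to D$ with $f(A \times \{b\}) \subseteq C \times \{h(b)\}$. Since $f(a,b)$ belongs to $\{g(a)\} \times D$ and to $C \times \{h(b)\}$, we conclude $f(a,b) = (g(a), h(b))$.

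Next, I would verify that $g \in \emb(A,C)$. Fix an $n$-ary $R \in \sig(L_0)$, elements $a_0,\dots,a_{n-1} \in A$, and any $b \in B$ (using $B \neq \emptyset$). By clause (1) of Definition \ref{Definition_Direct},
\[
    A \models R(a_0,\dots,a_{n-1}) \iff A \btimes B \models R((a_0,b),\dots,(a_{n-1},b)).
\]
Applying $f$ and using that it preserves $R$ and $\neg R$, the right-hand side is equivalent to $C \btimes D \models R((g(a_0),h(b)),\dots,(g(a_{n-1}),h(b)))$, which, again by clause (1), is equivalent to $C \models R(g(a_0),\dots,g(a_{n-1}))$. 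Symmetrically, using clause (2), $h \in \emb(B,D)$.

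For the moreover part, given $g \in \emb(A,C)$ and $h \in \emb(B,D)$, the map $f(a,b) := (g(a), h(b))$ is injective because $g$ and $h$ are. Preservation of each relation symbol is then a direct check against the four clauses of Definition \ref{Definition_Direct}: for $R \in \sig(L_0)$ use that $g$ is an embedding; for $R \in \sig(L_1)$ use that $h$ is an embedding; and clauses (3) and (4) for $E_0$ and $E_1$ hold trivially since equality of first or second coordinates is preserved by $f$ and reflected by injectivity of $g$ and $h$. There is no substantive obstacle here; the only subtlety is the non-emptiness hypothesis needed to transport an $L_0$-relational instance in $A$ into $A \btimes B$ (and symmetrically for $L_1$), which is the reason we split off the degenerate cases at the start.
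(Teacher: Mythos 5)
Your proposal is correct and follows essentially the same route as the paper's own (abbreviated) proof: both extract $g$ and $h$ by observing that preservation of $E_0$ and $E_1$ forces the first coordinate of $f(a,b)$ to depend only on $a$ and the second only on $b$, and then check the embedding conditions coordinatewise. You simply spell out the details (injectivity via preservation of $\neg E_0$ and $\neg E_1$, the non-emptiness caveat, and the ``moreover'' direction) that the paper leaves to the reader.
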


\if\papermode0
	\begin{proof}
		Fix $f \in \emb\left( A \btimes B, C \btimes D \right)$.  For any $a \in A$ and $b, b' \in B$, $\pi_0(f(a,b)) = \pi_0(f(a,b'))$.  Thus, we can define $g : A \rightarrow C$ by setting, for all $a \in A$, $g(a) = \pi_0(f(a,b))$ for some (any) choice of $b \in B$.  One can easily check that $g \in \emb(A,C)$.  Construct $h \in \emb(B,D)$ similarly.
	\end{proof}
\fi

Indivisibility interacts nicely with \wreathproduct s but not with \directproduct s.

\begin{proposition}[Proposition 2.21 of \cite{Meir16}]\label{Proposition_WreathIndivisible}
	Let $L_0$ and $L_1$ be relational languages, let $A$ be an $L_0$-structure, and let $B$ be an $L_1$-structure.  If $A$ and $B$ are indivisible, then so is $A \wreath B$.
\end{proposition}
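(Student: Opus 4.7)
The plan is to prove indivisibility of $A \wreath B$ by using indivisibility of each factor in turn, first on fibers and then on the base. Fix a coloring $c : A \wreath B \to k$. For each $b \in B$, consider the fiber $A_b := \{(a,b) : a \in A\}$, which is an $L_0$-substructure of $A \wreath B$ isomorphic to $A$ (via $(a,b) \mapsto a$), and the induced coloring $c_b : A \to k$ given by $c_b(a) = c(a,b)$. Since $A$ is indivisible, for each $b \in B$ we may pick a substructure $A'_b \subseteq A$ with $A'_b \cong A$ on which $c_b$ is constant, say with value $d(b) \in k$.

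Next, use indivisibility of $B$ applied to the coloring $d : B \to k$ to obtain a substructure $B' \subseteq B$ with $B' \cong B$ on which $d$ is constant with some value $i < k$. Set
\[
	N := \{ (a,b) : b \in B', \ a \in A'_b \} \subseteq A \wreath B.
\]
By construction, $c(a,b) = c_b(a) = d(b) = i$ for every $(a,b) \in N$, so $c$ is monochromatic on $N$.

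It remains to verify $N \cong A \wreath B$ as $L_2$-structures. Fix isomorphisms $\phi_b : A'_b \to A$ for each $b \in B'$ and $\psi : B' \to B$, and define $F : N \to A \wreath B$ by $F(a,b) = (\phi_b(a), \psi(b))$. For the new relation symbol $E$, $F$ preserves and reflects "same second coordinate.'' For $R \in \sig(L_1)$, the relation in $N$ and in $A \wreath B$ depends only on the $B$-coordinates, and $\psi$ is an $L_1$-isomorphism. For $R \in \sig(L_0)$, the relation holds in $N$ exactly when all arguments lie in a single fiber $A'_b$ and $A'_b \models R$, which $\phi_b$ transfers to the corresponding relation in $A$; this is exactly the condition for $R$ to hold in $A \wreath B$ after applying $F$. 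Hence $F$ is an $L_2$-isomorphism and $A \wreath B$ is indivisible.

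The one subtle point — the part that would be the main obstacle if one tried to push this argument to other products — is that the lexicographic product structure really is determined by the isomorphism types of the individual fibers together with the base: the definition of $\bigsqcup_{b \in B} A_b$ allows each fiber to vary in isomorphism type, so replacing each fiber by a chosen isomorphic copy $A'_b$ and the base by an isomorphic copy $B'$ still yields a lexicographic product isomorphic to $A \wreath B$. This is precisely what fails for the full product $A \btimes B$, whose two relations $E_0$ and $E_1$ force coordinated behavior across fibers, foreshadowing the contrast to be handled in Theorem \ref{Theorem_DirectAgeInd}.
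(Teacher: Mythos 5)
Your proof is correct. Note that the paper does not actually prove this statement: it imports it as Proposition 2.21 of \cite{Meir16}, remarking only that Meir's slightly different definition of the lexicographic product agrees with the paper's up to the relevant identifications. Your argument is the natural two-stage one (homogenize each fiber using indivisibility of $A$, then homogenize the induced coloring of the base using indivisibility of $B$), which is presumably the same strategy as Meir's; a small advantage of writing it out as you have is that it works directly with Definition \ref{Definition_Wreath}, where the key point you isolate --- that the induced substructure on $N$ is literally $\bigsqcup_{b \in B'} A'_b$ and that a lexicographic product is determined up to isomorphism by the base and the isomorphism types of the fibers --- is immediate, so no translation between definitions is needed. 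One minor slip in your closing remark: the failure of this transfer for the \directproduct\ at the level of structures is Example \ref{Example_DirectIndivisible} (and Example \ref{Example_AgeIndNotInd}); Theorem \ref{Theorem_DirectAgeInd} is the \emph{positive} result that \ageindivisibility\ of classes is preserved by \directproduct s, so it is not the contrast you mean.
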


Note that, even though the proof of Proposition 2.21 of \cite{Meir16} uses Meir's definition of lexicographic product, indivisibility implies that it is equivalent to our definition.

The analogue of Proposition \ref{Proposition_WreathIndivisible} does not hold for \directproduct s.

\begin{example}\label{Example_DirectIndivisible}
	Let $L$ be the language with an empty signature and let $A$ be the $L$-structure that is countably infinite.  Then, $A$ is indivisible (by the Pigeonhole Principle).  However, $A \btimes A$ is not indivisible.  In fact, $A \btimes A$ is isomorphic to the structure from Example \ref{Example_AgeIndNotInd}.
\end{example}

\begin{proposition}[Proposition 2.14(2) of \cite{Meir22}, Proposition 3.3 of \cite{bod15}]\label{Proposition_Homogeneity}
	Let $L_0$ and $L_1$ be relational languages, let $A$ be an $L_0$-structure, and let $B$ be an $L_1$-structure.  If $A$ and $B$ are homogeneous, then both $A \wreath B$ and $A \btimes B$ are homogeneous.
\end{proposition}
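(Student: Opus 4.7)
The plan is to treat each product separately by factoring a given finite embedding through the coordinate projections, extending each factor using the homogeneity of the corresponding structure, and then reassembling the pieces into a global automorphism.

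For the lexicographic product, fix finite $X \subseteq A \wreath B$ and an embedding $f : X \to A \wreath B$. The new relation $E$ holds exactly between pairs sharing a $B$-coordinate, so preservation of $E$ by $f$ means that the projection $\pi_B$ to the second coordinate induces a well-defined injection $\bar f : Y \to B$ on $Y := \pi_B(X)$. Clause (2) of Definition \ref{Definition_Wreath} ensures $\bar f$ is an $L_1$-embedding, so homogeneity of $B$ extends it to some $\sigma \in \Aut(B)$. For each $b \in Y$, the fiber $X_b := \{a \in A : (a,b) \in X\}$ is mapped by $f$ into the $E$-class over $\sigma(b)$, which is itself a copy of $A$; clause (1) then makes this induced map an $L_0$-embedding $X_b \to A$, which we extend to $\tau_b \in \Aut(A)$ by homogeneity of $A$. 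Setting $\tau_b := \mathrm{id}_A$ for $b \in B \setminus Y$, the map $g(a,b) := (\tau_b(a), \sigma(b))$ is a bijection, and a quick check against the three clauses of Definition \ref{Definition_Wreath} shows that it is an automorphism extending $f$.

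For the full product, fix finite $X \subseteq A \btimes B$ and an embedding $f : X \to A \btimes B$. Preservation of $E_0$ and $E_1$ by $f$ forces both coordinate projections to descend to well-defined injections $\bar f_A : \pi_A(X) \to A$ and $\bar f_B : \pi_B(X) \to B$. Because in $A \btimes B$ the interpretation of any $L_0$-symbol depends only on the $A$-coordinates (and similarly for $L_1$), these induced maps are respectively an $L_0$-embedding and an $L_1$-embedding. Extend them to $\sigma_A \in \Aut(A)$ and $\sigma_B \in \Aut(B)$ by homogeneity, and define $g(a,b) := (\sigma_A(a), \sigma_B(b))$. The converse direction of Lemma \ref{Lemma_DirectEmbeddings} guarantees that $g$ is an embedding $A \btimes B \to A \btimes B$, and bijectivity of $\sigma_A, \sigma_B$ upgrades it to an automorphism; on $X$ it agrees with $f$ by construction.

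The only mildly delicate step is the lexicographic case, where different fibers of $\pi_B$ must be handled independently: we need a separately chosen $\tau_b$ for each $b \in Y$ and must also supply arbitrary automorphisms on fibers outside $Y$ to define $g$ globally. Everything else is a routine diagram chase exploiting the fact that each kind of relation symbol in the product language, namely the $L_0$-symbols, the $L_1$-symbols, and the new $E$ (or $E_0,E_1$), depends on a cleanly identified portion of each input tuple.
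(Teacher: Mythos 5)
Your proof is correct. Note that the paper itself does not prove this proposition: it is quoted from the literature (Proposition 2.14(2) of Meir and Proposition 3.3 of Bodirsky), with the paper only remarking that the cited proofs go through under its conventions. Your argument — read off the $B$-coordinate map from $E$-preservation (resp.\ the two coordinate maps from $E_0$- and $E_1$-preservation), check that each induced map is an embedding of the relevant reduct, extend by homogeneity of the factors, and reassemble fiberwise (resp.\ via the converse direction of Lemma \ref{Lemma_DirectEmbeddings}) — is the standard one and handles the only delicate points correctly, namely well-definedness of the induced maps and the need for independently chosen $\tau_b$ on each fiber, including identity maps on fibers not meeting $X$.
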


What we are calling ``homogeneity'' here is what is called ``strongly $\aleph_0$-homogeneous'' in Proposition 2.14 of \cite{Meir22}.  Note that the proof of Proposition 3.3 of \cite{bod15} does not use the assumption of $\aleph_0$-categoricity nor the assumption of being Ramsey to prove the transfer of homogeneity.

\section{Products of Classes}\label{Section_Classes}

In this section, we are interested in extending the definitions of products from Subsection \ref{Subsection_Structures} to classes of structures.

\begin{definition}\label{Definition_Wreath_Classes}
	For each $t < 2$, let $L_t$ be a relational language and let $\KK_t$ be a class of finite $L_t$-structures closed under isomorphism.
	\begin{enumerate}
		\item Let $\KK_0 \wreath \KK_1$ be the class of all structures isomorphic to $\bigsqcup_{b \in B} A_b$ for some $B \in \KK_1$ and $A_b \in \KK_0$ for each $b \in B$.  This is called the \emph{\wreathproduct} of $\KK_0$ and $\KK_1$.
		\item Let $\KK_0 \btimes \KK_1$ be the class of all structures embeddable into $A \btimes B$ for some $A \in \KK_0$ and $B \in \KK_1$.  This is called the \emph{\directproduct} of $\KK_0$ and $\KK_1$.
		\item Let $L_2$ be the language whose signature is the disjoint union of the signatures of $L_0$ and $L_1$.  Let $\KK_0 * \KK_1$ be the class of all $L_2$-structures $A$ such that $A |_{L_0} \in \KK_0$ and $A |_{L_1} \in \KK_1$.  This is called the \emph{free superposition} of $\KK_0$ and $\KK_1$.
	\end{enumerate}
\end{definition}

\begin{remark}
	For brevity, when using these definitions in proofs, we will forgo writing ``up to isomorphism.''  That is, we will assume that a structure in $\KK_0 \btimes \KK_1$ is a substructure of $A \btimes B$ for $A \in \KK_0$ and $B \in \KK_1$ (instead of merely isomorphic to such a structure) and similarly for $\KK_0 \wreath \KK_1$.
\end{remark}

\begin{remark}\label{Remark_SuperpositionStructures}
	One could, in general, define the free superposition of two structures, $A$ and $B$ given an accompanying bijection from $A$ to $B$.  Let $f : A \rightarrow B$ be a bijection from $A$ to $B$ and define $A *_f B$ to be a structure in the language whose signature is the disjoint union of the signatures of the language of $A$ and the language of $B$.  The universe of $A *_f B$ is $A$ and we superimpose the structure of $B$ on it via $f$.  Note that this is the same (up to interdefinability) as taking the $f$-diagonal of $A \btimes B$.
\end{remark}

This is, in some respect, the ``correct'' generalization of products to classes of structures, as the age commutes with the \wreathproduct\ and the \directproduct.

\begin{lemma}\label{Lemma_AgeProducts}
	Let $L_0$ and $L_1$ be relational languages, let $A$ be an $L_0$-structure, and let $B$ be an $L_1$-structure.  Then,
	\begin{enumerate}
		\item $\age(A \wreath B) = \age(A) \wreath \age(B)$; and
		\item $\age(A \btimes B) = \age(A) \btimes \age(B)$.
	\end{enumerate}
\end{lemma}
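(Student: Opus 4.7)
The plan is to prove each of (1) and (2) by establishing both inclusions, in each case reducing the verification to a case-by-case check on the relation symbols in the relevant language $L_2$.

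For part (1), the inclusion $\age(A \wreath B) \subseteq \age(A) \wreath \age(B)$ goes by a decomposition: given a finite substructure $S$ of $A \wreath B$, project onto the second coordinate to define $B' := \{b \in B : (a,b) \in S \text{ for some } a\}$, which lies in $\age(B)$, and for each $b \in B'$ set $A'_b := \{a \in A : (a,b) \in S\}$, which lies in $\age(A)$. I would then check that the identity map identifies $S$ with $\bigsqcup_{b \in B'} A'_b$, using the three clauses of Definition \ref{Definition_Wreath}. For the reverse inclusion, given $\bigsqcup_{b \in B'} A'_b$ with $B' \in \age(B)$ and each $A'_b \in \age(A)$, choose $L_1$-embeddings $g : B' \hookrightarrow B$ and $L_0$-embeddings $h_b : A'_b \hookrightarrow A$ for $b \in B'$, and define $\Phi(a,b) := (h_b(a), g(b))$. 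I would then check the three clauses: for $R \in \sig(L_0)$, the fact that $\Phi$ preserves $R$ reduces to $h_{b_0}$ being an $L_0$-embedding once all second coordinates coincide; for $R \in \sig(L_1)$, it reduces to $g$ being an $L_1$-embedding; and for $E$, one uses injectivity of $g$ to get $g(b) = g(b') \iff b = b'$.

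For part (2), the inclusion $\age(A \btimes B) \subseteq \age(A) \btimes \age(B)$ is immediate: any finite $S \subseteq A \btimes B$ lies inside $\pi_0(S) \btimes \pi_1(S)$, where $\pi_0(S) \in \age(A)$ and $\pi_1(S) \in \age(B)$. For the reverse, a structure in $\age(A) \btimes \age(B)$ embeds into some $A' \btimes B'$ with $A' \in \age(A)$ and $B' \in \age(B)$; fixing embeddings $g \in \emb(A', A)$ and $h \in \emb(B', B)$, the ``moreover'' clause of Lemma \ref{Lemma_DirectEmbeddings} yields an embedding $A' \btimes B' \hookrightarrow A \btimes B$ sending $(a,b) \mapsto (g(a), h(b))$, and composing gives the desired embedding into $A \btimes B$.

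There is no serious obstacle here: both parts are routine structural verifications, and part (2) in particular already hands us the needed coordinate-wise embedding via Lemma \ref{Lemma_DirectEmbeddings}. The only mildly delicate point is in the reverse direction of (1), where one must confirm that the new relation $E$ behaves correctly under $\Phi$; this follows automatically from the injectivity of the chosen embedding $g$ of the index structure, and needs to be noted explicitly since $E$ is the one symbol that is not inherited from $L_0$ or $L_1$.
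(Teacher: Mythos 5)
Your proof is correct and follows essentially the same route as the paper's, which treats both parts as direct consequences of the definitions: decompose a finite substructure of $A \wreath B$ (resp.\ $A \btimes B$) via the second-coordinate projection into finitely many non-empty fibers, and conversely realize an abstract member of $\age(A) \wreath \age(B)$ (resp.\ $\age(A) \btimes \age(B)$) inside the product by choosing embeddings of the components, up to isomorphism. Your explicit clause-by-clause verification of the relations, including the behavior of $E$ under the injective index embedding, is just a spelled-out version of what the paper leaves implicit.
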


\begin{proof}
	\if\papermode0
		(1): Fix $C \in \age(A \wreath B)$.  Then, up to isomorphism, $C \subseteq A \wreath B$ and $C$ is finite.  Clearly there exists $D \subseteq B$ finite and, for each $d \in D$, $E_d \subseteq A$ finite and non-empty such that $C = \bigsqcup_{d \in D} E_d$.  Thus, $C \in \age(A) \wreath \age(B)$.  Conversely, if $C \in \age(A) \wreath \age(B)$, then there exists $D \in \age(B)$ and $E_d \in \age(A)$ for each $d \in D$ such that $C = \bigsqcup_{d \in D} E_d$.  Up to isomorphism, we may assume $D \subseteq B$ and $E_d \subseteq A$ for each $d \in D$.  Therefore, $C \in \age(A \wreath B)$.
	
		(2): Fix $C \in \age(A \btimes B)$.  Then, up to isomorphism, $C \subseteq A \btimes B$ and $C$ is finite.  Clearly there exists $E \subseteq A$ and $D \subseteq B$ finite such that $C \subseteq E \btimes D$.  Thus, $C \in \age(A) \btimes \age(B)$.  Conversely, if $C \in \age(A) \btimes \age(B)$, then there exists $E \in \age(A)$ and $D \in \age(B)$ such that $C \subseteq E \btimes D$.  Up to isomorphism, we may assume $E \subseteq A$ and $D \subseteq B$.  Therefore, $C \in \age(A \btimes B)$.
	\else
		Follows from definition.
	\fi
\end{proof}

Using Proposition \ref{Proposition_Homogeneity} and Lemma \ref{Lemma_AgeProducts}, together with \fraisse's Theorem, we obtain the following result.

\begin{proposition}\label{Proposition_Fraisse}
	For each $t < 2$, let $L_t$ be a relational language and let $\KK_t$ be a \fraisse~class of finite $L_t$-structures closed under isomorphism.  Then,
	\begin{enumerate}
		\item $\KK_0 \wreath \KK_1$ is a \fraisse~class and
			\[
				\Flim(\KK_0 \wreath \KK_1) = \Flim(\KK_0) \wreath \Flim(\KK_1).
			\]
		\item $\KK_0 \btimes \KK_1$ is a \fraisse~class and
			\[
				\Flim(\KK_0 \btimes \KK_1) = \Flim(\KK_0) \btimes \Flim(\KK_1).
			\]
	\end{enumerate}
\end{proposition}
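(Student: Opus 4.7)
The plan is to combine the two immediately preceding results with \fraisse's Theorem, so that the Fra\"{i}ss\'{e} limit identities fall out as a uniqueness statement. For each $t < 2$, I would set $M_t := \Flim(\KK_t)$, so that $M_t$ is a countable homogeneous $L_t$-structure with $\age(M_t) = \KK_t$.

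First, I would note that both $M_0 \wreath M_1$ and $M_0 \btimes M_1$ are countable, since their universes are built, respectively, as a disjoint union indexed by the countable set $M_1$ of copies of subsets of the countable set $M_0$, and as the product of two countable sets. Second, I would apply Proposition \ref{Proposition_Homogeneity} to conclude that $M_0 \wreath M_1$ and $M_0 \btimes M_1$ are homogeneous. Third, Lemma \ref{Lemma_AgeProducts} gives $\age(M_0 \wreath M_1) = \KK_0 \wreath \KK_1$ and $\age(M_0 \btimes M_1) = \KK_0 \btimes \KK_1$. Applying the ``if'' direction of \fraisse's Theorem to the countable homogeneous structure $M_0 \wreath M_1$ then shows that $\KK_0 \wreath \KK_1$ is a \fraisse~class, and the uniqueness clause forces $\Flim(\KK_0 \wreath \KK_1) \cong M_0 \wreath M_1 = \Flim(\KK_0) \wreath \Flim(\KK_1)$. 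The \directproduct\ case is handled identically.

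The only bookkeeping step is to verify the standing hypotheses of \fraisse's Theorem for the product classes: closure under isomorphism is built directly into Definition \ref{Definition_Wreath_Classes}, and the countability of the isomorphism classes of $\KK_0 \wreath \KK_1$ (resp.\ $\KK_0 \btimes \KK_1$) follows because any finite structure in the product class is determined up to isomorphism by finitely many data drawn from $\KK_0$ and $\KK_1$, each of which already has countably many isomorphism classes. There is no real obstacle here; the proof is essentially an assembly of Proposition \ref{Proposition_Homogeneity}, Lemma \ref{Lemma_AgeProducts}, and \fraisse's Theorem, with the mild caveat that the transfer of homogeneity in Proposition \ref{Proposition_Homogeneity} must be genuinely available in both product settings, which the discussion after that proposition confirms.
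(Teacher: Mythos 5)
Your proposal is correct and follows essentially the same route as the paper's proof: invoke Proposition \ref{Proposition_Homogeneity} for the transfer of homogeneity, Lemma \ref{Lemma_AgeProducts} to identify the age of the product of limits with the product of the classes, and the uniqueness clause of \fraisse's Theorem to conclude. The extra bookkeeping you include (countability of the universe and of the isomorphism classes of the product class) is glossed over in the paper as ``clearly countable,'' but it is the same argument.
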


\begin{proof}
	(1): Since $\Flim(\KK_0)$ and $\Flim(\KK_1)$ are homogeneous, by Proposition \ref{Proposition_Homogeneity}, $\Flim(\KK_0) \wreath \Flim(\KK_1)$ is homogeneous.  It is clearly countable.  Finally, by Lemma \ref{Lemma_AgeProducts},
	\[
		\age(\Flim(\KK_0) \wreath \Flim(\KK_1)) = \age(\Flim(\KK_0)) \wreath \age(\Flim(\KK_1)) = \KK_0 \wreath \KK_1.
	\]
	Therefore, by the uniqueness of \fraisse~limits,
	\[
		\Flim(\KK_0 \wreath \KK_1) = \Flim(\KK_0) \wreath \Flim(\KK_1).
	\]
	
	(2): Follows similarly as (1).
\end{proof}

\begin{example}\label{Example_FreeFraisse}
	The analogue of Proposition \ref{Proposition_Fraisse} does not hold for free superposition.  Let $L$ be the language consisting of a single unary predicate, $P$, and let $\KK$ be the class of all finite $L$-structures $A$ where $|P(A)| \le 1$.  Then, it is easy to check that $\KK$ is a \fraisse~class.  However, $\KK * \KK$ does not have the joint embedding property (nor the amalgamation property).  For more information, see Example 3.4 of \cite{GuiPar}.
\end{example}

If we assume further that the classes have the strong amalgamation property, then the free superposition is a \fraisse~class.

\begin{proposition}[(3.9) of \cite{Cam90}]\label{Proposition_Known}
	If $\KK_0$ and $\KK_1$ are \fraisse~classes with the strong amalgamation property, then $\KK_0 * \KK_1$ is a \fraisse~class (with the strong amalgamation property).
\end{proposition}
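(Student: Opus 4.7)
The plan is to verify, one by one, the defining conditions of a Fra\"iss\'e class for $\KK_0 * \KK_1$ together with the strong amalgamation property, deriving the joint embedding property from strong amalgamation over the empty structure. Closure under isomorphism is immediate. The hereditary property follows because for any $A \in \KK_0 * \KK_1$ and $B \subseteq A$ (in $L_2$), one has $B|_{L_0} \subseteq A|_{L_0} \in \KK_0$ and $B|_{L_1} \subseteq A|_{L_1} \in \KK_1$, and both $\KK_0$ and $\KK_1$ have HP. Countably many isomorphism classes follows from the fact that on any fixed finite underlying set there are only countably many compatible pairs of an $L_0$-structure from $\KK_0$ and an $L_1$-structure from $\KK_1$. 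Since HP places the empty structure into both $\KK_0$ and $\KK_1$, it lies in $\KK_0 * \KK_1$, so JEP will be a special case of SAP once SAP is established.

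The crux is strong amalgamation. Fix $A, B_0, B_1 \in \KK_0 * \KK_1$ with $L_2$-embeddings $f_0 \in \emb(A, B_0)$ and $f_1 \in \emb(A, B_1)$. Applying strong amalgamation in $\KK_0$ to the $L_0$-reducts produces $C_0 \in \KK_0$ and $L_0$-embeddings $g_0^{(0)}, g_1^{(0)}$ satisfying $g_0^{(0)} \circ f_0 = g_1^{(0)} \circ f_1$ and $g_0^{(0)}(B_0) \cap g_1^{(0)}(B_1) = g_0^{(0)}(f_0(A))$. Replacing $C_0$ by an isomorphic copy, I may assume that its underlying set is the set-theoretic pushout
\[
    P \;:=\; \bigl(B_0 \sqcup B_1\bigr) \big/ \bigl(f_0(a) \sim f_1(a) : a \in A\bigr),
\]
with $g_0^{(0)}$ and $g_1^{(0)}$ the canonical inclusions. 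Doing the same for the $L_1$-reducts yields $C_1 \in \KK_1$ on the same underlying set $P$, again with canonical inclusions. Now define $C$ to be the $L_2$-structure on $P$ whose $L_0$-reduct is $C_0$ and whose $L_1$-reduct is $C_1$. By construction, $C \in \KK_0 * \KK_1$, and the canonical inclusions $g_0 : B_0 \hookrightarrow P$ and $g_1 : B_1 \hookrightarrow P$ are $L_2$-embeddings because they are embeddings under each reduct separately. Commutativity $g_0 \circ f_0 = g_1 \circ f_1$ and the disjointness condition $g_0(B_0) \cap g_1(B_1) = g_0(f_0(A))$ hold at the level of underlying sets by the very definition of $P$.

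The main obstacle, and the reason the hypothesis of \emph{strong} amalgamation on both factors is essential, is this alignment of universes: ordinary amalgamation would only guarantee the existence of $L_0$- and $L_1$-amalgams each having its own underlying set, potentially with different identifications outside $A$, and there would be no coherent way to impose both the $L_0$- and $L_1$-structure on a common set. Strong amalgamation is exactly what lets us pin the underlying set of each amalgam down to the canonical pushout $P$, after which gluing the two reducts into a single $L_2$-structure is immediate and the strong amalgamation clause is verified. Finally, applying this construction with $A = \emptyset$ yields JEP, completing the proof that $\KK_0 * \KK_1$ is a Fra\"iss\'e class with the strong amalgamation property.
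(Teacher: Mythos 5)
The paper does not prove this proposition itself---it is quoted as (3.9) of Cameron's book---so there is no in-paper argument to compare against; judged on its own, your proof is correct and is the standard argument for this fact. The key point, realizing both the $L_0$-amalgam and the $L_1$-amalgam on the common underlying set given by the set-theoretic pushout $P$ and then superimposing the two reducts, is exactly what strong amalgamation buys you, and your verification of commutativity and of the disjointness clause at the level of $P$ is right. One small step you elide: strong amalgamation only guarantees $g_0^{(0)}(B_0) \cap g_1^{(0)}(B_1) = g_0^{(0)}(f_0(A))$, not that $C_0$ equals $g_0^{(0)}(B_0) \cup g_1^{(0)}(B_1)$, so before you can ``assume the underlying set is $P$'' you must first pass to the substructure of $C_0$ on that union (legitimate by the hereditary property, which you have) and only then transport along the bijection with $P$; as written, ``replacing $C_0$ by an isomorphic copy'' is not quite enough, since an isomorphic copy of $C_0$ may still be strictly larger than $P$. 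The derivation of JEP from strong amalgamation over the empty structure is fine under the paper's conventions (relational languages, and the hereditary property as stated does put the empty structure into each non-empty class), though it is worth being aware that this step depends on the convention that empty structures are admitted.
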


\subsection{\fraisse~Properties and Products of Classes} 

By Proposition \ref{Proposition_Fraisse}, the conjunction of the hereditary property, the joint embedding property, and the amalgamation property is preserved under \wreathproduct s and \directproduct s.  This is noted for \wreathproduct s in Lemma 3.3 of \cite{sokQuotients}.  One might ask whether each of these properties, in isolation, is preserved.  It is easy to see that the hereditary property is preserved under \wreathproduct s.  By definition, regardless of whether the component classes have the hereditary property, the full product will.  Next, we will see that the amalgamation property is also preserved under \wreathproduct s and \directproduct s, under mild assumptions.  The joint embedding property works similarly.

\begin{proposition}\label{Proposition_WreathDirectAP}
	For each $t < 2$, let $L_t$ be a relational language and let $\KK_t$ be a class of finite $L_t$-structures closed under isomorphism.
	\begin{enumerate}
		\item If $\KK_0$ has the joint embedding property and $\KK_0$ and $\KK_1$ have the amalgamation property, then $\KK_0 \wreath \KK_1$ has the amalgamation property.
		\item If $\KK_0$ and $\KK_1$ have the hereditary property and the amalgamation property, then $\KK_0 \btimes \KK_1$ has the amalgamation property.
	\end{enumerate}
\end{proposition}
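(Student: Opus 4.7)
The plan is to decompose every structure in the product class into its $\KK_0$- and $\KK_1$-components, amalgamate inside the respective classes, and then reassemble. I will handle the two items separately.

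For (1), given $A, B_0, B_1 \in \KK_0 \wreath \KK_1$ with embeddings $f_t : A \to B_t$, I would first use that $L_2$-embeddings preserve and reflect $E$ to get, canonically, $X = \bigsqcup_{x \in X^{(1)}} X^{(0)}_x$ for each $X \in \{A, B_0, B_1\}$, with $X^{(1)} \in \KK_1$ the $E$-quotient and each fiber $X^{(0)}_x \in \KK_0$; each $f_t$ then decomposes into a quotient embedding $f_t^{(1)} : A^{(1)} \to B_t^{(1)}$ together with fiber embeddings $A^{(0)}_a \to B^{(0)}_{t,\,f_t^{(1)}(a)}$. First I apply AP in $\KK_1$ to amalgamate $A^{(1)}, B_0^{(1)}, B_1^{(1)}$, producing $C^{(1)} \in \KK_1$ with $g_t^{(1)} : B_t^{(1)} \to C^{(1)}$ satisfying $g_0^{(1)} \circ f_0^{(1)} = g_1^{(1)} \circ f_1^{(1)}$. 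Next, for each $c \in C^{(1)}$, I construct a fiber $C^{(0)}_c \in \KK_0$ by cases on the preimages of $c$ under the $g_t^{(1)}$: use AP in $\KK_0$ to amalgamate the three relevant fibers when $c$ lies in the common image $(g_t^{(1)} \circ f_t^{(1)})(A^{(1)})$; simply copy the single relevant fiber when $c$ is in the image of only one $g_t^{(1)}$; use JEP in $\KK_0$ to combine $B^{(0)}_{0,b}$ and $B^{(0)}_{1,b'}$ when $c$ lies in both images but outside the pushforward of $A^{(1)}$ (a case that can arise because $\KK_1$ is assumed only to have AP, not strong AP); and pick any element of $\KK_0$ when $c$ is in neither image. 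Finally, assemble $C = \bigsqcup_{c \in C^{(1)}} C^{(0)}_c \in \KK_0 \wreath \KK_1$ and piece together the quotient and fiber embeddings into $g_t : B_t \to C$ satisfying $g_0 \circ f_0 = g_1 \circ f_1$.

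For (2), given $A, B_0, B_1 \in \KK_0 \btimes \KK_1$ with embeddings $f_t : A \to B_t$, each such $X$ sits inside some $X^{(0)} \btimes X^{(1)}$ with $X^{(0)} \in \KK_0$ and $X^{(1)} \in \KK_1$, and HP of both classes lets me replace these factors by the projections $\pi_0(X) \in \KK_0$ and $\pi_1(X) \in \KK_1$. Preservation and reflection of $E_0, E_1$ by each $f_t$ yields well-defined coordinate maps $f_t^{(0)} : A^{(0)} \to B_t^{(0)}$ and $f_t^{(1)} : A^{(1)} \to B_t^{(1)}$ with $f_t(a, b) = (f_t^{(0)}(a), f_t^{(1)}(b))$; these are embeddings because the $E_i$ detect equality in each coordinate and $L_0, L_1$ relations in a full product depend only on the appropriate coordinate. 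Apply AP separately in $\KK_0$ and $\KK_1$ to obtain $C^{(0)} \in \KK_0$ and $C^{(1)} \in \KK_1$ with compatible embeddings $g_t^{(0)}, g_t^{(1)}$; then $C = C^{(0)} \btimes C^{(1)} \in \KK_0 \btimes \KK_1$, and the coordinatewise maps $g_t = (g_t^{(0)}, g_t^{(1)})$, restricted to $B_t \subseteq B_t^{(0)} \btimes B_t^{(1)}$, give the desired amalgam, with $g_0 \circ f_0 = g_1 \circ f_1$ following from the two separate amalgamation identities.

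The most delicate point is the case analysis in (1), specifically the dichotomy that when $c \in g_0^{(1)}(B_0^{(1)}) \cap g_1^{(1)}(B_1^{(1)})$ the preimage under $g_0^{(1)}$ lies in $f_0^{(1)}(A^{(1)})$ if and only if the preimage under $g_1^{(1)}$ lies in $f_1^{(1)}(A^{(1)})$; this follows from injectivity of the $g_t^{(1)}$ together with the amalgamation-square commutativity, and it is exactly this dichotomy that cleanly splits the ``both images'' situation into an AP-in-$\KK_0$ sub-case and a JEP-in-$\KK_0$ sub-case, clarifying why JEP of $\KK_0$ is exactly the additional hypothesis one needs beyond AP for the two classes.
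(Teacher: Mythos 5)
Your proposal is correct and follows essentially the same route as the paper's proof: for the lexicographic product, decompose the embeddings via the quotient/fiber structure, amalgamate first in $\KK_1$, and then fill in the fibers over $C^1$ by the same four-case analysis (arbitrary fiber, copied fiber, JEP in $\KK_0$ over the "both images but outside the pushforward of $A$" points, AP in $\KK_0$ over the pushforward of $A$); for the full product, decompose coordinatewise using hereditarity to take minimal factors and amalgamate in each coordinate separately. Your closing observation about the dichotomy for $c$ in both images (which follows from $g_0^1 \circ f_0^1 = g_1^1 \circ f_1^1$ and injectivity of embeddings) is exactly the point that makes the paper's case split well defined.
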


First, we exhibit a lemma about decomposing embeddings between lexicographic products.  This is a generalization of Lemma 2.13 of \cite{Meir22}.

\begin{lemma}\label{Lemma_WreathEmbeddings}
	Fix relational languages $L_0$ and $L_1$ and let $L_2$ be the language of the lexicographic product.  Let $B$ and $D$ be $L_1$-structures.  For each $b \in B$, let $A_b$ be a non-empty $L_0$-structure, and, for each $d \in D$, let $C_d$ be a non-empty $L_0$-structure.  For all $f \in \emb_{L_2}\left( \bigsqcup_{b \in B} A_b , \bigsqcup_{d \in D} C_d \right)$, there exists $g \in \emb_{L_1}(B, D)$ and, for each $b \in B$, $h_{g(b)} \in \emb_{L_0}(A_b, C_{g(b)})$ such that, for all $b \in B$ and $a \in A_b$,
	\[
		f(a,b) = (h_{g(b)}(a), g(b)).
	\]
	Moreover, for any such $g$ and $(h_{g(b)} : b \in B)$, constructing $f$ in this manner gives an $L_2$-embedding from $\bigsqcup_{b \in B} A_b$ to $\bigsqcup_{d \in D} C_d$.
\end{lemma}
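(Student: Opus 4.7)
The plan is to use the distinguished binary relation $E$ in $L_2$ as a ``fiber detector'': in any lexicographic product $\bigsqcup_{b \in B} A_b$, the $E$-classes are precisely the fibers $\{(a,b) : a \in A_b\}$ for $b \in B$. Since $f$ is an $L_2$-embedding, it must preserve $E$ and $\neg E$, so $f$ sends each fiber over $B$ into a single fiber over $D$. This immediately produces the base map $g$ and then the fiber maps $h_{g(b)}$.

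First I would define $g : B \to D$ as follows: for each $b \in B$, use non-emptiness of $A_b$ to pick some $a \in A_b$, and let $g(b)$ be the second coordinate of $f(a,b)$. This is well-defined because if $a, a' \in A_b$ then $E((a,b),(a',b))$ holds, hence so does $E(f(a,b),f(a',b))$, forcing the two second coordinates of $f(a,b)$ and $f(a',b)$ to coincide. For injectivity of $g$: if $b \neq b'$, pick $a \in A_b$ and $a' \in A_{b'}$; then $\neg E((a,b),(a',b'))$, so $\neg E(f(a,b),f(a',b'))$, whence $g(b) \neq g(b')$. For $g$ to be an $L_1$-embedding, fix $R \in \sig(L_1)$ of arity $n$ and elements $b_0,\dots,b_{n-1} \in B$, choose any $a_i \in A_{b_i}$, and apply clause (2) of Definition \ref{Definition_Wreath} on both sides: $B \models R(b_0,\dots,b_{n-1})$ iff $\bigsqcup_{b\in B} A_b \models R((a_0,b_0),\dots,(a_{n-1},b_{n-1}))$ iff the image under $f$ satisfies $R$ iff $D \models R(g(b_0),\dots,g(b_{n-1}))$.

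Next, for each $b \in B$ and $a \in A_b$, define $h_{g(b)}(a)$ to be the first coordinate of $f(a,b)$, so that $f(a,b) = (h_{g(b)}(a), g(b))$ by construction. Injectivity of $h_{g(b)}$ is immediate from injectivity of $f$ since the second coordinate is constant on the fiber over $b$. To see $h_{g(b)}$ is an $L_0$-embedding, fix $R \in \sig(L_0)$ of arity $n$ and $a_0,\dots,a_{n-1} \in A_b$; by clause (1) of Definition \ref{Definition_Wreath}, $A_b \models R(a_0,\dots,a_{n-1})$ iff the tuple of $(a_i,b)$'s satisfies $R$ in the lexicographic product, iff (applying $f$) the tuple $((h_{g(b)}(a_i),g(b)))_i$ satisfies $R$ in the target, which by clause (1) applied in the target is equivalent to $C_{g(b)} \models R(h_{g(b)}(a_0),\dots,h_{g(b)}(a_{n-1}))$.

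For the ``moreover'' direction, suppose $g \in \emb_{L_1}(B,D)$ and $h_{g(b)} \in \emb_{L_0}(A_b, C_{g(b)})$ are given, and define $f(a,b) := (h_{g(b)}(a), g(b))$. Injectivity is immediate from injectivity of $g$ and of each $h_{g(b)}$. To verify that $f$ preserves each relation, split into the three cases of Definition \ref{Definition_Wreath}: the $L_0$-case follows because in each lexicographic product an $L_0$-relation forces the second coordinates to be equal, and then reduces to the fact that $h_{g(b)}$ is an $L_0$-embedding; the $L_1$-case reduces to $g$ being an $L_1$-embedding; and the $E$-case reduces to the fact that $g$ is injective, so $b_0 = b_1$ iff $g(b_0) = g(b_1)$. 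The main (minor) obstacle is simply bookkeeping the three relation classes cleanly, together with the preliminary check that the $E$-classes of the product are exactly the fibers, which is forced by non-emptiness of each $A_b$.
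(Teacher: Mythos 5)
Your proof is correct. The paper does not actually supply a proof of this lemma (it is stated as a generalization of Lemma 2.13 of the cited work of Meir and used immediately in the proof of Proposition \ref{Proposition_WreathDirectAP}), but your argument is exactly the intended one: the relation $E$ recovers the fiber decomposition, well-definedness and injectivity of the base map $g$ follow from $f$ preserving $E$ and $\neg E$, the three clauses of Definition \ref{Definition_Wreath} handle the relation-preservation checks in both directions, and injectivity of $g$ is what makes the $E$-clause and the ``second coordinates must agree'' part of the $L_0$-clause go through in the converse (``moreover'') direction. No gaps.
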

	
\begin{proof}[Proof of Proposition~\ref{Proposition_WreathDirectAP}]
	(1): Fix $A, B_0, B_1 \in \KK_0 \wreath \KK_1$ and embeddings $f_0 \in \emb(A, B_0)$ and $f_1 \in \emb(A, B_1)$.  Then, there exists $A^1, B^1_0, B^1_1 \in \KK_1$, $A^0_a \in \KK_0$ for each $a \in A^1$, $B^0_{t,b} \in \KK_0$ for each $t < 2$ and $b \in B^1_t$ such that $A = \bigsqcup_{a \in A^1} A^0_a$ and $B_t = \bigsqcup_{b \in B^1_t} B^0_{t,b}$ for each $t < 2$.  Moreover, by Lemma \ref{Lemma_WreathEmbeddings}, there exists $f^1_t \in \emb(A^1, B^1_t)$ for each $t < 2$ and $f^0_{t,a} \in \emb(A^0_a, B^0_{t,f^1_t(a)})$ for each $t < 2$ and $a \in A^1$ such that, for all $a \in A^1$ and $a^* \in A^0_a$, $f_t(a^*,a) = (f^0_{t,a}(a^*), f^1_t(a))$.  Since $\KK_1$ has the amalgamation property, there exists $C^1 \in \KK_1$ and $g^1_t \in \emb(B^1_t, C^1)$ for each $t < 2$ such that $g^1_0 \circ f^1_0 = g^1_1 \circ f^1_1$.  For each $c \in C^1$, we have four cases:
	\begin{itemize}
		\item If $c \in C^1 \setminus \left( \bigcup_{t < 2} g^1_t(B^1_t) \right)$, then set $C^0_c$ to be any element of $\KK_0$.
		\item If, for some $t < 2$, $c \in g^1_t(B^1_t) \setminus g^1_{1-t}(B^1_{1-t})$, then set $C^0_c = B^0_b$ for any choice of $b \in B^1_t$ such that $g^1_t(b) = c$ and let $g^0_{t,b}$ be the identity embedding on $B^0_b$.
		\item If $c \in (g^1_0(B^1_0) \cap g^1_1(B^1_1)) \setminus g^1_0(f^1_0(A^1))$, then let $C^0_c \in \KK_0$ and let $g^0_{t,b_t} \in \emb(B^0_{t,b_1}, C^0_c)$ for each $t < 2$, where $b_t \in B^1_t$ is such that $c = g^1_t(b_t)$.  This exists since $\KK_0$ has the joint embedding property.
		\item If $c \in g^1_0(f^1_0(A^1))$, then let $C^0_c \in \KK_0$ and $g^0_{t,f^1_t(a)} \in \emb(B^0_{t,f^1_t(a)})$ for each $t < 2$ such that
		\[
			g^0_{0,f^1_0(a)} \circ f^0_{0,a} = g^0_{1,f^1_1(a)} \circ f^0_{1,a},
		\]
		where $a \in A^1$ is such that $c = g^1_0(f^1_0(a))$.  This exists since $\KK_0$ has the amalgamation property.
	\end{itemize}
	Set $C = \bigsqcup_{c \in C^1} C^0_c$ and, for each $t < 2$, define $g_t : B_t \rightarrow C$ by setting, for all $b \in B^1_t$ and $b^* \in B^0_{t,b}$,
	\[
		g_t(b^*, b) = (g^0_{t,b}(b^*), g^1_t(b)).
	\]
	One can see that $g_t \in \emb(B_t, C)$ for each $t < 2$ and that $g_0 \circ f_0 = g_1 \circ f_1$.
	
	(2): Fix $A, B_0, B_1 \in \KK_0 \btimes \KK_1$ and embeddings $f_0 \in \emb(A, B_0)$ and $f_1 \in \emb(A, B_1)$.  Then, there exists $A^0, B^0_0, B^0_1 \in \KK_0$ and $A^1, B^1_0, B^1_1 \in \KK_1$ such that $A \subseteq A^0 \btimes A^1$ and $B_t \subseteq B^0_t \btimes B^1_t$ for each $t < 2$.  Moreover, choose these structures minimal such.  By Lemma \ref{Lemma_DirectEmbeddings}, there exists $f^s_t \in \emb(A^s, B^s_t)$ for each $t, s < 2$ such that, for all $a_0 \in A^0$ and $a_1 \in A^1$, $f_t(a_0, a_1) = (f^0_t(a_0), f^1_t(a_1))$ for each $t < 2$.  For each $s < 2$, since $\KK_s$ has the amalgamation property, there exists $C^s \in \KK_s$ and $g^s_t \in \emb(B^s_t, C^s)$ for each $t < 2$ such that $g^s_0 \circ f^s_0 = g^s_1 \circ f^s_1$.  Let $C = C^0 \btimes C^1$ and, for each $t < 2$, define $g_t : B_t \rightarrow C$ by setting, for $b_0 \in B^0_t$ and $b_1 \in B^1_t$,
	\[
		g_t(b_0, b_1) = (g^0_t(b_0), g^1_t(b_1)).
	\]
	One can see that $g_t \in \emb(B_t, C)$ for each $t < 2$ and that $g_0 \circ f_0 = g_1 \circ f_1$.
\end{proof}

\subsection{\AgeIndivisibility\ and Products of Classes}

\Ageindivisibility\ is preserved under the \wreathproduct\ and under the \directproduct\ (with some additional assumptions).

\begin{theorem}\label{Theorem_WreathAgeInd}
	For each $t < 2$, let $L_t$ be a relational language and let $\KK_t$ be a class of finite $L_t$-structures closed under isomorphism.  If $\KK_0$ has the joint embedding property, $\KK_0$ is \ageindivisible, and $\KK_1$ is \ageindivisible, then $\KK_0 \wreath \KK_1$ is \ageindivisible.
\end{theorem}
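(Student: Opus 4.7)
The plan is to reduce the problem to finding monochromatic copies of a ``pure'' lexicographic product $A^* \wreath A^1$, and then use a two-stage Ramsey-style argument applying indivisibility of $\KK_0$ fiberwise and of $\KK_1$ on the fiber-colors.

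Fix $A \in \KK_0 \wreath \KK_1$ and $k \geq 2$. By definition, we may write $A = \bigsqcup_{a \in A^1} A^0_a$ for some $A^1 \in \KK_1$ and $A^0_a \in \KK_0$. First I would apply the joint embedding property of $\KK_0$ finitely many times to produce a single $A^* \in \KK_0$ that admits an $L_0$-embedding from every $A^0_a$ (this is where the JEP hypothesis on $\KK_0$ is used). This gives an $L_2$-embedding $A \hookrightarrow A^* \wreath A^1$, so it suffices to find, for some $D \in \KK_0 \wreath \KK_1$, a monochromatic copy of $A^* \wreath A^1$ inside $D$ under any $k$-coloring.

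Next, by \ageindivisibility\ of $\KK_0$ applied to $A^*$ and $k$, pick $B^* \in \KK_0$ such that every $k$-coloring of $B^*$ admits a monochromatic $L_0$-embedded copy of $A^*$. By \ageindivisibility\ of $\KK_1$ applied to $A^1$ and $k$, pick $B^1 \in \KK_1$ such that every $k$-coloring of $B^1$ admits a monochromatic $L_1$-embedded copy of $A^1$. Set $D = B^* \wreath B^1 \in \KK_0 \wreath \KK_1$. Given any $c : D \to k$, for each $b \in B^1$ the restriction of $c$ to the fiber $B^* \times \{b\}$ is a $k$-coloring of a copy of $B^*$; choose an $L_0$-embedding $\varphi_b : A^* \to B^*$ whose image, shifted to the fiber, is monochromatic of some color $\chi(b) < k$. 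This defines $\chi : B^1 \to k$, and by the choice of $B^1$ there is $\psi \in \emb_{L_1}(A^1, B^1)$ and $i < k$ such that $\chi(\psi(a)) = i$ for every $a \in A^1$.

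Finally I would verify that the set $S = \{(\varphi_{\psi(a)}(x), \psi(a)) : a \in A^1,\ x \in A^*\} \subseteq D$, as an $L_2$-substructure, is isomorphic to $A^* \wreath A^1$ via $(x,a) \mapsto (\varphi_{\psi(a)}(x),\psi(a))$, and that $c$ is constantly $i$ on $S$. Checking isomorphism amounts to a direct reading of Definition \ref{Definition_Wreath}: the $E$-relation matches because $\psi$ is injective; the $L_0$-relations match because each $\varphi_{\psi(a)}$ is an $L_0$-embedding and, in both $A^* \wreath A^1$ and $D$, $L_0$-relations only hold within a single fiber; the $L_1$-relations match because $\psi$ is an $L_1$-embedding. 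Since $A \hookrightarrow A^* \wreath A^1 \cong S$ and $c$ is constant on $S$, this yields the required monochromatic copy of $A$ in $D$. The main (though minor) obstacle is step 5, namely bookkeeping to confirm that the chosen set $S$ really induces the lexicographic product structure rather than something weaker; the JEP hypothesis on $\KK_0$ is essential in step 2 because without it there is no single $A^*$ over which to apply \ageindivisibility\ uniformly across the fibers.
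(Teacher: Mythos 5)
Your proposal is correct and follows essentially the same route as the paper's proof: use JEP of $\KK_0$ to replace all fibers by a single structure, apply \ageindivisibility\ of $\KK_0$ fiberwise to induce a coloring of the index set, and then apply \ageindivisibility\ of $\KK_1$ to that induced coloring. The only cosmetic difference is that you first pass to a monochromatic copy of $A^* \wreath A^1$ and then embed $A$ into it, whereas the paper composes the fiber embeddings directly into the final map; the verification steps are identical.
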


\begin{proof}
    Fix $A \in \KK_0 \wreath \KK_1$ and $k < \omega$.  Therefore, there exists $B \in \KK_1$ and $D_b \in \KK_0$ for all $b \in B$ such that $A = \bigsqcup_{b \in B} D_b$.  Since $\KK_0$ has the joint embedding property and $B$ is finite, there exists $D \in \KK_0$ and $f_b \in \emb_{L_0}(D_b, D)$ for all $b \in B$.  Since $\KK_0$ is \ageindivisible, there exists $D' \in \KK_0$ such that, for all $c : D' \rightarrow k$, there exists an $g \in \emb_{L_0}(D, D')$ such that $|c(g(D))| = 1$.  Since $\KK_1$ is \ageindivisible, there exists $B' \in \KK_1$ such that, for all $c : B' \rightarrow k$, there exists $h \in \emb_{L_1}(B, B')$ such that $|c(h(B))| = 1$.  Let $A' = D' \wreath B'$.  We claim that this works.
    
    Fix $c : A' \rightarrow k$.  For each $b \in B'$, let $c_b : D' \rightarrow k$ be given by, for all $d \in D'$, $c_b(d) = c(d,b)$.  Therefore, there exists $g_b \in \emb_{L_0}(D, D')$ such that $|c_b(g_b(D))| = 1$.  Let $t_b < k$ be such that $c_b(g_b(D)) = \{ t_b \}$.  Define $c' : B' \rightarrow k$ by setting, for each $b \in B'$, $c'(b) = t_b$.  By construction, there exists $h \in \emb_{L_1}(B, B')$ such that $|c'(h(B))| = 1$.  Let $t < k$ be such that $c'(h(B)) = \{ t \}$.  Finally, define $j : A \rightarrow A'$ by setting, for each $b \in B$ and $d \in D_b$,
    \[
        j(d,b) = (g_{h(b)}(f_b(d)), h(b)).
    \]
	We claim that $j \in \emb(A,A')$ and that $|c(j(A))| = 1$.
        
	Fix $b \in B$ and $d \in D_b$.  Then,
	\begin{align*}
		c(j(d,b)) = & \ c(g_{h(b)}(f_b(d)), h(b)) = \\ & \ c_{h(b)}(g_{h(b)}(f_b(d))) = t_{h(b)} = c'(h(b)) = t.
	\end{align*}
	Therefore, $c(j(A)) = \{ t \}$.
	\if\papermode0
		Next, fix $R \in \sig(L_0)$ of arity $n$, $\ob \in B^n$, and $\od$ such that $d_i \in D_{b_i}$ for all $i < n$.  Then,
        \begin{align*}
            A \models R(\od, \ob) & \Longleftrightarrow b_i = b_j \text{ for all $i, j$ and } D_{b_0} \models R(\od) \\ & \Longleftrightarrow h(b_i) = h(b_j) \text{ for all $i, j$ and } D' \models R(g_{h(b_0)}(f_{b_0}(\od))) \\ & \Longleftrightarrow A' \models R(j(\od, \ob)).
        \end{align*}
        Fix $R \in \sig(L_1)$ of arity $n$, $\ob \in B^n$, and $\od$ such that $d_i \in D_{b_i}$ for all $i < n$.  Then,
        \begin{align*}
            A \models R(\od, \ob) & \Longleftrightarrow B \models R(\ob) \\ & \Longleftrightarrow B' \models R(h(\ob)) \Longleftrightarrow A' \models R(j(\od, \ob)).
        \end{align*}
        Finally, for all $b_0, b_1 \in B$, $d_0 \in D_{b_0}$, and $d_1 \in D_{b_1}$,
        \begin{align*}
            A \models E(d_0, b_0, d_1, b_1) & \Longleftrightarrow b_0 = b_1 \Longleftrightarrow h(b_0) = h(b_1) \\ & \Longleftrightarrow D \models E(j(d_0, b_0), j(d_1, b_1)).
        \end{align*}
        Therefore, $j$ is an embedding.  This completes the proof.
    \else
        It is easy to see that $j \in \emb(A,A')$.
    \fi
\end{proof}

\begin{example}
	In Theorem \ref{Theorem_WreathAgeInd}, the assumption of $\KK_0$ having the joint embedding property is necessary.  For example, let $L_0$ be the language with two binary relation symbols, $R_0$ and $R_1$, and let $L_1$ be the language with empty signature.  Let $\KK_0$ be the class of all finite $L_0$-structures $A$ where $R_0$ and $R_1$ are symmetric and irreflexive on $A$ and
	\[
		A \models \neg (\exists x, y, z, w)( R_0(x,y) \wedge R_1(z,w)).
	\]
	In other words, $\KK_0$ is the union of the class of all finite $R_0$-graphs and the class of all finite $R_1$-graphs.  Let $\KK_1$ be the class of all finite sets.  Then, it is clear that $\KK_0$ does not have the joint embedding property.  Moreover, $\KK_0$ is \ageindivisible\ (because $\GG$ is \ageindivisible) and $\KK_1$ is \ageindivisible.  However, $\KK_0 \wreath \KK_1$ is not \ageindivisible.
	
	Take $A \in \KK_0 \wreath \KK_1$ where $A$ is composed of two $E$-classes.  The first $E$-class consists of an $R_0$-path of length $1$ and the other $E$-class consists of an $R_1$-path of length $1$.  Then, it is easy to check that, for any $B \in \KK_0 \wreath \KK_1$, if we color $c : B \rightarrow 2$ by setting $c(b) = 0$ if and only if $b$ belongs to an $E$-class with an $R_0$-edge, then there exists no $f \in \emb(A,B)$ such that $|c(f(A))| = 1$.
\end{example}

\begin{theorem}\label{Theorem_DirectAgeInd}
    For each $t < 2$, let $L_t$ be a relational language and let $\KK_t$ be a class of finite $L_t$-structures closed under isomorphism.  If $\KK_0$ and $\KK_1$ are \ageindivisible, then so is $\KK_0 \btimes \KK_1$.
\end{theorem}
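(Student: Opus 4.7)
The plan is to run a standard product-Ramsey iteration: apply indivisibility in $\KK_0$ first (on $k$ colors) and then in $\KK_1$ on sufficiently many colors that a single application can pre-process all possible column colorings at once. Fix $A \in \KK_0 \btimes \KK_1$ and $k \ge 2$. By definition of $\btimes$ on classes, $A \subseteq A_0 \btimes A_1$ for some $A_0 \in \KK_0$ and $A_1 \in \KK_1$, so it suffices to produce an element of $\KK_0 \btimes \KK_1$ that is $k$-indivisible for $A_0 \btimes A_1$.

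First I would apply indivisibility of $\KK_0$ to the pair $(A_0, k)$ to obtain $B_0 \in \KK_0$ such that every $c : B_0 \rightarrow k$ admits an $f_0 \in \emb(A_0, B_0)$ with $|c(f_0(A_0))| = 1$. Then set $N = k^{|B_0|}$ and apply indivisibility of $\KK_1$ to $(A_1, N)$, obtaining $B_1 \in \KK_1$ with the analogous monochromatic embedding property for colorings with $N$ colors. The candidate witness is $B := B_0 \btimes B_1$, which belongs to $\KK_0 \btimes \KK_1$ by definition.

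To verify the claim, let $c : B_0 \btimes B_1 \rightarrow k$ be arbitrary. For each $b \in B_1$ define the column coloring $c_b : B_0 \rightarrow k$ by $c_b(a) = c(a, b)$, and let $\chi : B_1 \rightarrow k^{B_0}$ send $b$ to $c_b$. Since the codomain of $\chi$ has size $N$, the choice of $B_1$ yields $f_1 \in \emb(A_1, B_1)$ on which $\chi$ is constant; that is, there is a single map $c^* : B_0 \rightarrow k$ with $c_{f_1(a_1)} = c^*$ for every $a_1 \in A_1$. Applying the choice of $B_0$ to $c^*$ yields $f_0 \in \emb(A_0, B_0)$ with $c^* \circ f_0$ constant. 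By Lemma \ref{Lemma_DirectEmbeddings}, the map $(a_0, a_1) \mapsto (f_0(a_0), f_1(a_1))$ is an embedding of $A_0 \btimes A_1$ into $B$, and by construction $c$ is constant on its image; precomposing with the inclusion $A \hookrightarrow A_0 \btimes A_1$ gives the required monochromatic embedded copy of $A$.

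There is no genuine obstacle, only a small bookkeeping point: $B_0$ must be chosen before $B_1$ so that $N = k^{|B_0|}$ is well-defined when indivisibility of $\KK_1$ is invoked. In particular, unlike the lexicographic case of Theorem \ref{Theorem_WreathAgeInd}, no joint embedding or amalgamation hypothesis on either class is needed, which reflects the fact that $\btimes$ on classes is defined via \emph{sub}structures of products rather than via gluing fibers.
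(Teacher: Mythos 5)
Your proof is correct and follows essentially the same two-step argument as the paper: fix the $\KK_0$-witness $B_0$ first, then apply $\ageindivisibility$ of $\KK_1$ with an enlarged finite color set to stabilize the columns, and take $B_0 \btimes B_1$ as the witness. The only cosmetic difference is that you color $B_1$ by entire column colorings (a set of size $k^{|B_0|}$), whereas the paper colors by the pair (monochromatic color, chosen embedding), i.e., by $k \times \emb_{L_0}(D, D')$; both are finite and the argument is otherwise identical, including the observation that no joint embedding hypothesis is needed.
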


\begin{proof}
    Fix $A \in \KK_0 \btimes \KK_1$ and $k < \omega$.  So there exists $D \in \KK_0$ and $B \in \KK_1$ such that $A \subseteq D \btimes B$.  Since $\KK_0$ is \ageindivisible, there exists $D' \in \KK_0$ such that, for all $c : D' \rightarrow k$, there exists $f \in \emb_{L_0}(D,D')$ such that $|c(f(D))| = 1$.  Since $\KK_1$ is \ageindivisible, there exists $B'$ such that, for all $c : B' \rightarrow k \times \emb_{L_0}(D, D')$, there exists $f \in \emb_{L_1}(B, B')$ such that $|c(f(B))| = 1$.  Let $A' = D' \btimes B'$.  Clearly $A' \in \KK_0 \btimes \KK_1$.  We claim that this works.
    
    Let $c : A' \rightarrow k$.  For each $b \in B'$, consider the coloring $c_b : D' \rightarrow k$ given by, for all $d \in D'$, $c_b(d) = c(d,b)$.  By assumption, there exists $f_b \in \emb_{L_0}(D, D')$ such that $|c_b(f_b(D))| = 1$.  Let $t_b < k$ be such that $c_b(f_b(D)) = \{ t_b \}$.  Next, define $c' : B' \rightarrow k \times \emb_{L_0}(D, D')$ by setting, for all $b \in B'$, $c'(b) = (t_b, f_b)$.  By assumption, there exists $g \in \emb_{L_1}(B, B')$ such that $|c'(g(B))| = 1$.  That is, there exists $f \in \emb_{L_0}(D,D')$ and $t < k$ such that, for all $b \in B$, $c'(g(b)) = (t, f)$.  Let $h : A \rightarrow A'$ be given by, for all $(d,b) \in A$,
    \[
    	h(d,b) = (f(d), g(b)).
    \]
    Clearly $h \in \emb(A, A')$.  Moreover, for each $(d,b) \in A$, $c'(g(b)) = (t, f)$, thus $t_{g(b)} = t$ and $f_{g(b)} = f$.  Therefore,
    \begin{align*}
        c(h(d,b)) = & \ c(f(d), g(b)) = c_{g(b)}(f(d)) = \\ & \ c_{g(b)}(f_{g(b)}(d)) = t_{g(b)} = t.
    \end{align*}
    Therefore, $c(h(A)) = \{ t \}$.
\end{proof}

Together Theorem \ref{Theorem_WreathAgeInd} and Theorem \ref{Theorem_DirectAgeInd} say that, at least when the classes under consideration have the joint embedding property (e.g., are the ages of some structure), then the \wreathproduct\ and the \directproduct\ preserve \ageindivisibility.  Whether or not this holds true for the free superposition is an open question.

\begin{question}\label{Question_FreeAgeInd}
	For each $t < 2$, let $L_t$ be a relational language and let $\KK_t$ be a class of finite $L_t$-structures.  Under what conditions on $\KK_0$ and $\KK_1$ do we have that, if $\KK_0$ and $\KK_1$ are \ageindivisible, then so is $\KK_0 * \KK_1$?
\end{question}

Free superposition does not preserve the indivisibility of \fraisse~limits.

\begin{example}\label{Example_FreeIndivis}
	Consider $\EE$, the class of all finite sets with a single equivalence relation.  Even though $\Flim(\EE)$ is indivisible (and $\EE * \EE$ is a \fraisse~class), $\Flim(\EE * \EE)$ is not indivisible.  See, for instance, Example 3.19 in \cite{GuiPar}.
\end{example}

\subsection{Definable Self-Similarity and Products of Classes}\label{Subsection_DSS}

Neither \wreathproduct s nor \directproduct s preserve definable self-similarity.

\begin{example}\label{Example_NotDefSelfSim}
	Let $\KK$ be the class of all finite sets with no structure.  It is easy to verify that $\KK$ is definably self-similar.  However, both $\KK \wreath \KK$ and $\KK \btimes \KK$ are not definably self-similar.
	
	For $\KK \wreath \KK$, let $A$ be a singleton, let $B$ be contain two $E$-inequivalent elements, and let $C$ contain two $E$-equivalent elements.  Take $C_0$ to be a singleton and $c$ to be the element of $C$ not in $C_0$.  Let $f$ be any embedding of $A$ into $B$ and let $g$ be an embedding of $A$ into $\{ c \}$.  Then, it is easy to check that there is no $D$, $j \in \emb(C,D)$, and $h \in \emb(B, \qfclass{j(c)}{D}{j(C_0)})$ such that $h \circ f = j \circ g$.
	
	For $\KK \btimes \KK$, let $A$ be a singleton, let $B = \{ b_0, b_1 \}$ with $B \models E_0(b_0, b_1) \wedge \neg E_1(b_0, b_1)$, and let $C = \{ c, c_0 \}$ with $C \models \neg E_0(c, c_0) \wedge E_1(c, c_0)$.  Let $f$ be the embedding of $A$ into $\{ b_0 \}$ and $g$ be the embedding of $A$ into $\{ c \}$.  Then, it is easy to check that there is no $D$, $j \in \emb(C,D)$, and $h \in \emb(B,\qfclass{j(c)}{D}{\{j(c_0)\}})$ such that $h \circ f = j \circ g$.
\end{example}

As long as $\KK$ has the hereditary property, free superposition preserves definable self-similarity.

\begin{proposition}[Proposition 3.16 of \cite{GuiPar}]\label{Proposition_FreeDefSelfSim}
	For each $t < 2$, let $L_t$ be a relational language and let $\KK_t$ be a class of finite $L_t$-structures with the hereditary property.  If $\KK_0$ and $\KK_1$ are definably self-similar, then so is $\KK_0 * \KK_1$.
\end{proposition}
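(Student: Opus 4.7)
The plan is to verify the definition of definable self-similarity for $\KK_0 * \KK_1$ by reducing to separate instances for $\KK_0$ and $\KK_1$ via $L_t$-reducts, invoking definable self-similarity for each $\KK_t$, and then gluing the results into a single structure in $\KK_0 * \KK_1$.

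Fixing the witness data $A, B, C \in \KK_0 * \KK_1$, $f \in \emb(A,B)$, $C_0 \subseteq C$, $c \in C \setminus C_0$, and $g \in \emb(A, \qfclass{c}{C}{C_0})$, I would first pass to the $L_t$-reducts for each $t < 2$. Since the signatures of $L_0$ and $L_1$ are disjoint, a qf-type over $L_0 \cup L_1$ decomposes as the union of its $L_0$- and $L_1$-qf-types, so $A|_{L_t}, B|_{L_t}, C|_{L_t} \in \KK_t$, the maps $f$ and $g$ remain $L_t$-embeddings, the image of $g$ lies in $\qfclass{c}{C|_{L_t}}{C_0|_{L_t}}$, and $c \notin C_0|_{L_t}$. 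Applying definable self-similarity of $\KK_t$ then yields $D_t \in \KK_t$, $j_t \in \emb_{L_t}(C|_{L_t}, D_t)$, and $h_t \in \emb_{L_t}(B|_{L_t}, \qfclass{j_t(c)}{D_t}{j_t(C_0|_{L_t})})$ with $h_t \circ f = j_t \circ g$.

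Next I would use the hereditary property of $\KK_t$ to shrink $D_t$ to the induced $L_t$-substructure on $j_t(C) \cup h_t(B)$; this preserves all the witness maps while keeping $D_t$ in $\KK_t$. The goal from here is to put an $L_0 \cup L_1$-structure on a single underlying set $D$ whose $L_0$- and $L_1$-reducts come from $D_0$ and $D_1$ respectively, so that $D \in \KK_0 * \KK_1$. Provided the intersections $j_t(C) \cap h_t(B)$ coincide with the forced overlap $j_t(g(A))$ for both $t$, the universes of $D_0$ and $D_1$ are each canonically in bijection with the set-theoretic pushout $P := (C \sqcup B)/(f(a) \sim g(a))_{a \in A}$; I would then take $D$ to have universe $P$ with $L_t$-reduct transported from $D_t$. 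The natural inclusions $j : C \to D$ and $h : B \to D$ would give $L_0 \cup L_1$-embeddings satisfying $h \circ f = j \circ g$, and $h(B) \subseteq \qfclass{j(c)}{D}{j(C_0)}$ would follow from the corresponding $L_t$-statement for each $t$.

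The main obstacle is arranging the minimal-overlap condition $j_t(C) \cap h_t(B) = j_t(g(A))$ in both reducts, since the bare definition of definable self-similarity only forces the containment $\supseteq$. My first approach to this would be a refinement argument: given a witness with extra overlap, apply the hereditary property together with a further application of definable self-similarity to an enlarged target in order to separate any offending pairs in $j_t(C \setminus g(A)) \cap h_t(B \setminus f(A))$. If such a refinement is unavailable in the stated generality, the fallback is to avoid the pushout $P$ altogether and instead construct $D$ as a quotient of some larger auxiliary structure in $\KK_0 * \KK_1$ in which the two reducts $D_0$ and $D_1$ are both embedded compatibly; this is more delicate and is where I would expect the bulk of the technical work to lie.
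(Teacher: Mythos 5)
Your strategy---pass to the $L_t$-reducts, apply definable self-similarity of each $\KK_t$, prune $D_t$ to $j_t(C)\cup h_t(B)$ by hereditariness, and glue the two reducts along a bijection of universes---is exactly the paper's, and you have correctly isolated the one point where it can fail: the two pruned witnesses $D_0$ and $D_1$ must have \emph{matching} overlaps $j_t(C)\cap h_t(B)$ before their universes can be identified. But you stop at naming the obstacle, and neither of your proposed repairs closes it in the stated generality: a ``refinement'' that separates offending pairs, or a quotient of a larger auxiliary structure, would require some form of amalgamation or closure under quotients, and only the hereditary property is assumed.

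The missing idea is to normalize $C$ \emph{before} invoking definable self-similarity of the factors: since $\KK_0 * \KK_1$ inherits the hereditary property, the paper first reduces to the case $C = g(A) \cup C_0$. After this, the overlap condition you worry about is automatic. Indeed, $j_t(C) = j_t(g(A)) \cup j_t(C_0) = h_t(f(A)) \cup j_t(C_0)$, so after pruning, $D_t = h_t(B) \cup j_t(C_0)$, and this union is disjoint: $h_t(B)$ lies in $\qfclass{j_t(c)}{D_t}{j_t(C_0)}$, which meets $j_t(C_0)$ in nothing, since an element of $j_t(C_0)$ cannot be $\qfequiv{j_t(C_0)}$ to the point $j_t(c) \notin j_t(C_0)$ (the relevant map would fail to be injective). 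Hence $j_t(C)\cap h_t(B) = h_t(f(A)) = j_t(g(A))$ for both $t$, the bijection $k : D_0 \to D_1$ determined by $k \circ h_0 = h_1$ and $k \circ j_0 = j_1$ is well defined and canonical, and transporting the $L_1$-structure of $D_1$ to $D_0$ along $k$ produces the required $D \in \KK_0 * \KK_1$ with witnesses $j_0$ and $h_0$. Note also that half of your stated worry---collisions between $h_t(B)$ and $j_t(C_0)$---never arises, by the same disjointness; the only genuinely problematic collisions involve points of $C \setminus (g(A)\cup C_0)$, and the initial normalization removes exactly those points.
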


\if\papermode0
	\begin{proof}
		Let $A, B, C \in \KK_0 * \KK_1$, $f \in \emb_{L_2}(A,B)$, $p(x)$ a complete, quantifier-free, non-algebraic $1$-$L_2$-type over $C_0 \subseteq C$, and $g \in \emb_{L_2}(A,p(C))$.  Since $\KK_0 * \KK_1$ has the hereditary property, we may assume $C = g(A) \cup C_0$.  In particular, for each $t < 2$, $f$ is an $L_t$-embedding from $A |_{L_t}$ to $B |_{L_t}$, $p_t := p |_{L_t}$ is a complete, quantifier-free, non-algebraic $1$-$L_t$-type over $C_0$, and $g$ is an $L_t$-embedding from $A |_{L_t}$ to $p_t(C |_{L_t})$.  Since $\KK_t$ is definably self-similar, there exists $D_t \in \KK_t$, $j_t \in \emb_{L_t}( C |_{L_t}, D_t )$, and $h_t \in \emb_{L_t}(B |_{L_t} j_t(p_t)(D_t))$ such that $h_t \circ f = j_t \circ g$.  Since $\KK_t$ has the hereditary property, we may assume that $D_t = h_t(B) \cup j_t(C)$.  Thus, there exists a bijection $k : D_0 \rightarrow D_1$ such that $h_1 = k \circ h_0$ and $j_1 = k \circ j_0$.  Let $D$ be the $L_2$-structure on $D_0$ given via $k$.  It is easy to check that $j_0 \in \emb_{L_2}(C,D)$ and $h_0 \in \emb_{L_2}(B,j(p)(D))$.
	\end{proof}
\fi

Although Proposition 3.16 of \cite{GuiPar} assumes that $L_0$ and $L_1$ are finite and $\KK_0$ and $\KK_1$ are \fraisse~classes with the strong amalgamation property, these assumptions are clearly not used in the proof provided.  Moreover, the proof uses the definition of definable self-similarity given in this paper (i.e., the condition from Lemma 3.14 of \cite{GuiPar}), so no limit structure is needed.

\subsection{Disjoint $n$-amalgamation Property and Products of Classes}

The disjoint $n$-amalgamation property is preserved under free superposition for all $n$, is preserved under the \wreathproduct\ for only $n = 2$, and is preserved under the \directproduct\ for no $n$.

\begin{example}\label{Example_WreathDirectnAmalg}
	For $n \ge 3$, the disjoint $n$-amalgamation property is not preserved under \wreathproduct s or \directproduct s.  Indeed, if $\KK$ is the class of all finite sets with no structure, then clearly $\KK$ has the disjoint $n$-amalgamation property for all $n$.  However, $\KK \wreath \KK$ and $\KK \btimes \KK$ satisfy the conditions of Lemma \ref{Lemma_TransitiveNo3Amalg}.  Hence, they do not have the disjoint $n$-amalgamation property for any $n \ge 3$.
\end{example}

\begin{example}\label{Example_Direct2Amalg}
	The disjoint $2$-amalgamation property is not preserved under \directproduct s.  Let $\KK$ be the class of all finite sets with no structure and consider $\KK \btimes \KK$.  Let $A = \{ (0,0), (1,1) \}$ and $B_0 = B_1 = \{ (0,0), (1,1), (0,1) \}$, each equipped with the usual structure, and let $f_0 \in \emb(A, B_0)$ and $f_1 \in \emb(A, B_1)$ be the identity embeddings.  Then, there exists no $C \in \KK \btimes \KK$ and embeddings $g_0 \in \emb(B_0, C)$ and $g_1 \in \emb(B_1, C)$ such that $g_0 \circ f_0 = g_1 \circ f_1$ and $g_0(B_0) \cap g_1(B_1) = g_0(f_0(A))$ (there exists at most one element $c \in C$ such that $C \models E_0(c, g_0(f_0(0,0))) \wedge E_1(c, g_0(f_0(1,1)))$).
\end{example}

Though the disjoint $n$-amalgamation property is not preserved under \wreathproduct s for $n \ge 3$, it \emph{is} preserved when $n = 2$.  In other words, the strong amalgamation property is preserved under \wreathproduct s.

\begin{theorem}\label{Theorem_WreathStrongAmalg}
	For each $t < 2$, let $L_t$ be a relational language and let $\KK_t$ be a class of finite $L_t$-structures closed under isomorphism.  If $\KK_0$ and $\KK_1$ have the strong amalgamation property, then $\KK_0 \wreath \KK_1$ has the strong amalgamation property.
\end{theorem}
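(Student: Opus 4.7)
The plan is to follow the structure of the proof of Proposition~\ref{Proposition_WreathDirectAP}(1) but to upgrade every appeal to amalgamation to an appeal to strong amalgamation, then verify the resulting disjointness condition at the lexicographic level. Specifically, given $A, B_0, B_1 \in \KK_0 \wreath \KK_1$ and embeddings $f_0 \in \emb(A,B_0)$, $f_1 \in \emb(A,B_1)$, I first decompose each as $A = \bigsqcup_{a \in A^1} A^0_a$ and $B_t = \bigsqcup_{b \in B^1_t} B^0_{t,b}$ and, using Lemma~\ref{Lemma_WreathEmbeddings}, extract $f^1_t \in \emb(A^1, B^1_t)$ and $f^0_{t,a} \in \emb(A^0_a, B^0_{t, f^1_t(a)})$ so that $f_t(a^*,a) = (f^0_{t,a}(a^*), f^1_t(a))$.

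Next, I apply strong amalgamation in $\KK_1$ to produce $C^1 \in \KK_1$ and $g^1_t \in \emb(B^1_t, C^1)$ with $g^1_0 \circ f^1_0 = g^1_1 \circ f^1_1$ and, crucially, $g^1_0(B^1_0) \cap g^1_1(B^1_1) = g^1_0(f^1_0(A^1))$. This disjointness at the top level eliminates the troublesome third bullet of the amalgamation proof: no $c \in C^1$ lies in both images without lying in the image of $A^1$. For $c \in C^1$, I then define $C^0_c$ and the fiber maps $g^0_{t,b}$ by cases: if $c$ lies outside $\bigcup_t g^1_t(B^1_t)$, take any nonempty $C^0_c \in \KK_0$; if $c \in g^1_t(B^1_t) \setminus g^1_{1-t}(B^1_{1-t})$, let $C^0_c = B^0_{t,b}$ for the unique $b$ with $g^1_t(b)=c$ and take $g^0_{t,b}$ to be the identity; and if $c = g^1_0(f^1_0(a))$ for the (unique) $a \in A^1$, apply strong amalgamation in $\KK_0$ to $A^0_a, B^0_{0,f^1_0(a)}, B^0_{1,f^1_1(a)}$ along $f^0_{0,a}, f^0_{1,a}$ to obtain $C^0_c$ and $g^0_{t, f^1_t(a)}$ with $g^0_{0,f^1_0(a)}(B^0_{0,f^1_0(a)}) \cap g^0_{1,f^1_1(a)}(B^0_{1,f^1_1(a)}) = g^0_{0,f^1_0(a)}(f^0_{0,a}(A^0_a))$.

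I then set $C = \bigsqcup_{c \in C^1} C^0_c$ and define $g_t(b^*,b) = (g^0_{t,b}(b^*), g^1_t(b))$. That $g_t$ is an embedding and that $g_0 \circ f_0 = g_1 \circ f_1$ follows exactly as in Proposition~\ref{Proposition_WreathDirectAP}(1). The key new step is verifying $g_0(B_0) \cap g_1(B_1) \subseteq g_0(f_0(A))$ (the reverse inclusion follows from commutativity). If $(d,c) \in g_0(B_0) \cap g_1(B_1)$, then $c \in g^1_0(B^1_0) \cap g^1_1(B^1_1)$, so by the top-level strong amalgamation there is a unique $a \in A^1$ with $c = g^1_0(f^1_0(a)) = g^1_1(f^1_1(a))$; and if $(d,c) = (g^0_{0,f^1_0(a)}(b^*), c) = (g^0_{1,f^1_1(a)}(b'^*), c)$, then the fiber-level strong amalgamation produces $a^* \in A^0_a$ with $b^* = f^0_{0,a}(a^*)$ and $b'^* = f^0_{1,a}(a^*)$, giving $(d,c) = g_0(f_0(a^*,a))$.

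The main obstacle is purely bookkeeping: keeping track of which $c \in C^1$ falls into which case and making sure that the fiber choices in each case are compatible with the embeddings $g^1_t$. Once one observes that strong amalgamation in $\KK_1$ forces the ``bad'' case (where $c$ lies in both top-level images but over no point of $A^1$) to be vacuous, both the definitions and the disjointness verification proceed cleanly from the two hypotheses.
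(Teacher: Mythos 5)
Your proposal is correct and follows essentially the same route as the paper's proof: decompose via Lemma~\ref{Lemma_WreathEmbeddings}, apply strong amalgamation in $\KK_1$ at the top level so that the ``both images but not over $A^1$'' case becomes vacuous, apply strong amalgamation in $\KK_0$ fiberwise over points of $A^1$, and deduce the disjointness condition for $C = \bigsqcup_{c \in C^1} C^0_c$ exactly as you describe.
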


\begin{proof}
    \if\papermode0
    	Fix $A, B_0, B_1 \in \KK_0 \wreath \KK_1$ and embeddings $f_0 \in \emb(A, B_0)$ and $f_1 \in \emb(A, B_1)$.  Then, there exists $A^1, B^1_0, B^1_1 \in \KK_1$, $A^0_a \in \KK_0$ for each $a \in A^1$, $B^0_{t,b} \in \KK_0$ for each $t < 2$ and $b \in B^1_t$ such that $A = \bigsqcup_{a \in A^1} A^0_a$ and $B_t = \bigsqcup_{b \in B^1_t} B^0_{t,b}$ for each $t < 2$.  Moreover, by Lemma \ref{Lemma_WreathEmbeddings}, there exists $f^1_t \in \emb(A^1, B^1_t)$ for each $t < 2$ and $f^0_{t,a} \in \emb(A^0_a, B^0_{t,f^1_t(a)})$ for each $t < 2$ and $a \in A^1$ such that, for all $a \in A^1$ and $a^* \in A^0_a$, $f_t(a^*,a) = (f^0_{t,a}(a^*), f^1_t(a))$.  Since $\KK_1$ has the  strong amalgamation property, there exists $C^1 \in \KK_1$ and $g^1_t \in \emb(B^1_t, C^1)$ for each $t < 2$ such that
    	\[
	   	g^1_0 \circ f^1_0 = g^1_1 \circ f^1_1 \text{ and } g^1_0(B^1_0) \cap g^1_1(B^1_1) = g^1_0(f^1_0(A^1)).
    	\]
    	For each $c \in C^1$, we have three cases:
    	\begin{itemize}
    		\item If $c \in C^1 \setminus \left( \bigcup_{t < 2} g^1_t(B^1_t) \right)$, then set $C^0_c$ to be any element of $\KK_0$.
    		\item If, for some $t < 2$, $c \in g^1_t(B^1_t) \setminus g^1_{1-t}(B^1_{1-t})$, then set $C^0_c = B^0_b$ where $b \in B^1_t$ is such that $g^1_t(b) = c$ and let $g^0_{t,b}$ be the identity embedding on $B^0_b$.
    		\item If $c \in g^1_0(f^1_0(A^1))$, then let $C^0_c \in \KK_0$ and $g^0_{t,f^1_t(a)} \in \emb(B^0_{t,f^1_t(a)}, C^0_c)$ for each $t < 2$ such that
    			\begin{align*}
    				& g^0_{0,f^1_0(a)} \circ f^0_{0,a} = g^0_{1,f^1_1(a)} \circ f^0_{1,a}, \text{ and } \\ & g^0_{0,f^1_0(a)}(B^0_{0,f^1_0(a)}) \cap g^0_{1,f^1_1(a)}(B^0_{1,f^1_1(a)}) = g^0_{0,f^1_0(a)}(f^0_{0,a}(A^0_a)),
    			\end{align*}
    			where $a \in A^1$ is such that $c = g^1_0(f^1_0(a))$.  This exists since $\KK_0$ has the  strong amalgamation property.
	   \end{itemize}
    	Set $C = \bigsqcup_{c \in C^1} C^0_c$ and, for each $t < 2$, define $g_t : B_t \rightarrow C$ by setting, for all $b \in B^1_t$ and $b^* \in B^0_{t,b}$,
	   \[
    		g_t(b^*, b) = (g^0_{t,b}(b^*), g^1_t(b)).
	   \]
    	One can see that $g_t \in \emb(B_t, C)$ for each $t < 2$, that $g_0 \circ f_0 = g_1 \circ f_1$, and that $g_0(B_0) \cap g_1(B_1) = g_0(f_0(A))$.
    \else
        This is similar to the proof of Proposition \ref{Proposition_WreathDirectAP} (1), except we add the assumption that $g^1_0(B^1_0) \cap g^1_1(B^1_1) = g^1_0(f^1_0(A^1))$ (in the notation of the proof).  From this, we can infer that $g_0(B_0) \cap g_1(B_1) = g_0(f_0(A))$.
    \fi
\end{proof}

For all $n \ge 2$, the disjoint $n$-amalgamation property is preserved under free superposition (so long as the classes in question have the hereditary property).

\begin{theorem}\label{Theorem_FreenAmalgamation}
	For each $t < 2$, let $L_t$ be a relational language and let $\KK_t$ be a class of finite $L_t$-structures closed under isomorphism with the hereditary property.  For all $n \ge 2$, if $\KK_0$ and $\KK_1$ have the disjoint $n$-amalgamation property, then $\KK_0 * \KK_1$ has the disjoint $n$-amalgamation property.
\end{theorem}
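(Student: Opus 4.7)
The plan is to amalgamate separately in each language and then combine the results on a common underlying set.  Given a disjoint $(\Pow(n) \setminus \{n\})$-amalgamation system $(A_p, f_{p,q})$ in $\KK_0 * \KK_1$, observe that for each $t < 2$ the reducts $(A_p|_{L_t}, f_{p,q})$ form such a system in $\KK_t$, since every $L_2$-embedding is an $L_t$-embedding and the disjointness condition is purely set-theoretic.  I would apply the disjoint $n$-amalgamation property of $\KK_t$ separately for $t = 0$ and $t = 1$ to obtain $A_n^t \in \KK_t$ with embeddings $g_p^t : A_p|_{L_t} \to A_n^t$ for each $p \subsetneq n$.  Using the hereditary property of $\KK_t$, I would then restrict each $A_n^t$ to its substructure on $\bigcup_{p \subsetneq n} g_p^t(A_p)$, which remains in $\KK_t$ and still extends the original system to a disjoint $\Pow(n)$-amalgamation system.

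The crux is to identify the underlying sets of the trimmed $A_n^0$ and $A_n^1$.  I would show that both sets are in canonical bijection with the quotient of $\bigsqcup_{p \subsetneq n} |A_p|$ by the equivalence relation generated by $(a,p) \sim (f_{p,q}(a), q)$ whenever $p \subseteq q \subsetneq n$ and $a \in A_p$, where the bijection sends the class of $(a,p)$ to $g_p^t(a)$.  Surjectivity comes from the trimming step, and injectivity is a short computation using the disjointness equalities $g_p^t(A_p) \cap g_q^t(A_q) = g_{p \cap q}^t(A_{p \cap q})$ together with the injectivity of each $g_p^t$.

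Using this identification, place $A_n^0$ and $A_n^1$ on a common underlying set and let $A_n$ be the $L_2$-structure whose $L_t$-reduct is $A_n^t$ for each $t < 2$; then $A_n \in \KK_0 * \KK_1$ by definition.  Define $f_{p,n}$ to be the common value of $g_p^0 = g_p^1$ under the identification; each is simultaneously an $L_0$- and an $L_1$-embedding and hence an $L_2$-embedding.  The commutativity and disjointness conditions for the extended system transfer directly from either language-specific system, since they refer only to the set-level embeddings.

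The main obstacle is the bookkeeping that identifies the underlying sets of $A_n^0$ and $A_n^1$.  The point to keep in focus is that once the hereditary property removes extraneous elements, the underlying set of a minimal disjoint $n$-amalgam is determined, up to canonical bijection, by the underlying sets and maps $f_{p,q}$ of the input data alone, independently of the ambient language or the structure placed on top.
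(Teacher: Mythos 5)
Your proposal is correct and follows essentially the same route as the paper's proof: amalgamate the $L_0$- and $L_1$-reducts separately, trim each amalgam to the union of the images using heredity, and then identify the two underlying sets via the embeddings $f_{X,n,t}$, using disjointness for injectivity of the identification. The only cosmetic difference is that you describe the common underlying set as a quotient of $\bigsqcup_{p \subsetneq n} A_p$, whereas the paper builds the bijection $A_{n,0} \to A_{n,1}$ directly by sending each element to its preimage under the unique minimal $X$ with $a \in f_{X,n,0}(A_X)$ --- the same argument in different packaging.
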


\begin{proof}
	Suppose that $A_X \in \KK_0 * \KK_1$ for $X \subset n$ and $f_{X,Y} \in \emb(A_X, A_Y)$ for $X \subseteq Y \subset n$ is a disjoint $(\Pow(n) \setminus \{ n \})$-amalgamation system in $\KK_0 * \KK_1$.  Then, for each $t < 2$, $((A_X |_{L_t})_{X \subset n}, (f_{X,Y})_{X \subseteq Y \subset n})$ is a disjoint $(\Pow(n) \setminus \{ n \})$-amalgamation system in $\KK_t$.  Since $\KK_t$ has disjoint $n$-amalgamation, there exists an extension of this to a disjoint $\Pow(n)$-amalgamation system in $\KK_t$, say with $A_{n,t}$ and $f_{X,n,t} \in \emb(A_X |_{L_t}, A_{n,t})$ for $X \subset n$ completing the system.  Since $\KK_t$ has the hereditary property, we may assume that $A_{n,t} = \bigcup_{X \subset n} f_{X,n,t}(A_X)$.  Define $g : A_{n,0} \rightarrow A_{n,1}$ as follows:
	
	Fix $a \in A_{n,0}$.  Fix $X \subset n$ minimal such that $a \in f_{X,n,0}(A_X)$.  Such an $X$ is unique; if there were $X \neq X'$ so that $a \in f_{X,n,0}(A_X) \cap f_{X,n,0}(A_{X'})$, then, by disjointness, $a \in f_{X \cap X',n,0}(A_{X \cap X'})$.  Let $b \in A_X$ be such that $a = f_{X,n,0}(b)$ and set $g(a) = f_{X,n,1}(b)$.  Note that $X$ is also minimal such that $g(a) \in f_{X,n,1}(A_X)$; if $g(a) \in f_{X',n,1}(A_{X'})$ with $X' \subseteq X$, then, by commutivity, $a \in f_{X',n,0}(A_{X'})$, hence $X' = X$ by minimality of $X$.  Therefore, this process is reversible, and $g$ is a bijection.
	
	We can endow $A_{n,0}$ with an $L_1$-structure via $g$; call it $A_n$.  By construction, for all $X \subset n$, $g \circ f_{X,n,0} = f_{X,n,1}$.  Thus, $f_{X,n,0} \in \emb(A_X, A_n)$ and this is a disjoint $\Pow(n)$-amalgamation system in $\KK_0 * \KK_1$.
\end{proof}

\section{Configurations}\label{Section_Configurations}

In this section we refer back to the model-theoretic themes in the Introduction, namely, the new framework for studying model-theoretic dividing lines called \emph{configurations} developed in \cite{GuiHill} and \cite{GuiPar}.  This framework led to a natural partial ordering on dividing lines; see Remark \ref{Remark_DividingLines}.  Some open questions remained in terms of our understanding of this partial order.

Since configurations study the relationship of a combinatorial object to the model ``containing'' it, we will employ two languages, an \emph{index} language, $L_0$, and a \emph{target} language, $L$.  We demand that $L_0$ is a relational language, but we allow $L$ to be any language.  Our index object $\KK$  will be a class of finite $L_0$-structures closed under isomorphism.  Our target object $M$ will be  an $L$-structure.  We study configurations from the index $\KK$ to the target $M$.

\begin{definition}[Configuration]\label{Definition_Configuration}
    Let $L_0$ be a relational language and let $\KK$ be a class of finite $L_0$-structures closed under isomorphism.  Let $L$ be a language, $M$ an $L$-structure, and $n < \omega$.  A \emph{$\KK$-configuration into $M^n$} is a sequence of functions $(I, f_A)_{A \in \KK}$ such that
    \begin{enumerate}
        \item $I : \sig(L_0) \rightarrow \Fml(L,M)$,
        \item for all $A \in \KK$, $f_A : A \rightarrow M^n$, and
        \item for all $R \in \sig(L_0)$, for all $A \in \KK$, for all $\oa \in A^{\arity(R)}$,
        \[
            A \models R(\oa) \Longleftrightarrow M \models I(R)(f_A(\oa)).
        \]
    \end{enumerate}
    We say that a $\KK$-configuration $(I, f_A)_{A \in \KK}$ is \emph{injective} if $f_A$ is injective for each $A \in \KK$.
    
    If $T$ is an $L$-theory, we say that $T$ \emph{admits a $\KK$-configuration} if there exists $M \models T$, $n < \omega$, and a $\KK$-configuration into $M^n$.  Let $\CC_\KK$ denote the class of all complete theories with infinite models that admit a $\KK$-configuration.
\end{definition}

\begin{remark}\label{Remark_OldConfigurations}
	Definition \ref{Definition_Configuration} is a generalization of the notion of $\KK$-configuration from \cite{GuiPar}.  Suppose that $L_0$ is a relational language and $\Gamma$ is an $L_0$-structure.  Suppose that $L$ is a language, $M$ is a $|\Gamma|^+$-saturated $L$-structure, and $n < \omega$.  Then, by the proof of Lemma 4.2 of \cite{GuiPar}, there exists an $\age(\Gamma)$-configuration into $M^n$ if and only if there exists $I : \sig(L_0) \rightarrow \Fml(L,M)$ and $f : \Gamma \rightarrow M^n$ such that, for all $R \in \sig(L_0)$ and $\oa \in \Gamma^{\arity(R)}$,
    \[
        \Gamma \models R(\oa) \Longleftrightarrow M \models I(R)(f(\oa)).
    \]   
\end{remark}

\if\papermode0
	\begin{proof}
		Consider the $L$-type $\Sigma$ in the variables $(\ox_a)_{a \in \Gamma}$ (where $|\ox_a| = n$) given by
		\[
			\Sigma = T \cup \{ I(R)(\ox_\oa)^{\mathiff \Gamma \models R(\oa)} : R \in \sig(L_0), \oa \in \Gamma^{\arity(R)} \}.
		\]
		If there exists an $\age(\Gamma)$-configuration into $M^n$, then, by compactness, $\Sigma$ is consistent.  Since $M$ is $|\Gamma|^+$-saturated, $\Sigma$ has a realization in $M$.
	\end{proof}
\fi

\begin{remark}[Proposition 4.14 of \cite{GuiPar}]\label{Remark_AgeIndConfig}
	If a class is \ageindivisible, then configurations can be made uniform.  Let $L_0$ be a relational language and $\KK$ be a class of finite $L_0$-structures that has the joint embedding property and is \ageindivisible.  Let $L$ be a language, $M$ be a sufficiently saturated $L$-structure, $C \subseteq M$ small, and $n < \omega$.  If there exists a $\KK$-configuration into $M^n$, then there exists a $\KK$-configuration $(I, f_A)_{A \in \KK}$ into $M^n$ such that, for all $A, B \in \KK$, $a \in A$, and $b \in B$,
    \[
        \tp_L(f_A(a) / C) = \tp_L(f_B(b) / C).
    \]
\end{remark}

The notion of configuration is related to non-collapsing generalized indiscernibles, when $\KK$ is a \fraisse~class with the Ramsey property.

\begin{definition}\label{Definition_GenInd}
    Let $L_0$ be a relational language and let $\Gamma$ be an $L_0$-structure.  Let $L$ be a language and $M$ an $L$-structure.  A \emph{$\Gamma$-generalized indiscernible into $M$} is a function $f : \Gamma \rightarrow M^n$ (for some $n < \omega$) such that, for all $\oa, \ob \in \Gamma^{< \omega}$,
    \[
        \qftp_{L_0}(\oa) = \qftp_{L_0}(\ob) \Longrightarrow \tp_L(f(\oa)) = \tp_L(f(\ob)).
    \]
    We say $f$ is \emph{non-collapsing} if, for all $\oa, \ob \in \Gamma^{< \omega}$,
    \[
        \qftp_{L_0}(\oa) = \qftp_{L_0}(\ob) \Longleftrightarrow \tp_L(f(\oa)) = \tp_L(f(\ob)).
    \]
\end{definition}

\begin{remark}[Theorem 3.14 of \cite{GuiHill}]
    Suppose that $L_0$ is a finite relational language, $\KK$ is a \fraisse~class of $L_0$-structures with the Ramsey property (for embeddings), and $\Gamma$ is the \fraisse~limit of $\KK$.  Then, $\CC_{\KK}$ is the class of all complete theories $T$ with infinite models such that there exists a non-collapsed $\Gamma$-generalized indiscernible into $M$ for some $M \models T$.
\end{remark}

\begin{remark}[Theorem 5.2 of \cite{GuiPar}]\label{Remark_DividingLines}
    We get the following familiar dividing lines:
    \begin{enumerate}
        \item $\CC_{\EE}$ is the class of all complete theories with infinite models.
        \item $\CC_{\LO}$ is the class of all complete, unstable theories.
        \item $\CC_{\GG}$ is the class of all complete theories with the independence property.
        \item For all $k \ge 2$, $\CC_{\HH_k}$ is the class of all complete theories with the $(k-1)$-independence property.
    \end{enumerate}
    In particular, we get the following strict chain of classes:
	\[
		\CC_{\EE} \supset \CC_{\LO} \supset \CC_{\GG} = \CC_{\HH_2} \supset \CC_{\HH_3} \supset \CC_{\HH_4} \supset \dots.
	\]
\end{remark}

This leads to the following question:

\begin{question}\label{Question_Classes}
	Let $\mathbf{P}$ be the class of all $\CC_{\KK}$ as $\KK$ varies over all classes of finite structures, closed under isomorphism, in a relational language.  Then, is $\mathbf{P}$ linearly ordered by inclusion?
\end{question}

In the remainder of this section, we explore some partial results towards answering Question \ref{Question_Classes}.  First, we show that some of the other classes of structures mentioned in Example \ref{Example_BasicExamples} do not create new classes of theories.  Then, we show that considering products of known classes of structures does not create new classes of theories.

\subsection{Other Configuration Classes}

Suppose that $T$ has infinite models.  In the next two lemmas, we show that, if $T$ admits a $\KK$-configuration and $M \models T$ is sufficiently saturated, then there exists an \emph{injective} $\KK$-configuration into $M^n$ for some $n < \omega$.  This will simplify our analysis.

\begin{lemma}\label{Lemma_SaturatedConfig}
    Let $L_0$ be a relational language and let $\KK$ be a class of finite $L_0$-structures closed under isomorphism with fewer than $\kappa$-many isomorphism classes of structures for some cardinal $\kappa \ge \aleph_0$.  Let $L$ be a language and let $T$ be an $L$-theory with infinite models.  If $T$ admits a $\KK$-configuration, $M \models T$, and $M$ is $\kappa$-saturated, then there exists a $\KK$-configuration into $M^n$ for some $n < \omega$.
\end{lemma}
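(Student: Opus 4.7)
The plan is to express the existence of a $\KK$-configuration into $M^n$ as the satisfaction of a consistent partial $L$-type in $M$, and then invoke $\kappa$-saturation to realize it. This generalizes the realization argument sketched in Remark~\ref{Remark_OldConfigurations} from a single structure $\Gamma$ to a class $\KK$ of finite structures.

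First, fix a witnessing configuration $(I', f'_A)_{A \in \KK}$ into $(M')^n$ for some $M' \models T$ and $n < \omega$. Since $\KK$ is closed under isomorphism, choose a set $\KK_0 \subseteq \KK$ of representatives, one per isomorphism class, with $|\KK_0| < \kappa$; it suffices to build a $\KK_0$-configuration into $M^n$, from which the full $\KK$-configuration follows by composing with chosen isomorphisms to the representatives. For each $R \in \sig(L_0)$ whose interpretation is empty in every $A \in \KK_0$, replace $I'(R)$ by the parameter-free formula $\bot$; this preserves the configuration conditions on $\KK_0$. Let $\oc'$ be a tuple in $M'$ enumerating all parameters appearing in the remaining image of $I'$, and write $I'(R)(\ox) = \varphi_R(\ox; \oz)$ for parameter-free $L$-formulas $\varphi_R$, where $|\oz| = |\oc'|$.

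Next, consider the partial $L$-type over $\emptyset$ in variables $\oz$ and $(\ox_{A,a})_{A \in \KK_0, a \in A}$, with each $\ox_{A,a}$ of length $n$:
\[
\Sigma = \left\{ \varphi_R(\ox_{A,a_0}, \ldots, \ox_{A,a_{m-1}}; \oz)^{\mathiff A \models R(\oa)} : R \in \sig(L_0),\ A \in \KK_0,\ \oa \in A^m,\ m = \arity(R) \right\}.
\]
The assignment $\oz \mapsto \oc'$, $\ox_{A,a} \mapsto f'_A(a)$ realizes $\Sigma$ in $M'$, so $T \cup \Sigma$ is consistent. By $\kappa$-saturation of $M$, realize $\Sigma$ in $M$ by some $\oc$ and $(\ob_{A,a})_{A \in \KK_0, a \in A}$. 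Defining $I(R)(\ox) := \varphi_R(\ox; \oc)$ for each $R \in \sig(L_0)$, $f_A(a) := \ob_{A,a}$ for $A \in \KK_0, a \in A$, and extending $f_A$ to $\KK \setminus \KK_0$ via chosen isomorphisms, yields the desired $\KK$-configuration into $M^n$.

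The main subtle point is the cardinality counting needed to apply $\kappa$-saturation: the total numbers of variables and of formulas in $\Sigma$ must be $< \kappa$. This uses both the hypothesis that $\KK$ has fewer than $\kappa$ isomorphism classes (bounding the indexing over $(A, \oa)$) and the preliminary trivialization of $I$ on vacuous relation symbols (bounding the ``active'' symbols contributing to $\oc'$ and to $\Sigma$). Without the trivialization step, $\Sigma$ could have too many formulas and $\oc'$ too many parameters to be realizable by a merely $\kappa$-saturated $M$.
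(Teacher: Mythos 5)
Your proposal follows the paper's proof essentially step for step: pass to a set $K$ of representatives of the ($<\kappa$ many) isomorphism classes, introduce an $n$-tuple of variables for each element of each representative, form the partial type asserting the correct truth values of the $I(R)$'s, realize it in $M$ by saturation, and transport back to all of $\KK$ along chosen isomorphisms. You are in fact \emph{more} careful than the paper on one point: the formulas $I'(R)$ lie in $\Fml(L,M')$ and so may carry parameters from $M'$, which cannot literally be used in $M$; the paper's proof calls $\Sigma$ a ``type over $\emptyset$'' and quietly ignores this, whereas you correctly pull the parameters out into a variable tuple $\oz$ to be realized in $M$ alongside the $\ox_{A,a}$.

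The one step of yours I cannot accept as written is the claim that the trivialization of vacuous relation symbols bounds the length of $\oz$ (equivalently, the number of ``active'' symbols) below $\kappa$. The number of symbols $R \in \sig(L_0)$ with $R^A \neq \emptyset$ for some $A \in \KK_0$ is controlled by $|\sig(L_0)|$, not by the number of isomorphism classes of $\KK$: for instance, $2^{\aleph_0}$ distinct binary symbols can all be interpreted nontrivially on a single two-element structure, so with $\kappa = \aleph_1$ your tuple $\oz$ could have length $2^{\aleph_0}$ and the iterated realization of $\Sigma$ by $\kappa$-saturation breaks down at stage $\kappa$. (Also note that the number of \emph{formulas} in $\Sigma$ is irrelevant to saturation; only the number of variables and parameters matters, so that half of your worry is a red herring.) This is harmless whenever $|\sig(L_0)| < \kappa$, which covers every use of the lemma in the paper, and the paper's own proof does not address it at all, so you are not worse off than the source. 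If you want the lemma in full generality, the repair is to decouple the two realizations: first realize only the $\ox_{A,a}$ (fewer than $\kappa$ variables over $\emptyset$) from the type consisting of the formulas $\exists \oz_{R_1}\cdots\exists \oz_{R_j}\bigl(\text{finite conjunction of the iff-conditions}\bigr)$, where $\oz_R$ is the finite parameter tuple of $I'(R)$; then, for each symbol $R$ separately, realize $\oz_R$ as a finite tuple satisfying a type over the already-realized set of size $<\kappa$. Since distinct symbols' parameter tuples are unconstrained by one another, these realizations can be carried out independently, and no bound on $|\sig(L_0)|$ is needed.
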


\begin{proof}
	\if\papermode0
	    Since $T$ admits a $\KK$-configuration, there exists $M' \models T$ and $(I, f_A)_{A \in \KK}$ a $\KK$-configuration into $(M')^n$ for some $n$.  Let $K \subseteq \KK$ be a representative set of the isomorphism classes of structures from $\KK$.  By assumption, $|K| < \kappa$ and, for each $A \in K$, $|A| < \aleph_0$.  For each $A \in K$ and $a \in A$, consider a new variable $\oy_{A,a}$ with $|\oy_{A,a}| = n$.  For each $A \in K$, define $\Sigma_A$ an $L$-type over $\emptyset$ in the variables $(\oy_{A,a})_{a \in A}$ as follows:
	    \[
    	    \Sigma_A = \left\{ I(R)( \oy_{a_0}, ... \oy_{a_{\arity(R)-1}} )^{\mathiff A \models R(\oa)} : R \in \sig(L_0), \oa \in A^{\arity(R)} \right\}.
	    \]
    	Finally, let $\Sigma$ be the union of the $\Sigma_A$ for all $A \in K$.  Note that $\Sigma$ is consistent, witnessed by $(f_A(a))_{A \in K, a \in A}$ in $M'$.  Moreover, $\Sigma$ has fewer than $\kappa$-many variables.  Since $M$ is $\aleph_1$-saturated, there exists $\mathbf{c} = (\oc_{A,a})_{A \in K, a \in A}$ in $M$ such that $\mathbf{c} \models \Sigma$.  For each $A \in \KK$, choose $A' \in K$ and an $L_0$-isomorphism $g : A \rightarrow A'$.  Set $f'_A : A \rightarrow M^n$ as $f'_A(a) = \oc_{A',g(a)}$ for all $a \in A$.  Then, check that $(I, f'_A)_{A \in \KK}$ is a $\KK$-configuration into $M^n$.
    \else
    	This follows by compactness and $\kappa$-saturation.
    \fi
\end{proof}

\begin{lemma}\label{Lemma_MakeInjective}
    Let $L_0$ be a relational language and let $\KK$ be a class of finite $L_0$-structures closed under isomorphism.  Let $L$ be a language, $M$ be an infinite $L$-structure, and $n < \omega$.  If there exists a $\KK$-configuration into $M^n$, then there exists an injective $\KK$-configuration into $M^{n+1}$.  In particular, if an $L$-theory with infinite models admits a $\KK$-configuration, then it admits an injective one.
\end{lemma}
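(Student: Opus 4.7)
The plan is to pad the given configuration with a single extra coordinate from $M$ that acts as a distinguishing ``tag'' for the elements of each $A$, while arranging that the $L$-formulas interpreting the relation symbols simply ignore this new coordinate. Since $M$ is infinite and each $A \in \KK$ is finite, there is always room in $M$ to choose distinct tags, so injectivity is attainable for free.

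Concretely, starting from a $\KK$-configuration $(I, f_A)_{A \in \KK}$ into $M^n$, I would define a new interpretation $I' \colon \sig(L_0) \to \Fml(L,M)$ as follows: for $R \in \sig(L_0)$ of arity $k$, let $I'(R)$ be the $L$-formula obtained from $I(R)$ by replacing each of the $k$ blocks of $n$ free variables by a block of $n+1$ variables, and applying $I(R)$ only to the initial length-$n$ segment of each block (so the $(n+1)$st coordinate of each block is a dummy). For each $A \in \KK$, fix any injection $g_A \colon A \to M$, which exists because $A$ is finite and $M$ is infinite, and define $f'_A \colon A \to M^{n+1}$ by concatenation, $f'_A(a) = (f_A(a), g_A(a))$. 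Then each $f'_A$ is injective because $g_A$ is, and for every $R \in \sig(L_0)$ of arity $k$ and every $\oa \in A^k$, the initial length-$n$ segments of $f'_A(a_0), \ldots, f'_A(a_{k-1})$ are exactly $f_A(a_0), \ldots, f_A(a_{k-1})$, so $A \models R(\oa) \Longleftrightarrow M \models I(R)(f_A(\oa)) \Longleftrightarrow M \models I'(R)(f'_A(\oa))$. Thus $(I', f'_A)_{A \in \KK}$ is an injective $\KK$-configuration into $M^{n+1}$.

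For the ``in particular'' statement, suppose the $L$-theory $T$ has infinite models and admits a $\KK$-configuration. Pick an infinite $M \models T$ that is $\kappa$-saturated for some $\kappa$ exceeding the number of isomorphism classes of structures in $\KK$; such an $M$ exists since $T$ has infinite models. By Lemma \ref{Lemma_SaturatedConfig}, $M$ admits a $\KK$-configuration, and applying the first part inside $M$ yields an injective one, witnessing that $T$ admits an injective $\KK$-configuration. There is no real obstacle here: the entire argument is just the observation that an extra dummy coordinate suffices to force injectivity while remaining invisible to the interpretation, so the defining biconditional is preserved on the nose.
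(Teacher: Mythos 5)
Your proposal is correct and follows essentially the same route as the paper's own proof: append a dummy $(n+1)$st coordinate carrying an injection $g_A\colon A\to M$ (available since $A$ is finite and $M$ is infinite), let $I'(R)$ ignore the new coordinates, and for the ``in particular'' clause pass to a sufficiently saturated model via Lemma \ref{Lemma_SaturatedConfig}. No issues.
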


\begin{proof}
    Suppose that $(I, f_A)_{A \in \KK}$ is a $\KK$-configuration into $M^n$.  Define the function $I' : \sig(L_0) \rightarrow \Fml(L,M)$ by setting, for each $R \in \sig(L_0)$ of arity $n$,
    \[
        I'(R)(\oy_0, z_0, \oy_1, z_1, ..., \oy_{n-1}, z_{n-1}) = I(R)(\oy_0, \oy_1, ..., \oy_{n-1}).
    \]
    For each $A \in \KK$, define $f'_A : A \rightarrow M^{n+1}$ as follows: First, since $M$ is infinite and $A$ is finite, there exists an injective function $g : A \rightarrow M$.  For each $a \in A$, let $f'_A(a) = (f_A(a), g(a))$.  It is easy to check that $(I', f'_A)_{A \in \KK}$ is an injective $\KK$-configuration into $M^{n+1}$.
    
    Finally, suppose that an $L$-theory $T$ with infinite models admits a $\KK$-configuration.  Then, by Lemma \ref{Lemma_SaturatedConfig}, for any sufficiently saturated (infinite) $M \models T$, there exists a $\KK$-configuration into $M^n$ for some $n < \omega$.  Hence, there exists an injective $\KK$-configuration into $M^{n+1}$.
\end{proof}

We discuss the notion of \emph{reductive subclass} from Definition 4.8 of \cite{GuiPar}, as it provides a general framework for creating new configurations from old ones.

\begin{definition}
    Let $L_0$ and $L_1$ be relational languages such that $\sig(L_0) \subseteq \sig(L_1)$.  For each $t < 2$, let $\KK_t$ be a class of finite $L_t$-structures closed under isomorphism.  We say that $\KK_0$ is a \emph{reductive subclass} of $\KK_1$ if, for all $A \in \KK_0$, there exists $B \in \KK_1$ such that $A \cong_{L_0} B |_{L_0}$.
\end{definition}

\begin{lemma}[Lemma 4.10 of \cite{GuiPar}]\label{Lemma_ReductiveSubclass}
    For each $t < 2$, let $L_t$ be a relational structure and let $\KK_t$ be a class of finite $L_t$-structures closed under isomorphism.  Let $L$ be a language, $M$ and $L$-structure, and $n < \omega$.  If there exists a $\KK_1$-configuration into $M^n$ and $\KK_0$ is a reductive subclass of $\KK_1$, then there exists a $\KK_0$-configuration into $M^n$.

    In particular, if $\KK_0$ is a reductive subclass of $\KK_1$, then $\CC_{\KK_1} \subseteq \CC_{\KK_0}$.
\end{lemma}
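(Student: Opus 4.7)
The plan is to take a $\KK_1$-configuration into $M^n$ and manufacture a $\KK_0$-configuration by using the reductive subclass property to associate each $A\in\KK_0$ with some $B_A\in\KK_1$ whose $L_0$-reduct looks like $A$, then transport the existing configuration maps back through this association. The underlying interpretation function $I_0$ will just be the restriction of $I_1$ to $\sig(L_0)$, which makes sense because $\sig(L_0)\subseteq\sig(L_1)$ by the definition of reductive subclass.

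More concretely, let $(I_1, f_B)_{B\in\KK_1}$ be a $\KK_1$-configuration into $M^n$. Define $I_0 = I_1|_{\sig(L_0)}$. For each $A\in\KK_0$, use the reductive subclass hypothesis to pick some $B_A\in\KK_1$ together with an $L_0$-isomorphism $\iota_A : A \to B_A|_{L_0}$, and then set $g_A : A \to M^n$ by $g_A(a) = f_{B_A}(\iota_A(a))$. To verify this works, fix $R\in\sig(L_0)$ and $\oa\in A^{\arity(R)}$; then since $\iota_A$ is an $L_0$-isomorphism and $R\in\sig(L_0)$,
\[
A\models R(\oa) \iff B_A|_{L_0}\models R(\iota_A(\oa)) \iff B_A\models R(\iota_A(\oa)),
\]
and the last condition is equivalent to $M\models I_1(R)(f_{B_A}(\iota_A(\oa)))$ by the $\KK_1$-configuration property, which is exactly $M\models I_0(R)(g_A(\oa))$ by our definitions. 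Hence $(I_0, g_A)_{A\in\KK_0}$ is a $\KK_0$-configuration into $M^n$, as required.

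The ``in particular'' clause is then immediate: if $T\in\CC_{\KK_1}$, choose $M\models T$ and $n<\omega$ witnessing the $\KK_1$-configuration; the construction above produces a $\KK_0$-configuration into the same $M^n$, so $T\in\CC_{\KK_0}$.

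I do not expect any genuine obstacle here — the argument is essentially bookkeeping. The only mild subtlety is ensuring that one uses a fixed choice of $B_A$ and $\iota_A$ for each $A\in\KK_0$ (one can appeal to the axiom of choice, or observe that the isomorphism class of $A$ determines the relevant data up to equivalence, so the configuration is well-defined), and that $I_0$ is well-defined as a restriction precisely because $\sig(L_0)\subseteq\sig(L_1)$ is built into the definition of reductive subclass. Everything else is a direct unwinding of Definition \ref{Definition_Configuration}.
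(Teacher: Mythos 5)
Your argument is correct and complete: restricting $I_1$ to $\sig(L_0)$ and transporting $f_{B_A}$ through the $L_0$-isomorphism $\iota_A$ is exactly the intended construction, and the verification chain of equivalences is valid since reducts preserve the interpretation of the retained symbols. The paper itself gives no proof (it cites Lemma 4.10 of \cite{GuiPar}), but your unwinding is the standard one and I see no gap.
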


\begin{proposition}\label{Proposition_ClassContainment}
	For each $t < 2$, let $L_t$ be a relational language and let $\KK_t$ be a class of finite $L_t$-structures closed under isomorphism.  Suppose that there exists $I : \sig(L_0) \rightarrow \Fml(L_1,\emptyset)$ such that, for all $R \in \sig(L_0)$, $I(R)$ is a (parameter-free) quantifier-free $L_1$-formula.  Moreover, suppose that there exists $n < \omega$ such that, for all $A \in \KK_0$, there exists $B \in \KK_1$ and an injective function $f_A : A \rightarrow B^n$ such that, for all $R \in \sig(L_0)$ and $\oa \in A^{\arity(R)}$,
	\[
		A \models R(\oa) \Longleftrightarrow B \models I(R)(f_A(\oa)).
	\]
	Then,
	\[
		\CC_{\KK_1} \subseteq \CC_{\KK_0}.
	\]
\end{proposition}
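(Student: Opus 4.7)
The plan is to translate the given $\KK_1$-configuration of $T$ into a $\KK_0$-configuration by substituting along the formulas $I(R)$. Fix $T \in \CC_{\KK_1}$. By Lemma~\ref{Lemma_MakeInjective}, I may choose $M \models T$ together with an \emph{injective} $\KK_1$-configuration $(J, g_B)_{B \in \KK_1}$ into $M^m$ for some $m < \omega$, so that each $g_B : B \to M^m$ is injective; injectivity will be essential in the substitution step below.

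For each $A \in \KK_0$, pick $B_A \in \KK_1$ and the injective $f_A : A \to B_A^n$ provided by the hypothesis, and define $h_A : A \to M^{nm}$ by first applying $f_A$ and then $g_{B_A}$ coordinatewise on the resulting $n$-tuple, flattening to an $nm$-tuple in $M$. For each $R \in \sig(L_0)$ of arity $k$, view the quantifier-free parameter-free $L_1$-formula $I(R)$ in variables $(y_{i,j})_{i < k,\, j < n}$. Define $I'(R)$ by the syntactic substitution that replaces each variable $y_{i,j}$ by an $m$-tuple of fresh variables $\bar{z}_{i,j}$, replaces each atomic subformula $R'(y_{i_0,j_0}, \dots, y_{i_{l-1},j_{l-1}})$ (with $R' \in \sig(L_1)$) by the $L$-formula $J(R')(\bar{z}_{i_0,j_0}, \dots, \bar{z}_{i_{l-1},j_{l-1}})$, and replaces each equality atomic $y_{i,j} = y_{i',j'}$ by the conjunction $\bigwedge_{p < m} z_{i,j,p} = z_{i',j',p}$. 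This produces a map $I' : \sig(L_0) \to \Fml(L, M)$ (the parameters, if any, come from those of the formulas $J(R')$).

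The key claim to verify is that $(I', h_A)_{A \in \KK_0}$ is a $\KK_0$-configuration into $M^{nm}$. For $R \in \sig(L_0)$ and $\oa \in A^{\arity(R)}$, the hypothesis immediately gives the first equivalence in
\[
A \models R(\oa) \ \Longleftrightarrow\  B_A \models I(R)(f_A(\oa)) \ \Longleftrightarrow\  M \models I'(R)(h_A(\oa)),
\]
so only the second equivalence requires work. I would prove it by induction on the quantifier-free structure of $I(R)$: for atomic subformulas built from an $L_1$-relation symbol $R'$, the equivalence is precisely the defining property of the $\KK_1$-configuration $(J, g_{B_A})$, applied to the tuple of coordinates of $f_A(\oa)$ indexed by the relevant $(i, j)$'s; Boolean connectives pass through automatically; and equality atomics $y_{i,j} = y_{i',j'}$ are preserved by the translation exactly because $g_{B_A}$ is injective.

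The main obstacle is this last point: without an injective $\KK_1$-configuration, the translation could fail on equality atomics, since two distinct elements $b, b' \in B_A$ might satisfy $g_{B_A}(b) = g_{B_A}(b')$ in $M$, breaking the biconditional. This is the reason for invoking Lemma~\ref{Lemma_MakeInjective} at the outset. Once the equivalence above is verified, the pair $(I', h_A)_{A \in \KK_0}$ witnesses $T \in \CC_{\KK_0}$, yielding the desired inclusion $\CC_{\KK_1} \subseteq \CC_{\KK_0}$.
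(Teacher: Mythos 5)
Your proof is correct and follows essentially the same route as the paper's: invoke Lemma~\ref{Lemma_MakeInjective} to obtain an injective $\KK_1$-configuration $(J, g_B)$, compose $f_A$ with $g_B$ coordinatewise to get maps into $M^{nm}$, and substitute $J(R')$ for each $L_1$-symbol in the quantifier-free formulas $I(R)$. Your explicit handling of equality atomics (and the observation that injectivity of $g_B$ is exactly what makes that case go through) is a detail the paper's argument leaves implicit, but it is the same argument.
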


\begin{proof}
	\if\papermode0
		Fix $(I, f_A)_{A \in \KK_0}$ as in the hypothesis (say with $n < \omega$).  Let $L$ be a language and $T$ a complete $L$-theory.  Suppose $T \in \CC_{\KK_1}$.  Then, by Lemma \ref{Lemma_MakeInjective}, there exists $M \models T$, $k < \omega$, and $(J, g_B)_{B \in \KK_1}$ an injective $\KK_1$-configuration into $M^k$.  We claim that there is a $\KK_0$-configuration into $M^{kn}$.  For each $R \in \sig(L_0)$, let $I^*(R)$ be the $L$-formula obtained by replacing all instances of $S \in \sig(L_1)$ from $I(R)$ with $J(S)$.  For all $A \in \KK_0$, fix $B \in \KK_1$ such that $f_A(A) \subseteq B^n$ and define $f^*_A : A \rightarrow M^{kn}$ by setting, for all $a \in A$,
		\[
			f^*_A(a) = (g_B((f_A(a))_0), \dots, g_B((f_A(a))_{n-1})).
		\]
		Then, $(I^*, f^*_A)$ is a $\KK_0$-configuration into $M^{kn}$.  Thus, $T \in \CC_{\KK_0}$.
	\else
		This is essentially Proposition 4.13 of \cite{GuiPar}.
	\fi
\end{proof}

Abusing notation, we will call an $(I, f_A)_{A \in \KK_0}$ as in Proposition \ref{Proposition_ClassContainment} a \emph{$\KK_0$-configuration into $\KK_1$}.

\begin{lemma}\label{Lemma_DiagraphGraph}
	If $\DG$ is the class of all finite directed graphs and $\GG$ is the class of all finite graphs, then there exists a $\DG$-configuration into $\GG$.
\end{lemma}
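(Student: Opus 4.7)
The plan is to encode each vertex of a finite directed graph as a pair of vertices of an undirected graph, using the two coordinates to distinguish the ``tail'' and ``head'' positions of directed edges. Take $n = 2$. Let $R$ denote the binary symbol for directed edges in the language of $\DG$ and let $E$ denote the symmetric edge symbol in the language of $\GG$. Define the interpretation
\[
    I(R)(x_0, x_1, y_0, y_1) \;:=\; E(x_0, y_1),
\]
which is a parameter-free quantifier-free $L_1$-formula, as required by Proposition \ref{Proposition_ClassContainment}.

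For each $A \in \DG$ with vertex set $\{v_0, \dots, v_{m-1}\}$, I will build $B \in \GG$ whose universe is the disjoint union of two copies $X = \{x_0, \dots, x_{m-1}\}$ and $Y = \{y_0, \dots, y_{m-1}\}$ of the vertices of $A$, and declare the unordered pair $\{x_i, y_j\}$ to be an edge of $B$ precisely when $A \models R(v_i, v_j)$, with no other edges. I then define $f_A \colon A \to B^2$ by $f_A(v_i) = (x_i, y_i)$; this is automatically injective. Verification is then a direct unfolding: $A \models R(v_i, v_j)$ iff there is an edge between $x_i$ and $y_j$ in $B$ iff $B \models E(f_A(v_i)_0, f_A(v_j)_1)$ iff $B \models I(R)(f_A(v_i), f_A(v_j))$. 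This gives exactly the data $(I, f_A)_{A \in \DG}$ required by Proposition \ref{Proposition_ClassContainment}, whence $\CC_{\GG} \subseteq \CC_{\DG}$ (the use this lemma will be put to in the sequel).

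The only genuine subtlety, and what makes the asymmetric definition of $I(R)$ essential, is ensuring that the symmetry of $E$ does not accidentally conflate the encoding of a directed edge $v_i \to v_j$ with its reverse $v_j \to v_i$. This is handled by the disjointness of $X$ and $Y$: the unordered pair $\{x_i, y_j\}$ is distinct from $\{x_j, y_i\}$ for any $i, j$, so the single edge added for $A \models R(v_i, v_j)$ is read by $I(R)$ as an assertion about $(f_A(v_i), f_A(v_j))$ and not about $(f_A(v_j), f_A(v_i))$. If $\DG$ is taken to exclude self-loops, the case $i = j$ requires no action; if loops are allowed, the edge $\{x_i, y_i\}$ is added iff $A \models R(v_i, v_i)$, which is again consistent with the irreflexivity of $E$ since $x_i \neq y_i$.
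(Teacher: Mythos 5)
Your proof is correct and is essentially the same as the paper's: the paper also sets $I(R)(x_{0,0},x_{0,1},x_{1,0},x_{1,1}) = E(x_{0,0},x_{1,1})$, builds the graph on $A \times 2$ (your two copies $X$ and $Y$) with an edge from $(a,0)$ to $(b,1)$ exactly when $A \models R(a,b)$, and takes $f_A(a) = ((a,0),(a,1))$. Your additional remarks on why symmetry of $E$ causes no conflation, and on loops, are correct elaborations of the same construction.
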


\begin{proof}
	Let $R$ be the binary relation symbol for $\DG$ and let $E$ be the binary relation symbol for $\GG$.  Let
	\[
		I(R)(x_{0,0}, x_{0,1}, x_{1,0}, x_{1,1}) = E(x_{0,0}, x_{1,1}).
	\]
	For any $A \in \DG$, consider the graph on $A \times 2$ given by setting, for all $a, b \in A$,
	\[
		E((a,0), (b,1)) \Longleftrightarrow E((b,1), (a,0)) \Longleftrightarrow A \models R(a,b),
	\]
	and put no other $E$-relations on $A \times 2$.  Let $f_A : A \rightarrow (A \times 2)^2$ be given by, for all $a \in A$,
	\[
		f_A(a) = ((a,0), (a,1)).
	\]
	One can check that $(I, f_A)_{A \in \DG}$ is a $\DG$-configuration into $\GG$.
\end{proof}

\begin{proposition}\label{Proposition_POTTDGGG}
	If $\PO$ is the class of all finite partial orders, $\TT$ is the class of all finite tournaments, $\DG$ is the class of all finite directed graphs, and $\GG$ is the class of all finite graphs, then
	\[
		\CC_{\PO} = \CC_{\TT} = \CC_{\DG} = \CC_{\GG}.
	\]
\end{proposition}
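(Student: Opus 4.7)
The plan is to apply Proposition~\ref{Proposition_ClassContainment} repeatedly: to establish an inclusion $\CC_{\KK_1} \subseteq \CC_{\KK_0}$, it suffices to exhibit a $\KK_0$-configuration into $\KK_1$. I will verify all equalities by showing $\CC_{\DG} = \CC_{\KK}$ for each $\KK \in \{\GG, \TT, \PO\}$.

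Three of the six resulting inclusions should follow essentially for free. The containment $\CC_{\GG} \subseteq \CC_{\DG}$ is immediate from Lemma~\ref{Lemma_DiagraphGraph} combined with Proposition~\ref{Proposition_ClassContainment}. For the opposite-type inclusions $\CC_{\DG} \subseteq \CC_{\GG}$, $\CC_{\DG} \subseteq \CC_{\TT}$, and $\CC_{\DG} \subseteq \CC_{\PO}$, the key observation is that every tournament and every strict partial order is already a directed graph in the obvious way, while every graph becomes a symmetric directed graph by replacing each edge with both orientations. Thus taking $f_{A}$ to be the identity map into $B^{1}$ (with $B$ the appropriate digraph built from $A$) and $I$ the obvious quantifier-free identification of the binary relation symbols yields the three required configurations.

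The substantive work lies in $\CC_{\TT} \subseteq \CC_{\DG}$ and $\CC_{\PO} \subseteq \CC_{\DG}$, which demand $\DG$-configurations into $\TT$ and into $\PO$. In both cases the idea is to double the universe: send each $a \in A$ to the pair $f_{A}(a) = ((a,0), (a,1)) \in B^{2}$, so that the direction of a potential edge in $A$ is recorded by a single edge between the two levels in $B$. For $\PO$, I would let $B$ be the strict partial order on $A \times 2$ generated by the relations $(a,0) < (a,1)$ for all $a$ together with $(a,0) < (b,1)$ for $a \neq b$ with $A \models R(a,b)$; since no level-$1$ element lies below anything, every chain has length at most two and transitivity is automatic. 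The quantifier-free interpretation $I(R)(x_0, x_1, y_0, y_1) = (x_0 < y_1) \wedge (x_0 \neq y_0)$ then recovers $R$, with the $x_0 \neq y_0$ conjunct killing the reflexive case $a = b$. For $\TT$, I would orient $(a,0) \to (a,1)$ always, orient $(a,0) \to (b,1)$ iff $A \models R(a,b)$ for $a \neq b$, and fill in the remaining same-level pairs via any linear order on $A$ so that $B$ is a tournament; the analogous formula $I(R)(x_0, x_1, y_0, y_1) = R(x_0, y_1) \wedge (x_0 \neq y_0)$ works.

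The main obstacle is checking that the doubled structures genuinely belong to the target classes and that the chosen quantifier-free formulas faithfully recover the relation $R$ on $A$: for $\PO$ one must verify transitivity, antisymmetry, and irreflexivity (all automatic from the bipartite shape), and for $\TT$ one must verify that the construction produces a legitimate tournament despite the ambient need to orient the same-level pairs arbitrarily. Once these verifications are in hand, Proposition~\ref{Proposition_ClassContainment} delivers the remaining inclusions and the chain $\CC_{\PO} = \CC_{\TT} = \CC_{\DG} = \CC_{\GG}$ follows.
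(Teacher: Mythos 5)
Your proof is correct, and it follows the same basic template as the paper's (Proposition \ref{Proposition_ClassContainment} plus the vertex-doubling trick $a \mapsto ((a,0),(a,1))$), but it is organized around a different hub. The paper takes $\GG$ as the hub: it uses Lemma \ref{Lemma_DiagraphGraph} for $\CC_{\GG} \subseteq \CC_{\DG}$, gets $\CC_{\DG} \subseteq \CC_{\PO}, \CC_{\TT}, \CC_{\GG}$ in one stroke from Lemma \ref{Lemma_ReductiveSubclass} (since $\PO$, $\TT$, $\GG$ are reductive subclasses of $\DG$), and then builds $\GG$-configurations into $\PO$ and into $\TT$; because the graph relation is symmetric while $\lhd$ and the tournament relation are not, its interpreting formulas must symmetrize, e.g.\ $I(E) = x_{0,0} \lhd x_{1,1} \wedge x_{1,0} \lhd x_{0,1}$. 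You take $\DG$ as the hub and build $\DG$-configurations into $\PO$ and into $\TT$ directly, which lets a single cross-level atom record the orientation of each arc, at the cost of adding the conjunct $x_0 \neq y_0$ to kill the diagonal; this is legitimate, since $I(R)$ need only be quantifier-free and equality is available. Your "identity" configurations for the easy inclusions are exactly the paper's reductive-subclass observation spelled out by hand, so nothing is lost there. Two small points to make explicit in a write-up: your off-diagonal conjunct does the work that the paper's second atomic conjunct does automatically, and both arguments (yours slightly more visibly) rely on members of $\DG$ being loopless, since $I(R)(f_A(a),f_A(a))$ is identically false in your encodings; with the standard convention that directed graphs are irreflexive this is harmless. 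Overall your route proves the slightly stronger-looking facts that there are $\DG$-configurations into $\PO$ and $\TT$, whereas the paper only needs $\GG$-configurations there and lets the chain of inclusions close the loop; both yield the stated equalities.
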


\begin{proof}
	By Lemma \ref{Lemma_DiagraphGraph}, there exists a $\DG$-configuration into $\GG$, hence, by Proposition \ref{Proposition_ClassContainment}, $\CC_{\GG} \subseteq \CC_{\DG}$.  Since $\PO$, $\TT$, and $\GG$ are each reductive subclasses of $\DG$, by Lemma \ref{Lemma_ReductiveSubclass}, $\CC_{\DG} \subseteq \CC_{\PO}, \CC_{\TT}, \CC_{\GG}$.  In light of Proposition \ref{Proposition_ClassContainment}, it suffices to show that there is a $\GG$-configuration into $\PO$ and a $\GG$-configuration into $\TT$.
	
	For the first, let
	\[
		I(E)(x_{0,0}, x_{0,1}, x_{1,0}, x_{1,1}) = x_{0,0} \lhd x_{1,1} \wedge x_{1,0} \lhd x_{0,1}.
	\]
	For any $A \in \GG$, consider the poset $(A \times 2, \lhd)$ given by setting, for all $(a,t), (b,s) \in A \times 2$,
	\[
		(a,t) \lhd (b,s) \Longleftrightarrow t < s \text{ and } A \models E(a,b).
	\]
	Clearly $A \times 2 \in \PO$.  Finally, let $f_A : A \rightarrow (A \times 2)^2$ be given by, for all $a \in A$,
	\[
		f_A(a) = ((a,0), (a,1)).
	\]
	One can check that $(I, f_A)_{A \in \GG}$ is a $\GG$-configuration into $\PO$.
	
	For the second, use $R$ as the binary relation symbol for $\TT$.  Let
	\[
		I(E)(x_{0,0}, x_{0,1}, x_{1,0}, x_{1,1}) = R(x_{0,0}, x_{1,1}) \wedge R(x_{1,0}, x_{0,1}).
	\]
	For any $A \in \GG$, consider the tournament $(A \times 2, R)$ given by setting, for all $(a,t), (b,s) \in A \times 2$,
	\begin{align*}
		R((a,t), (b,s)) \Longleftrightarrow \ & t < s \text{ and } a = b; \text{ or } \\ & t < s \text{ and } A \models E(a,b); \text{ or } \\ & s < t \text{ and } A \models \neg E(a,b).
	\end{align*}
	Clearly $A \times 2 \in \TT$.  Finally, let $f_A : A \rightarrow (A \times 2)^2$ be given by, for all $a \in A$,
	\[
		f_A(a) = ((a,0), (a,1)).
	\]
	One can check that $(I, f_A)_{A \in \GG}$ is a $\GG$-configuration into $\TT$.
	
	By Proposition \ref{Proposition_ClassContainment}, we conclude that
	\[
		\CC_{\PO} = \CC_{\TT} = \CC_{\DG} = \CC_{\GG}.
	\]
\end{proof}

\begin{example}
    The above proof shows that the converse of Lemma \ref{Lemma_ReductiveSubclass} fails.  Namely, note that $\CC_{\PO} = \CC_{\TT}$, even though $\PO$ is not a reductive subclass of $\TT$ nor vice versa.
\end{example}

\subsection{Configuration Classes of Products}

Next, we turn our attention to the interaction between configurations and the \wreathproduct.

\begin{proposition}\label{Proposition_ConfigurationWreath}
    For each $t < 2$, let $L_t$ be a relational language and let $\KK_t$ be a finite class of $L_t$-structures closed under isomorphism.  Let $L$ be a language, $M$ an $L$-structure, and $n_0, n_1 < \omega$.  If there exists a $\KK_0$-configuration into $M^{n_0}$ and an injective $\KK_1$-configuration into $M^{n_1}$, then there exists a $(\KK_0 \wreath \KK_1)$-configuration into $M^{n_0 + n_1}$.
\end{proposition}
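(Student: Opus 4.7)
The plan is to build a $(\KK_0 \wreath \KK_1)$-configuration into $M^{n_0+n_1}$ by piecing together the two given configurations coordinatewise. Let $(I_0, f_A)_{A \in \KK_0}$ and $(I_1, g_B)_{B \in \KK_1}$ denote the given configurations, with the latter injective. For each $C \in \KK_0 \wreath \KK_1$ fix a decomposition $C = \bigsqcup_{b \in B} A_b$ with $B \in \KK_1$ and $A_b \in \KK_0$ for every $b \in B$, and define $h_C : C \to M^{n_0+n_1}$ by $h_C(a,b) = (f_{A_b}(a), g_B(b))$.

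Next I would define the interpretation $I : \sig(L_2) \to \Fml(L,M)$ on the signature $\sig(L_0) \sqcup \sig(L_1) \sqcup \{E\}$ of the lexicographic product. Splitting each tuple variable into a length-$n_0$ block $\ox$ followed by a length-$n_1$ block $\oy$, for $R \in \sig(L_0)$ of arity $n$ set
\[
	I(R)((\ox_0,\oy_0),\dots,(\ox_{n-1},\oy_{n-1})) = I_0(R)(\ox_0,\dots,\ox_{n-1}) \wedge \bigwedge_{i<j<n} \oy_i = \oy_j;
\]
for $R \in \sig(L_1)$ of arity $n$ set
\[
	I(R)((\ox_0,\oy_0),\dots,(\ox_{n-1},\oy_{n-1})) = I_1(R)(\oy_0,\dots,\oy_{n-1});
\]
and set $I(E)((\ox_0,\oy_0),(\ox_1,\oy_1))$ to be $\oy_0 = \oy_1$ (coordinatewise equality, which is available in any language with equality).

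Finally, I would verify the configuration biconditional symbol-by-symbol using the clauses of Definition \ref{Definition_Wreath}. The $\sig(L_1)$ case is immediate from the defining property of $(I_1, g_B)$ since $h_C$ places $g_B(b)$ in the second block. For the new symbol $E$ and for $R \in \sig(L_0)$, the necessary ingredient is that $g_B(b_i) = g_B(b_j)$ if and only if $b_i = b_j$, which is precisely the injectivity assumption on the $\KK_1$-configuration. Once all the $\oy$-coordinates agree, the $\sig(L_0)$-case reduces to applying $(I_0, f_{A_b})$ inside the single fiber $A_b$, which gives the required equivalence.

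I do not anticipate a serious obstacle; the subtle point is recognizing that injectivity of $g_B$ is exactly what makes both the equivalence relation $E$ and the ``single-fiber'' side condition on $\sig(L_0)$-relations definable over $M$, whereas no injectivity is needed on the $f_A$ since they are only consulted within a single fiber at a time.
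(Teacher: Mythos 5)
Your proposal is correct and matches the paper's proof essentially line for line: the same map $(a,b) \mapsto (f_{A_b}(a), g_B(b))$, the same interpretation $I$ (including the conjunct forcing equality of the second blocks for $\sig(L_0)$-symbols and the equality formula for $E$), and the same observation that injectivity of the $\KK_1$-configuration is exactly what makes the $E$-clause and the single-fiber condition go through.
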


\begin{proof}
    Let $(I_0, f_{A,0})_{A \in \KK_0}$ be a $\KK_0$-configuration and let $(I_1, f_{A,1})_{A \in \KK_1}$ be an injective $\KK_1$-configuration.  Let $L_2$ be the language of $\KK_0 \wreath \KK_1$ and define $I : \sig(L_2) \rightarrow \Fml(L,M)$ as follows:
    \begin{enumerate}
        \item if $R \in \sig(L_0)$ of arity $n$, let
        \begin{align*}
        	I(R) & (\oy_0, \oz_0, \oy_1, \oz_1, ..., \oy_{n-1}, \oz_{n-1}) = \\ & I_0(R)(\oy_0, \oy_1, ..., \oy_{n-1}) \wedge \bigwedge_{i, j < n} \oz_i = \oz_j.
        \end{align*}
        \item if $R \in \sig(L_1)$ of arity $n$, let
        \[
        	I(R)(\oy_0, \oz_0, \oy_1, \oz_1, ..., \oy_{n-1}, \oz_{n-1}) = I_1(R)(\oz_0, \oz_1, ..., \oz_{n-1}).
        \]
        \item $I(E)(\oy_0, \oz_0, \oy_1, \oz_1) = [\oz_0 = \oz_1]$.
    \end{enumerate}
    Fix $B \in \KK_1$ and $A_b \in \KK_0$ for each $b \in B$ and let $C = \bigsqcup_{b \in B} A_b$.  We define $f_C : C \rightarrow M^{n_0 + n_1}$ as follows: For all $b \in B$ and $a \in A_b$, let
    \[
        f_C(a, b) = (f_{A_b,0}(a), f_{B,1}(b)).
    \]
    We show that $(I, f_C)_{C \in \KK_0 \wreath \KK_1}$ is a $(\KK_0 \wreath \KK_1)$-configuration into $M^{n_0 + n_1}$.
    
    Fix $B \in \KK_1$ and $A_b \in \KK_0$ for each $b \in B$ and let $C = \bigsqcup_{b \in B} A_b$.  Fix $R \in \sig(L_2)$ of arity $n$, $\ob \in B^n$, and $\oa$ such that $a_i \in A_{b_i}$ for all $i < n$.  If $R \in \sig(L_0)$, then
    \begin{align*}
        & A \models R((a_0, b_0), \dots, (a_{n-1}, b_{n-1})) \Longleftrightarrow \\ & b_i = b_j \text{ for all $i,j$ and } A_{b_0} \models R(\oa) \Longleftrightarrow \\ & f_{B,1}(b_i) = f_{B,1}(b_j) \text{ for all $i,j$ and } M \models I_0(R)(f_{A_{b_0},0}(\oa)) \Longleftrightarrow \\ & M \models I(R)(f_C(a_0, b_0), \dots, f_C(a_{n-1}, b_{n-1})).
    \end{align*}
    (Note that we are relying here on the fact that $f_{B,1}$ is injective.)  If $R \in \sig(L_1)$, then
    \begin{align*}
        & A \models R((a_0, b_0), \dots, (a_{n-1}, b_{n-1})) \Longleftrightarrow \\ & B \models R(\ob) \Longleftrightarrow M \models I_1(R)(f_{B,1}(\ob)) \Longleftrightarrow \\ & M \models I(R)(f_C(a_0, b_0), \dots, f_C(a_{n-1}, b_{n-1})).
    \end{align*}
    Finally, if $R = E$ (and $n = 2$), then
    \begin{align*}
        A \models E((a_0, b_0), (a_1, b_1)) & \Longleftrightarrow b_0 = b_1 \Longleftrightarrow f_{B,1}(b_0) = f_{B,1}(b_1) \\ &  \Longleftrightarrow M \models I(E)(f_C(a_0, b_0), f_C(a_1, b_1)).
    \end{align*}
    (Again, the relies on the fact that $f_{B,1}$ is injective.)  This concludes the proof.
\end{proof}

Note that, in the previous proposition, we need the assumption of injectivity on the $\KK_1$-configuration to get a $(\KK_0 \wreath \KK_1)$-configuration into $M^{n_0 + n_1}$.  If $M$ is infinite, then we can drop that assumption and use Lemma \ref{Lemma_MakeInjective} to get a $(\KK_0 \wreath \KK_1)$-configuration into $M^{n_0 + n_1 + 1}$.

\begin{proposition}\label{Proposition_ConfigurationWreathConv}
    For each $t < 2$, let $L_t$ be a relational language and let $\KK_t$ be a class of finite $L_t$-structures closed under isomorphism that contains a singleton structure.  Let $L$ be a language, $M$ an $L$-structure, and $n < \omega$.  If there exists a $(\KK_0 \wreath \KK_1)$-configuration into $M^n$, then there exists a $\KK_0$-configuration into $M^n$ and a $\KK_1$-configuration into $M^n$.
\end{proposition}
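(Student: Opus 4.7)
The plan is to extract both a $\KK_0$-configuration and a $\KK_1$-configuration from the given $(\KK_0\wreath\KK_1)$-configuration by using the singleton structures of the opposite class as ``padding'' to embed $\KK_0$ and $\KK_1$ into $\KK_0\wreath\KK_1$ in the most trivial way.

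Fix a singleton $\{*_0\} \in \KK_0$ and a singleton $\{*_1\} \in \KK_1$, and let $(I, f_C)_{C \in \KK_0 \wreath \KK_1}$ be the given $(\KK_0\wreath\KK_1)$-configuration into $M^n$. For the $\KK_0$-configuration, define $I_0 : \sig(L_0) \to \Fml(L,M)$ to be the restriction $I \restriction \sig(L_0)$. For each $A \in \KK_0$, consider $C_A := A \wreath \{*_1\} \in \KK_0 \wreath \KK_1$, whose universe is $\{(a,*_1) : a \in A\}$, and set
\[
    f_{A,0}(a) := f_{C_A}(a,*_1).
\]
For the $\KK_1$-configuration, symmetrically, set $I_1 := I \restriction \sig(L_1)$ and, for each $B \in \KK_1$, let $C^B := \{*_0\} \wreath B$ and define $f_{B,1}(b) := f_{C^B}(*_0, b)$.

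The verification that $(I_0, f_{A,0})_{A \in \KK_0}$ is a $\KK_0$-configuration is immediate from the definition of the \wreathproduct: for $R \in \sig(L_0)$ of arity $k$ and $\oa \in A^k$, since all second coordinates in $C_A$ equal $*_1$, we have
\[
    A \models R(\oa) \iff C_A \models R\bigl((a_0,*_1),\dots,(a_{k-1},*_1)\bigr) \iff M \models I(R)\bigl(f_{A,0}(\oa)\bigr),
\]
where the first biconditional uses clause (1) of Definition \ref{Definition_Wreath} and the second is the configuration property of $(I,f_C)$. The argument for $(I_1, f_{B,1})_{B \in \KK_1}$ is essentially identical, this time invoking clause (2) of Definition \ref{Definition_Wreath}, since $\{*_0\} \wreath B$ puts $A_b = \{*_0\}$ above each $b \in B$ and the $L_1$-structure on the second coordinate is a faithful copy of $B$.

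There is no real obstacle here; the only thing to be careful of is that the singletons in $\KK_0$ and $\KK_1$ exist (which is the stated hypothesis) so that the padded structures $A \wreath \{*_1\}$ and $\{*_0\} \wreath B$ actually lie in $\KK_0 \wreath \KK_1$, making $f_{C_A}$ and $f_{C^B}$ legitimate components of the given configuration. Since $\sig(L_0)$ and $\sig(L_1)$ are disjoint (by construction of the language $L_2$ of the \wreathproduct), the restrictions $I \restriction \sig(L_0)$ and $I \restriction \sig(L_1)$ are unambiguously defined, and no parameters or additional saturation assumptions on $M$ are needed.
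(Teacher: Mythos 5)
Your proof is correct and is essentially the paper's argument: the paper simply packages your singleton-padding construction as the observation that $\KK_0$ (resp.\ $\KK_1$) is a \emph{reductive subclass} of $\KK_0 \wreath \KK_1$ and then invokes Lemma \ref{Lemma_ReductiveSubclass}, whereas you unfold that lemma explicitly via $A \wreath \{*_1\}$ and $\{*_0\} \wreath B$. The verification you give is exactly what is hidden inside that citation, so there is no substantive difference.
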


\begin{proof}
    Since $\KK_1$ contains a singleton structure, $\KK_0$ is a reductive subclass of $\KK_0 \wreath \KK_1$.  By Lemma \ref{Lemma_ReductiveSubclass}, there exists a $\KK_0$-configuration into $M^n$.  Similarly, there exists a $\KK_1$-configuration into $M^n$.
\end{proof}

\begin{corollary}\label{Corollary_ClassWreath}
    For each $t < 2$, let $L_t$ be a relational language and let $\KK_t$ be a class of finite $L_t$-structures closed under isomorphism that contains a singleton structure.  Then,
    \[
        \CC_{\KK_0} \cap \CC_{\KK_1} = \CC_{\KK_0 \wreath \KK_1}.
    \]
\end{corollary}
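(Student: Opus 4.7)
The proof is essentially a bookkeeping exercise combining the two preceding propositions, so my plan is to split into the two inclusions and cite each one appropriately.

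For the forward inclusion $\CC_{\KK_0 \wreath \KK_1} \subseteq \CC_{\KK_0} \cap \CC_{\KK_1}$, I would simply invoke Proposition \ref{Proposition_ConfigurationWreathConv}. Given a complete $L$-theory $T$ with infinite models that admits a $(\KK_0 \wreath \KK_1)$-configuration, there exist $M \models T$ and $n < \omega$ with such a configuration into $M^n$. Since both $\KK_0$ and $\KK_1$ contain a singleton structure, Proposition \ref{Proposition_ConfigurationWreathConv} yields a $\KK_0$-configuration and a $\KK_1$-configuration into $M^n$. Hence $T \in \CC_{\KK_0}$ and $T \in \CC_{\KK_1}$.

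For the reverse inclusion $\CC_{\KK_0} \cap \CC_{\KK_1} \subseteq \CC_{\KK_0 \wreath \KK_1}$, fix a complete $L$-theory $T$ with infinite models such that $T \in \CC_{\KK_0} \cap \CC_{\KK_1}$. The plan is to work inside a single sufficiently saturated $M \models T$ so that the two configurations can be combined. Choose $M \models T$ that is $\kappa$-saturated for $\kappa$ larger than the number of isomorphism classes in either $\KK_0$ or $\KK_1$; by Lemma \ref{Lemma_SaturatedConfig} applied to each class separately, we obtain a $\KK_0$-configuration into $M^{n_0}$ and a $\KK_1$-configuration into $M^{n_1}$ for some $n_0, n_1 < \omega$. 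Since $M$ is infinite, Lemma \ref{Lemma_MakeInjective} allows us to upgrade the $\KK_1$-configuration to an injective one into $M^{n_1 + 1}$. Now Proposition \ref{Proposition_ConfigurationWreath} produces a $(\KK_0 \wreath \KK_1)$-configuration into $M^{n_0 + n_1 + 1}$, so $T \in \CC_{\KK_0 \wreath \KK_1}$.

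No step here is especially delicate; the only thing to be careful about is the injectivity hypothesis in Proposition \ref{Proposition_ConfigurationWreath}, which is the reason Lemma \ref{Lemma_MakeInjective} must be invoked before combining the two configurations, and the reason the singleton assumption appears (so that the forward direction can apply Proposition \ref{Proposition_ConfigurationWreathConv}). In particular, no new combinatorics is needed beyond what has already been established.
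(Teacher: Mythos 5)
Your proof is correct and follows essentially the same route as the paper, which likewise deduces the corollary from Proposition \ref{Proposition_ConfigurationWreathConv} for one inclusion and from Lemma \ref{Lemma_MakeInjective} together with Proposition \ref{Proposition_ConfigurationWreath} for the other. The only difference is that you explicitly invoke Lemma \ref{Lemma_SaturatedConfig} to place both configurations inside a single sufficiently saturated model before combining them -- a detail the paper's proof leaves implicit, and a reasonable one to spell out since Proposition \ref{Proposition_ConfigurationWreath} requires both configurations to live in powers of the same structure $M$.
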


\begin{proof}
	\if\papermode0
	    If $T \in \CC_{\KK_0} \cap \CC_{\KK_1}$, then $T$ admits a $\KK_t$-configuration for each $t < 2$.  By Lemma \ref{Lemma_MakeInjective}, $T$ admits an injective $\KK_1$-configuration.  By Proposition \ref{Proposition_ConfigurationWreath}, $T$ admits a $(\KK_0 \wreath \KK_1)$-configuration.  Hence, $T \in \CC_{\KK_0 \wreath \KK_1}$.  Conversely, if $T \in \CC_{\KK_0 \wreath \KK_1}$, then $T$ admits a $(\KK_0 \wreath \KK_1)$-configuration.  By Proposition \ref{Proposition_ConfigurationWreathConv}, $T$ admits a $\KK_t$-configuration for each $t < 2$.  Thus, $T \in \CC_{\KK_0} \cap \CC_{\KK_1}$.
	\else
		Follows from Lemma \ref{Lemma_MakeInjective}, Proposition \ref{Proposition_ConfigurationWreath}, and Proposition \ref{Proposition_ConfigurationWreathConv}.
	\fi
\end{proof}

\begin{example}
    For this anaylsis, it is important that $\CC_{\KK}$ only contains complete theories \emph{with infinite models}.  For example, suppose that $\KK$ is the class of all finite sets with no structure.  Then, $\KK \wreath \KK$ is the class of all finite equivalence relations, $\EE$.  Any consistent theory admits a $\KK$-configuration, but only a theory with an infinite model admits an $\EE$-configuration.
    \if\papermode0
        
        To see this, suppose that $L$ is a language and $(I, f_A)_{A \in \EE}$ be a $\EE$-configuration into $M^n$ for some $L$-structure $M$ and some $n < \omega$.  For any $m < \omega$, let $A$ be the structure that contains $m$ elements, $a_0, ..., a_{m-1}$, that are pairwise $E$-inequivalent.  Then, for all $i, j < m$, $A \models E(a_i, a_j)$ if and only if $i = j$.  Thus,
        \[
            M \models I(E)(f_A(a_i), f_A(a_j)) \Longleftrightarrow i = j.
        \]
        Thus, for all $i, j < m$, if $f_A(a_i) = f_A(a_j)$, then $M \models I(E)(f_A(a_i), f_A(a_j))$, so $i = j$.  That is, $f_A$ is injective.  Therefore, $|M^n| \ge m$.  Since $m$ was arbitrary, $|M^n| \ge \aleph_0$.  Therefore, $M$ is infinite.
    \fi
\end{example}

Next, we analyze the the interaction between configurations and the \directproduct.

\begin{proposition}\label{Proposition_ConfigurationDirect}
    For each $t < 2$, let $L_t$ be a relational language and let $\KK_t$ be a class of finite $L_t$-structures closed under isomorphism.  Let $L$ be a language, $M$ an $L$-structure, and $n_0, n_1 < \omega$.  If there exists an injective $\KK_0$-configuration into $M^{n_0}$ and an injective $\KK_1$-configuration into $M^{n_1}$, then there exists a $(\KK_0 \btimes \KK_1)$-configuration into $M^{n_0 + n_1}$.
\end{proposition}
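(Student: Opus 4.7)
The plan is to mirror the construction used for the lexicographic product in Proposition \ref{Proposition_ConfigurationWreath}, adapted for the full product's language. Let $(I_0, f_{A,0})_{A \in \KK_0}$ and $(I_1, f_{B,1})_{B \in \KK_1}$ be the given injective configurations, let $L_2$ be the language of $\KK_0 \btimes \KK_1$, and define $I : \sig(L_2) \to \Fml(L,M)$ by setting, for $R \in \sig(L_0)$ of arity $n$,
\[
    I(R)(\oy_0, \oz_0, \dots, \oy_{n-1}, \oz_{n-1}) = I_0(R)(\oy_0, \dots, \oy_{n-1}),
\]
for $R \in \sig(L_1)$ of arity $n$,
\[
    I(R)(\oy_0, \oz_0, \dots, \oy_{n-1}, \oz_{n-1}) = I_1(R)(\oz_0, \dots, \oz_{n-1}),
\]
and $I(E_0)(\oy_0, \oz_0, \oy_1, \oz_1) = [\oy_0 = \oy_1]$, $I(E_1)(\oy_0, \oz_0, \oy_1, \oz_1) = [\oz_0 = \oz_1]$.

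Next, for each $C \in \KK_0 \btimes \KK_1$, fix (by definition of the full product of classes) $A \in \KK_0$ and $B \in \KK_1$ with $C \subseteq A \btimes B$, and define $f_C : C \to M^{n_0 + n_1}$ by $f_C(a,b) = (f_{A,0}(a), f_{B,1}(b))$. I would then verify the configuration condition separately for each kind of symbol in $\sig(L_2)$. For $R \in \sig(L_0)$ the equivalence is immediate from Definition \ref{Definition_Direct}(1) together with the fact that $(I_0, f_{A,0})$ is a configuration; for $R \in \sig(L_1)$ it is symmetric, using Definition \ref{Definition_Direct}(2) and $(I_1, f_{B,1})$.

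The main obstacle is handling $E_0$ and $E_1$, and this is exactly where the injectivity hypothesis enters. For $E_0$, using Definition \ref{Definition_Direct}(3), we have
\[
    C \models E_0((a_0,b_0),(a_1,b_1)) \Longleftrightarrow a_0 = a_1 \Longleftrightarrow f_{A,0}(a_0) = f_{A,0}(a_1),
\]
where the second equivalence requires injectivity of $f_{A,0}$; by the definition of $I(E_0)$, this last condition is exactly $M \models I(E_0)(f_C(a_0,b_0), f_C(a_1,b_1))$. The case of $E_1$ is analogous, using injectivity of $f_{B,1}$. Assembling these verifications shows that $(I, f_C)_{C \in \KK_0 \btimes \KK_1}$ is a $(\KK_0 \btimes \KK_1)$-configuration into $M^{n_0 + n_1}$, completing the proof. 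Note that, as in the remark following Proposition \ref{Proposition_ConfigurationWreath}, if $M$ is infinite one can drop the injectivity hypothesis at the cost of one extra coordinate by first invoking Lemma \ref{Lemma_MakeInjective}.
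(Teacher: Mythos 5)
Your proposal is correct and follows essentially the same route as the paper's proof: the same definition of $I$ on $\sig(L_0)$, $\sig(L_1)$, $E_0$, and $E_1$, the same componentwise definition $f_C(a,b) = (f_{A,0}(a), f_{B,1}(b))$, and the same use of injectivity precisely in the $E_0$ and $E_1$ verifications. No gaps.
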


\begin{proof}
    For each $t < 2$, let $(I_t, f_{A,t})_{A \in \KK_t}$ be a $\KK_t$-configuration into $M^{n_t}$.  Let $L_2$ be the language of $\KK_0 \btimes \KK_1$ and define $I : \sig(L_2) \rightarrow \Fml(L,M)$ as follows:
    \begin{enumerate}
        \item for $t < 2$ and $R \in \sig(L_t)$ of arity $n$, let
            \[
                I(R)(\oy_{0,0}, \oy_{0,1}, \oy_{1,0}, \oy_{1,1}, ..., \oy_{n-1,0}, \oy_{n-1,1}) = I_t(R)(\oy_{0,t}, \oy_{1,t}, ..., \oy_{n-1,t}).
            \]
        \item for $t < 2$, let
            \[
                I(E_t)(\oy_{0,0}, \oy_{0,1}, \oy_{1,0}, \oy_{1,1}) = \left[ \oy_{0,t} = \oy_{1,t} \right].
            \]
    \end{enumerate}
    For each $t < 2$, fix $A_t \in \KK_t$ and let $A \subseteq A_0 \btimes A_1$.  Define $f_A : A \rightarrow M^{n_0 + n_1}$ as follows: For all $(a_0, a_1) \in A$, let
    \[
        f_A(a_0, a_1) = (f_{A_0}(a_0), f_{A_1}(a_1)).
    \]
	\if\papermode0
	    We show that $(I, f_A)_{A \in \KK_0 \btimes \KK_1}$ is a $(\KK_0 \btimes \KK_1)$-configuration into $M^{n_0 + n_1}$.
	
	    Fix $A, A_0, A_1$ as above.  Fix $t < 2$, fix $R \in \sig(L_t)$ of arity $n$, fix $a_{i,j} \in A_j$ for $i < n$ and $j < 2$.  Then,
	    \begin{align*}
			& A \models R((a_{0,0}, a_{0,1}), ..., (a_{n-1,0}, a_{n-1,1})) \Longleftrightarrow \\ & A_t \models R(a_{0,t}, ..., a_{n-1,t}) \Longleftrightarrow \\ & M \models I_t(R)(f_{A_t,t}(a_{0,t}), ..., f_{A_t,t}(a_{n-1,t})) \Longleftrightarrow \\ & M \models I(R)(f_A(a_{0,0}, a_{0,1}), ..., f_A(a_{n-1,0}, a_{n-1,1})).
		\end{align*}
		Similarly, for $t < 2$ and $a_{i,j} \in A_j$ for $i < 2$ and $j < 2$,
		\begin{align*}
			& A \models E_t((a_{0,0}, a_{0,1}), (a_{0,0}, a_{0,1})) \Longleftrightarrow \\ & a_{0,t} = a_{1,t} \Longleftrightarrow f_{A_t,t}(a_{0,t}) = f_{A_t,t}(a_{1,t}) \Longleftrightarrow \\ & M \models I(E_t)(f_A(a_{0,0}, a_{0,1}), f_A(a_{1,0}, a_{1,1})).
		\end{align*}
    	(Here we are using that $f_{A_t,t}$ is injective.)  This concludes the proof.
    \else
    	Similarly to Proposition \ref{Proposition_ConfigurationWreath}, we see that $(I, f_A)_{A \in \KK_0 \btimes \KK_1}$ is a $(\KK_0 \btimes \KK_1)$-configuration into $M^{n_0 + n_1}$.
    \fi
\end{proof}

\begin{proposition}\label{Proposition_ConfigurationDirectConv}
    For each $t < 2$, let $L_t$ be a relational language and let $\KK_t$ be a class of finite $L_t$-structures closed under isomorphism that contains a non-empty structure.  Let $L$ be a language, $M$ an $L$-structure, and $n < \omega$.  If there exists a $(\KK_0 \btimes \KK_1)$-configuration into $M^n$, then there exists a $\KK_0$-configuration into $M^n$ and a $\KK_1$-configuration into $M^n$.
\end{proposition}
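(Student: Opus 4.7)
The plan is to show that under the given hypotheses, both $\KK_0$ and $\KK_1$ are reductive subclasses of $\KK_0 \btimes \KK_1$, and then to apply Lemma \ref{Lemma_ReductiveSubclass}. This is essentially the same strategy used in Proposition \ref{Proposition_ConfigurationWreathConv}, but here we need the ``non-empty structure'' hypothesis in place of the ``singleton structure'' hypothesis because the full product requires filling in a second coordinate.

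To see that $\KK_0$ is a reductive subclass of $\KK_0 \btimes \KK_1$, first note that $\sig(L_0) \subseteq \sig(L_2)$ by construction, where $L_2$ is the language of the full product. Now fix $A \in \KK_0$. By hypothesis, $\KK_1$ contains some non-empty structure $B$; pick any $b \in B$. Consider the substructure $C$ of $A \btimes B$ whose universe is $\{(a,b) : a \in A\}$. By definition of $\KK_0 \btimes \KK_1$, we have $C \in \KK_0 \btimes \KK_1$. Moreover, for any $R \in \sig(L_0)$ of arity $m$ and $a_0, \dots, a_{m-1} \in A$, by the definition of the full product,
\[
	C \models R((a_0,b), \dots, (a_{m-1}, b)) \Longleftrightarrow A \models R(a_0, \dots, a_{m-1}).
\]
Hence the map $a \mapsto (a,b)$ is an $L_0$-isomorphism from $A$ to $C|_{L_0}$. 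Thus $\KK_0$ is a reductive subclass of $\KK_0 \btimes \KK_1$, and by Lemma \ref{Lemma_ReductiveSubclass}, any $(\KK_0 \btimes \KK_1)$-configuration into $M^n$ yields a $\KK_0$-configuration into $M^n$.

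By a symmetric argument, using the assumption that $\KK_0$ contains a non-empty structure, $\KK_1$ is a reductive subclass of $\KK_0 \btimes \KK_1$, so we likewise obtain a $\KK_1$-configuration into $M^n$. There is no real obstacle in this proof; the ``non-empty'' hypothesis is what allows the necessary coordinate-filling, and everything else reduces immediately to Lemma \ref{Lemma_ReductiveSubclass}. Combined with Proposition \ref{Proposition_ConfigurationDirect} (and Lemma \ref{Lemma_MakeInjective} to inject the configurations when $M$ is infinite and sufficiently saturated), this yields the analogue of Corollary \ref{Corollary_ClassWreath}, namely $\CC_{\KK_0} \cap \CC_{\KK_1} = \CC_{\KK_0 \btimes \KK_1}$ under mild assumptions.
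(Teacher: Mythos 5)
Your proof is correct and follows exactly the paper's approach: the paper likewise observes that the non-emptiness hypothesis makes each $\KK_t$ a reductive subclass of $\KK_0 \btimes \KK_1$ and then invokes Lemma \ref{Lemma_ReductiveSubclass}. Your explicit verification of the reductive-subclass claim (via the map $a \mapsto (a,b)$ for a fixed $b$ in a non-empty $B \in \KK_1$) is a correct filling-in of a step the paper leaves implicit.
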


\begin{proof}
	Since $\KK_1$ contains a non-empty structure, $\KK_0$ is a reductive subclass of $\KK_0 \btimes \KK_1$.  By Lemma \ref{Lemma_ReductiveSubclass}, there exists a $\KK_0$-configuration into $M^n$.  Similarly, there exists a $\KK_1$-configuration into $M^n$.
\end{proof}

\begin{corollary}\label{Corollary_ClassDirect}
    For each $t < 2$, let $L_t$ be a relational language and let $\KK_t$ be a class of finite $L_t$-structures closed under isomorphism that contains a non-empty structure.  Then,
    \[
        \CC_{\KK_0} \cap \CC_{\KK_1} = \CC_{\KK_0 \btimes \KK_1}.
    \]
\end{corollary}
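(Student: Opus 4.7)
The plan is to derive this corollary directly from the preceding two propositions combined with Lemma~\ref{Lemma_MakeInjective}, in complete analogy with Corollary~\ref{Corollary_ClassWreath}. There is no real obstacle; both inclusions are essentially bookkeeping. The only subtlety is ensuring that the configurations used as inputs to Proposition~\ref{Proposition_ConfigurationDirect} are injective, which is where Lemma~\ref{Lemma_MakeInjective} enters.

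For the forward inclusion $\CC_{\KK_0} \cap \CC_{\KK_1} \subseteq \CC_{\KK_0 \btimes \KK_1}$, I would start by fixing a complete $L$-theory $T$ with infinite models in the intersection, and choose $M \models T$ sufficiently saturated (e.g., $\aleph_1$-saturated, using that each $\KK_t$ has at most countably many isomorphism types once relevant; if not, one simply takes $M$ saturated enough to realize the needed types via Lemma~\ref{Lemma_SaturatedConfig}). By definition $T$ admits a $\KK_0$-configuration and a $\KK_1$-configuration; applying Lemma~\ref{Lemma_MakeInjective} to each produces injective $\KK_t$-configurations into $M^{n_t}$ for some $n_0, n_1 < \omega$ (possibly after enlarging the arities by one). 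Proposition~\ref{Proposition_ConfigurationDirect} now directly yields a $(\KK_0 \btimes \KK_1)$-configuration into $M^{n_0 + n_1}$, whence $T \in \CC_{\KK_0 \btimes \KK_1}$.

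For the reverse inclusion $\CC_{\KK_0 \btimes \KK_1} \subseteq \CC_{\KK_0} \cap \CC_{\KK_1}$, I would apply Proposition~\ref{Proposition_ConfigurationDirectConv} twice. Since both $\KK_0$ and $\KK_1$ are assumed to contain a non-empty structure, the proposition's hypotheses are met, and any $(\KK_0 \btimes \KK_1)$-configuration into $M^n$ restricts (via reductive subclasses, using Lemma~\ref{Lemma_ReductiveSubclass}) to a $\KK_0$-configuration and a $\KK_1$-configuration into $M^n$. Hence $T \in \CC_{\KK_0} \cap \CC_{\KK_1}$.

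The only thing that could conceivably trip one up is whether the hypothesis ``contains a non-empty structure'' is genuinely needed in both directions. It is used only in Proposition~\ref{Proposition_ConfigurationDirectConv} (to guarantee that $\KK_t$ is a reductive subclass of $\KK_0 \btimes \KK_1$), not in Proposition~\ref{Proposition_ConfigurationDirect}, so it suffices for both inclusions. The entire argument is thus a two-line composition of earlier results, parallel to the proof of Corollary~\ref{Corollary_ClassWreath}.
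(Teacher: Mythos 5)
Your proposal is correct and follows exactly the route the paper takes: the forward inclusion via Lemma~\ref{Lemma_MakeInjective} followed by Proposition~\ref{Proposition_ConfigurationDirect}, and the reverse inclusion via Proposition~\ref{Proposition_ConfigurationDirectConv}. The paper's own proof is just the one-line citation of these three results, so your write-up is a slightly more detailed version of the same argument.
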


\begin{proof}
	Follows from Lemma \ref{Lemma_MakeInjective}, Proposition \ref{Proposition_ConfigurationDirect}, and Proposition \ref{Proposition_ConfigurationDirectConv}.
\end{proof}

Finally, we look at the relationship between configurations and the free superposition.  This was examined in \cite{GuiPar}, though in a less general setting.  Nevertheless, the proofs there easily generalize to our current framework.

\begin{proposition}\label{Proposition_ConfigurationFree}
    For each $t < 2$, let $L_t$ be a relational language and let $\KK_t$ be a class of finite $L_t$-structures closed under isomorphism.  Let $L$ be a language, $M$ an $L$-structure, and $n_0, n_1 < \omega$.  If there exists a $\KK_0$-configuration into $M^{n_0}$ and a $\KK_1$-configuration into $M^{n_1}$, then there exists a $(\KK_0 * \KK_1)$-configuration into $M^{n_0 + n_1}$.
\end{proposition}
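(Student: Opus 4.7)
The plan is to construct the required configuration directly, using the key observation that a structure $A \in \KK_0 * \KK_1$ decomposes as a pair of reducts $A|_{L_0} \in \KK_0$ and $A|_{L_1} \in \KK_1$ sharing the same universe. So given configurations $(I_0, f_{A,0})_{A \in \KK_0}$ and $(I_1, f_{A,1})_{A \in \KK_1}$ into $M^{n_0}$ and $M^{n_1}$ respectively, we build a new configuration whose symbol-interpretation map handles the two disjoint parts of $\sig(L_2) = \sig(L_0) \sqcup \sig(L_1)$ independently, and whose structure-maps simply concatenate the two given maps.

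More precisely, I would split the variable tuples into blocks of length $n_0 + n_1$: write each block as $(\oy_i, \oz_i)$ with $|\oy_i| = n_0$ and $|\oz_i| = n_1$. For $R \in \sig(L_0)$ of arity $n$ set
\[
    I(R)(\oy_0, \oz_0, \dots, \oy_{n-1}, \oz_{n-1}) = I_0(R)(\oy_0, \dots, \oy_{n-1}),
\]
and for $R \in \sig(L_1)$ of arity $n$ set
\[
    I(R)(\oy_0, \oz_0, \dots, \oy_{n-1}, \oz_{n-1}) = I_1(R)(\oz_0, \dots, \oz_{n-1}).
\]
Since $\sig(L_0)$ and $\sig(L_1)$ are disjoint in $L_2$, this unambiguously defines $I : \sig(L_2) \rightarrow \Fml(L,M)$. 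For each $A \in \KK_0 * \KK_1$, define $f_A : A \rightarrow M^{n_0 + n_1}$ by
\[
    f_A(a) = (f_{A|_{L_0}, 0}(a), f_{A|_{L_1}, 1}(a)).
\]

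To verify this is a $(\KK_0 * \KK_1)$-configuration, I would fix $A \in \KK_0 * \KK_1$ and $R \in \sig(L_2)$ of arity $n$. If $R \in \sig(L_0)$, then for $\oa \in A^n$ the chain of equivalences
\[
    A \models R(\oa) \iff A|_{L_0} \models R(\oa) \iff M \models I_0(R)(f_{A|_{L_0},0}(\oa)) \iff M \models I(R)(f_A(\oa))
\]
holds, where the first step uses the definition of reduct, the second uses that $(I_0, f_{\cdot,0})$ is a $\KK_0$-configuration applied to $A|_{L_0} \in \KK_0$, and the third is just the definition of $I(R)$ and $f_A$. The case $R \in \sig(L_1)$ is symmetric.

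There is no substantive obstacle here — because $L_2$ has a disjoint union of signatures (no new ``cross'' symbols analogous to the $E$ of the lexicographic product or the $E_0, E_1$ of the full product), the two configurations can be combined componentwise with no interaction and with no need for injectivity hypotheses on either input. Essentially this is Proposition 4.17 of \cite{GuiPar} verbatim, with ``\fraisse~class'' weakened to ``class closed under isomorphism''; the proof there goes through unchanged in the present setting.
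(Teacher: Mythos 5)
Your construction is correct and is exactly the componentwise argument the paper relies on: the paper's own ``proof'' is the one-line citation to Proposition 4.11 of \cite{GuiPar} (you guessed 4.17, but the intended result is the same), and your write-up supplies the details in the same style as the paper's proofs of Propositions \ref{Proposition_ConfigurationWreath} and \ref{Proposition_ConfigurationDirect}. Your observation that no injectivity hypothesis is needed---because the free superposition introduces no cross-signature symbols analogous to $E$ or $E_0, E_1$---is precisely why the statement here, unlike those two propositions, omits that hypothesis.
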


\begin{proof}
	This is essentially Proposition 4.11 of \cite{GuiPar}.
\end{proof}

\begin{definition}
    For each $t < 2$, let $L_t$ be a relational language and let $\KK_t$ be a class of finite $L_t$-structures closed under isomorphism.  We say that $\KK_0$ and $\KK_1$ have \emph{comparable cardinalities} if, for all $t < 2$ and for all $A \in \KK_t$, there exists $B \in \KK_{1-t}$ such that $|A| = |B|$.
\end{definition}

\begin{proposition}\label{Proposition_ConfigurationFreeConv}
    For each $t < 2$, let $L_t$ be a relational language and let $\KK_t$ be a class of finite $L_t$-structures closed under isomorphism.  Assume $\KK_0$ and $\KK_1$ have comparable cardinalities.  Let $L$ be a language, $M$ an $L$-structure, and $n < \omega$.  If there exists a $(\KK_0 * \KK_1)$-configuration into $M^n$, then there exists a $\KK_0$-configuration into $M^n$ and a $\KK_1$-configuration into $M^n$.
\end{proposition}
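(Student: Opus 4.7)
The plan is to restrict the given $(\KK_0 * \KK_1)$-configuration to each component language and then invoke comparable cardinalities to produce, for each $A \in \KK_t$, a witnessing structure in $\KK_0 * \KK_1$ whose $L_t$-reduct is $A$.

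More precisely, suppose $(I, f_C)_{C \in \KK_0 * \KK_1}$ is a $(\KK_0 * \KK_1)$-configuration into $M^n$, where $L_2$ has signature $\sig(L_0) \sqcup \sig(L_1)$. For each $t < 2$, define $I_t := I \restriction \sig(L_t)$, which is a map $\sig(L_t) \to \Fml(L,M)$. Now fix $t < 2$ and $A \in \KK_t$. By comparable cardinalities, there exists $B \in \KK_{1-t}$ with $|B| = |A|$; choose any bijection $\sigma_A : A \to B$ and use it to pull the $L_{1-t}$-structure of $B$ back to $A$. This endows $A$ with an $L_2$-structure $C_A$ whose $L_t$-reduct is literally $A$ and whose $L_{1-t}$-reduct is $L_{1-t}$-isomorphic to $B$; since $\KK_0 * \KK_1$ is defined by the separate conditions $C_A \restriction L_t \in \KK_t$ and $C_A \restriction L_{1-t} \in \KK_{1-t}$ (and $\KK_{1-t}$ is closed under isomorphism), we have $C_A \in \KK_0 * \KK_1$. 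Set $f'_A := f_{C_A}$.

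To verify that $(I_t, f'_A)_{A \in \KK_t}$ is a $\KK_t$-configuration into $M^n$, fix $R \in \sig(L_t)$ and $\oa \in A^{\arity(R)}$. By the construction of $C_A$, we have $A \models R(\oa)$ iff $C_A \models R(\oa)$, and by the defining property of the original $(\KK_0 * \KK_1)$-configuration, $C_A \models R(\oa)$ iff $M \models I(R)(f_{C_A}(\oa)) = I_t(R)(f'_A(\oa))$. Chaining the equivalences gives exactly the required condition, and applying this for each $t$ produces both a $\KK_0$-configuration and a $\KK_1$-configuration into $M^n$.

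There is essentially no obstacle: the argument is bookkeeping once one observes that the signature of the free superposition is a disjoint union, so each $R$ in $L_2$ already lies in one of the $L_t$, and that the comparable cardinalities assumption is exactly what is needed to realize an arbitrary $A \in \KK_t$ as the $L_t$-reduct of some member of $\KK_0 * \KK_1$. The role of comparable cardinalities is worth emphasizing, since without it one could have, for instance, $|A| \in \KK_0$ with no structure of the same size available in $\KK_1$, in which case $A$ need not be the reduct of any structure from $\KK_0 * \KK_1$ and the extraction of a $\KK_0$-configuration from the free superposition configuration could fail.
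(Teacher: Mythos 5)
Your argument is correct and is essentially the paper's proof written out in full: the paper simply observes that comparable cardinalities makes each $\KK_t$ a reductive subclass of $\KK_0 * \KK_1$ and then cites Lemma \ref{Lemma_ReductiveSubclass}, whose proof is exactly your construction of restricting $I$ to $\sig(L_t)$ and realizing each $A \in \KK_t$ as the $L_t$-reduct of some $C_A \in \KK_0 * \KK_1$. No gaps.
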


\begin{proof}
    Since $\KK_0$ and $\KK_1$ have comparable cardinalities, $\KK_0$ is a reductive subclass of $\KK_0 * \KK_1$.  By Lemma \ref{Lemma_ReductiveSubclass}, there exists a $\KK_0$-configuration into $M^n$.  Similarly, there exists a $\KK_1$-configuration into $M^n$.
\end{proof}

\begin{corollary}\label{Corollary_ClassFree}
    For each $t < 2$, let $L_t$ be a relational language and let $\KK_t$ be a class of finite $L_t$-structures closed under isomorphism.  If $\KK_0$ and $\KK_1$ have comparable cardinalities, then
    \[
        \CC_{\KK_0} \cap \CC_{\KK_1} = \CC_{\KK_0 * \KK_1}.
    \]
\end{corollary}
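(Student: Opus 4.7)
The plan is to derive this corollary immediately from the two preceding propositions, exactly as in the proofs of Corollaries \ref{Corollary_ClassWreath} and \ref{Corollary_ClassDirect}. The statement is a biconditional containment, so I will handle the two directions separately, and neither should require any new combinatorial work beyond what has already been set up.

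For the forward inclusion $\CC_{\KK_0} \cap \CC_{\KK_1} \subseteq \CC_{\KK_0 * \KK_1}$, I would fix a complete theory $T$ with an infinite model in $\CC_{\KK_0} \cap \CC_{\KK_1}$. By definition, $T$ admits both a $\KK_0$-configuration and a $\KK_1$-configuration, which (by Lemma \ref{Lemma_SaturatedConfig}, if one wants a common ambient model) can be realized inside the same sufficiently saturated $M \models T$; say there is a $\KK_0$-configuration into $M^{n_0}$ and a $\KK_1$-configuration into $M^{n_1}$. Then Proposition \ref{Proposition_ConfigurationFree} directly yields a $(\KK_0 * \KK_1)$-configuration into $M^{n_0 + n_1}$, so $T \in \CC_{\KK_0 * \KK_1}$.

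For the reverse inclusion $\CC_{\KK_0 * \KK_1} \subseteq \CC_{\KK_0} \cap \CC_{\KK_1}$, I would fix $T \in \CC_{\KK_0 * \KK_1}$, so there is some $M \models T$ and $n < \omega$ with a $(\KK_0 * \KK_1)$-configuration into $M^n$. Here the hypothesis of comparable cardinalities is essential: it is precisely what Proposition \ref{Proposition_ConfigurationFreeConv} needs in order to extract a $\KK_0$-configuration and a $\KK_1$-configuration into $M^n$, giving $T \in \CC_{\KK_0} \cap \CC_{\KK_1}$.

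There is essentially no obstacle; the only subtlety worth flagging is the role of the comparable cardinalities hypothesis, which guarantees that $\KK_t$ is a reductive subclass of $\KK_0 * \KK_1$ for each $t < 2$ (by matching each $A \in \KK_t$ with a same-sized $B \in \KK_{1-t}$ and equipping $A$ with an arbitrary $L_{1-t}$-structure transported from $B$). Without this assumption the reductive subclass argument fails: for instance, if $\KK_0$ has a singleton but $\KK_1$ only has structures of size at least two, there is no way to reduct a $(\KK_0 * \KK_1)$-structure to a singleton in $\KK_0$. So the proof will mirror the two earlier corollaries essentially verbatim, citing Proposition \ref{Proposition_ConfigurationFree} for one direction and Proposition \ref{Proposition_ConfigurationFreeConv} for the other.
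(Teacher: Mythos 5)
Your proof is correct and is exactly the argument the paper intends: the forward inclusion follows from Proposition \ref{Proposition_ConfigurationFree} (after using saturation to place both configurations in a common model), and the reverse inclusion follows from Proposition \ref{Proposition_ConfigurationFreeConv}, with comparable cardinalities serving precisely to make each $\KK_t$ a reductive subclass of $\KK_0 * \KK_1$. This mirrors the paper's proofs of Corollaries \ref{Corollary_ClassWreath} and \ref{Corollary_ClassDirect}, so there is nothing to add.
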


We can stitch together the results from this section to get the following theorem, which basically says that all products act in the same manner with respect to configurations.

\begin{theorem}\label{Theorem_Classes}
	For each $t < 2$, let $L_t$ be a relational language and let $\KK_t$ be a class of finite $L_t$-structures closed under isomorphism that contains a singleton structure.  If $\KK_0$ and $\KK_1$ have comparable cardinalities, then
	\[
		\CC_{\KK_0 \btimes \KK_1} = \CC_{\KK_0 \wreath \KK_1} = \CC_{\KK_0 * \KK_1} = \CC_{\KK_0} \cap \CC_{\KK_1}.
	\]
\end{theorem}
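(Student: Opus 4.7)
The plan is straightforward: this theorem is really a corollary that collects the three preceding corollaries \ref{Corollary_ClassWreath}, \ref{Corollary_ClassDirect}, and \ref{Corollary_ClassFree}, which each establish one of the three equalities with $\CC_{\KK_0} \cap \CC_{\KK_1}$ under appropriate hypotheses. So the task is simply to verify that our hypotheses imply the hypotheses of all three corollaries.

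First I would note that since each $\KK_t$ contains a singleton structure, in particular each $\KK_t$ contains a non-empty structure. This means the hypotheses of Corollary \ref{Corollary_ClassWreath} (both classes contain a singleton structure) and Corollary \ref{Corollary_ClassDirect} (both classes contain a non-empty structure) are satisfied, yielding
\[
    \CC_{\KK_0 \wreath \KK_1} = \CC_{\KK_0} \cap \CC_{\KK_1} = \CC_{\KK_0 \btimes \KK_1}.
\]
Next, the comparable cardinalities assumption is exactly the hypothesis of Corollary \ref{Corollary_ClassFree}, giving
\[
    \CC_{\KK_0 * \KK_1} = \CC_{\KK_0} \cap \CC_{\KK_1}.
\]
Chaining these three equalities together yields the desired conclusion.

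There is no real obstacle here, since the substantive work has already been done in Propositions \ref{Proposition_ConfigurationWreath}, \ref{Proposition_ConfigurationWreathConv}, \ref{Proposition_ConfigurationDirect}, \ref{Proposition_ConfigurationDirectConv}, \ref{Proposition_ConfigurationFree}, and \ref{Proposition_ConfigurationFreeConv}, and packaged into the three corollaries. The singleton hypothesis is the strongest of the three technical assumptions (it implies the non-empty hypothesis and, together with closure under isomorphism, ensures each class has structures of each positive cardinality relevant for comparability, though we still need comparable cardinalities for the free superposition part because singletons alone do not guarantee matching cardinalities at higher sizes). The proof is thus a one-line citation of the three corollaries.
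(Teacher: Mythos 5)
Your proposal is correct and follows exactly the paper's own proof, which simply cites Corollaries \ref{Corollary_ClassWreath}, \ref{Corollary_ClassDirect}, and \ref{Corollary_ClassFree}. Your additional remarks verifying that the singleton hypothesis implies the non-empty hypothesis and that comparable cardinalities is still needed for the free superposition are accurate and only make the hypothesis-checking explicit.
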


\begin{proof}
	Follows from Corollary \ref{Corollary_ClassWreath}, Corollary \ref{Corollary_ClassDirect}, and Corollary \ref{Corollary_ClassFree}.
\end{proof}

Theorem \ref{Theorem_Classes} says that using products will not help answer Question \ref{Question_Classes}.

\begin{corollary}
    Let $\mathbf{P}_0$ be a subclass of $\mathbf{P}$ as defined in Question \ref{Question_Classes} that is linearly ordered under inclusion.  Then, $\mathbf{P}_0$ is ``closed'' under taking lexicographic products, full products, and free superposition.
\end{corollary}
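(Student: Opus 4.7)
The plan is to combine Theorem \ref{Theorem_Classes} with the defining property of $\mathbf{P}_0$ being linearly ordered. First I would take any two elements $\CC_{\KK_0}, \CC_{\KK_1} \in \mathbf{P}_0$ arising from classes $\KK_0, \KK_1$ that satisfy the mild hypotheses of Theorem \ref{Theorem_Classes} (each contains a singleton structure and the two have comparable cardinalities). By that theorem, each of $\CC_{\KK_0 \wreath \KK_1}$, $\CC_{\KK_0 \btimes \KK_1}$, and $\CC_{\KK_0 * \KK_1}$ coincides with $\CC_{\KK_0} \cap \CC_{\KK_1}$. Next, since $\mathbf{P}_0$ is linearly ordered by $\subseteq$, either $\CC_{\KK_0} \subseteq \CC_{\KK_1}$ or $\CC_{\KK_1} \subseteq \CC_{\KK_0}$, so $\CC_{\KK_0} \cap \CC_{\KK_1}$ is just whichever of the two is smaller, hence already a member of $\mathbf{P}_0$. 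Thus each of the three product-classes lies in $\mathbf{P}_0$, which is the desired closure statement.

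The only delicate point is pinning down the meaning of the word ``closed,'' which is placed in quotation marks in the statement. The closure should be interpreted relative to the assumptions of Theorem \ref{Theorem_Classes}; without those hypotheses, the equality of the three product classes with $\CC_{\KK_0} \cap \CC_{\KK_1}$ can fail, and the argument collapses. So the precise reading is: whenever $\CC_{\KK_0}, \CC_{\KK_1} \in \mathbf{P}_0$ and $\KK_0, \KK_1$ each contain a singleton and have comparable cardinalities, the three classes $\CC_{\KK_0 \wreath \KK_1}$, $\CC_{\KK_0 \btimes \KK_1}$, $\CC_{\KK_0 * \KK_1}$ are again in $\mathbf{P}_0$. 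Beyond spelling this out, I do not expect any real obstacle: the argument is a one-line invocation of Theorem \ref{Theorem_Classes} together with the elementary observation that any linearly ordered family of sets is closed under binary intersection.
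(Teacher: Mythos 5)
Your argument is correct and is exactly the intended one: the paper states this corollary as an immediate consequence of Theorem \ref{Theorem_Classes}, with the product classes collapsing to $\CC_{\KK_0} \cap \CC_{\KK_1}$, which in a linearly ordered family is just the smaller of the two members. Your caveat about the scare quotes around ``closed'' (i.e., that the hypotheses of the theorem must be assumed) matches the paper's implicit reading.
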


\begin{example}
	For example,
	\[
		\CC_{\LO \btimes \GG} = \CC_{\LO \wreath \GG} = \CC_{\LO * \GG} = \CC_{\LO} \cap \CC_{\GG} = \CC_{\GG}.
	\]
\end{example}

\section{Conclusion}\label{Section_Conclusion}

Let $\KK_0$ and $\KK_1$ be classes of finite structures in a relational language that are closed under isomorphism.  The following table summarizes the known results on the preservation of properties under products of classes of structures.  Where $\KK_0$ and $\KK_1$ having a property implies that product also has the property (sometimes under mild assumptions on $\KK_0$ and $\KK_1$), we will write ``yes''.  The two results on 2-amalgamation and the free superposition were known prior to this paper.

\vspace{.1in}

\begin{center}
\setlength{\tabcolsep}{6pt}
\renewcommand{\arraystretch}{1.5}
	\begin{tabular}{| l | c | c | c |}
		 \hline & $\KK_0 \wreath \KK_1$ & $\KK_0 \btimes \KK_1$ & $\KK_0 * \KK_1$ \\ \hline \hline
		\AgeIndivisibility & \hyperref[Theorem_WreathAgeInd]{Yes} & \hyperref[Theorem_DirectAgeInd]{Yes} & \hyperref[Question_FreeAgeInd]{Open} \\ \hline
		Definable Self-Similarity & \hyperref[Example_NotDefSelfSim]{No} & \hyperref[Example_NotDefSelfSim]{No} & \hyperref[Proposition_FreeDefSelfSim]{Yes} \\ \hline
		Amalgamation Property & \hyperref[Proposition_WreathDirectAP]{Yes} & \hyperref[Proposition_WreathDirectAP]{Yes} & No \\ \hline
		Strong Amalgamation & \hyperref[Theorem_WreathStrongAmalg]{Yes} & \hyperref[Example_Direct2Amalg]{No} & Yes \\ \hline
		Disjoint $n$-Amalgamation, $n \ge 3$ & \hyperref[Example_WreathDirectnAmalg]{No} & \hyperref[Example_WreathDirectnAmalg]{No} & \hyperref[Theorem_FreenAmalgamation]{Yes} \\ \hline
	\end{tabular}
\end{center}

\vspace{.1in}

Although we have answered some of the questions about the interaction between properties of classes of structures and products on classes of structures, there are a few questions that remain open.  For example, in Question \ref{Question_FreeAgeInd}, we asked if \ageindivisibility\ is preserved under the free superposition.  It is known that, assuming the hereditary property, being definably self-similar is preserved under the free superposition (Proposition \ref{Proposition_FreeDefSelfSim}).  Moreover, when working with \fraisse~classes with strong amalgamation, being definably self-similar implies being \ageindivisible\ (Lemma \ref{Lemma_DefSelfSimAgeInd}).  Therefore, it seems possible that \ageindivisibility\ is preserved under free superposition, at least for \fraisse~classes.  On the other hand, indivisibility of the \fraisse~limit is not preserved under free superposition (Example \ref{Example_FreeIndivis}).

Addressing Question \ref{Question_Classes}, whether there are additional dividing lines of the form $\CC_{\KK}$, would be of interest, from the standpoint of finding model-theoretic dividing lines.  Evidently, using \wreathproduct s, \directproduct s, or free superpositions does not help answer this question with our current knowledge.  However, if there were classes $\KK_0$ and $\KK_1$ such that $\CC_{\KK_0}$ and $\CC_{\KK_1}$ were incomparable, then $\CC_{\KK_0 * \KK_1}$ would produce a class distinct from $\CC_{\KK_0}$ and $\CC_{\KK_1}$.  Moreover, we are only analyzing the effect of products on configurations at the level of theories.  It may be of interest to develop a $\KK$-rank like what was done in \cite{GuiPar} using another product in place of free superposition.  Would such a product produce a rank that behaves more ``rank-like'' (e.g., has additivity, as in Open Question 6.4 of \cite{GuiPar})?  In future work, we hope to explore this avenue.

This paper presents a generalization of configurations from \cite{GuiPar} to general classes of structures (without assuming $\KK$ is a \fraisse~class with the strong amalgamation property).  We thank the anonymous referee for suggesting the following line of inquiry: 

\begin{question}
    Does one obtain any new classes, $\CC_{\KK}$, considering a more general class $\KK$?
\end{question}

\begin{question}
    What are some of the limitations to the dividing lines obtainable as $\CC_{\KK}$ for some $\KK$?
\end{question}

For example, it is shown Remark 4.21 of \cite{GuiHill} that $\CC_{\KK}$ can never be the class of non-simple theories.  What other ``natural'' dividing lines are not captured by this analysis?  We intend to explore these questions in future endeavors.

\section*{Acknowledgements}

The authors thank Rehana Patel for referring us to \cite{sauer2020} for a more general result than Lemma \ref{Lemma_DefSelfSimAgeInd}, which led to finding the exact result in \cite{FrSa00}, for proper attribution.  We thank James Hanson for helpful insights after seeing the results of this paper presented at a seminar.  We thank Nadav Meir for comments and references that improved the presentation of this paper.

The third author thanks Christian Rosendal for his unpublished notes that gave a detailed exposition of the automorphism group of a certain \fraisse~limit of trees as a wreath product and Dugald Macpherson for the suggestion to read \cite{Cam87} for a reference on wreath products.

Finally, we thank the anonymous referee for references and suggestions about reorganization that significantly improved the presentation of this paper.

\begin{bibdiv}
\begin{biblist}

\bib{Adler}{article}{
    author = {Adler, H.},
    title = {Strong theories, burden, and weight},
    note = {preprint},
    year = {2007},
}

\bib{bod14}{article}{
    author = {Bodirsky, M.},
    title = {New Ramsey Classes from Old},
    journal = {The Electronic Journal of Combinatorics},
    volume = {21},
    number = {1},
    year = {2014}
}

\bib{bod15}{inproceedings}{
    place={Cambridge},
    series={London Mathematical Society Lecture Note Series},
    title={Ramsey classes: examples and constructions},
    booktitle={Surveys in Combinatorics},
    publisher={Cambridge University Press},
    author={Bodirsky, Manuel},
    year={2015},
    pages={1–48}
}

\bib{Cam87}{article}{
	author = {Cameron, P.},
    title = {Some treelike objects},
    journal = {The Quarterly Journal of Mathematics},
    volume = {38},
    number = {2},
    pages = {155-183},
    year = {1987}
}

\bib{Cam90}{book}{
	author = {Cameron, P.},
	title = {Oligomorphic Permutation Groups},
	publisher = {Cambridge University Press, Cambridge},
	year = {1990}
}

\bib{Chernikov}{article}{
   author={Chernikov, Artem},
   title={Theories without the tree property of the second kind},
   journal={Ann. Pure Appl. Logic},
   volume={165},
   date={2014},
   number={2},
   pages={695--723},
}

\bib{ezs89}{article}{
   author={El-Zahar, M.},
   author={Sauer, N.},
   title={The indivisibility of the homogeneous $K_n$-free graphs},
   journal={J. Combin. Theory Ser. B},
   volume={47},
   date={1989},
   number={2},
   pages={162--170},
}

\bib{ezs}{article}{
    author = {M. El-Zahar},
    author = {N.W. Sauer},
    title = {Ramsey-type properties of relational structures},
    journal = {Discrete Mathematics},
    volume = {94},
    number = {1},
    pages = {1-10},
    year = {1991}
}

\bib{Fr54}{article}{
   author={Fra\"{\i}ss\'{e}, Roland},
   title={Sur l'extension aux relations de quelques propri\'{e}t\'{e}s des ordres},
   language={French},
   journal={Ann. Sci. Ecole Norm. Sup. (3)},
   volume={71},
   date={1954},
}

\bib{FrSa00}{book}{
   author={Fra\"{\i}ss\'{e}, Roland},
   title={Theory of relations},
   series={Studies in Logic and the Foundations of Mathematics},
   volume={145},
   edition={Revised edition},
   note={With an appendix by Norbert Sauer},
   publisher={North-Holland Publishing Co., Amsterdam},
   date={2000},
   pages={ii+451}
}



\bib{GuiHill}{article}{
  author={Guingona, Vince},
  author={Hill, Cameron D.},
  title={On positive local combinatorial dividing-lines in model theory},
  journal={Arch. Math. Logic},
  volume={58},
  pages={289-–323},
  date={2019}
}

\bib{GuiHillScow}{article}{
  author={Guingona, Vince}, 
  author={Hill, Cameron D.},
  author={Scow, Lynn},
  title={Characterizing model-theoretic dividing lines via collapse of generalized indiscernibles},
  journal={Ann. Pure Appl. Log},
  volume={168},
  number={5},
  pages={1091-1111},
  date={2017}
}

\bib{GuiPar}{article}{
    author = {Guingona, Vince},
    author = {Parnes, Miriam},
    year = {2023},
    title = {Ranks based on algebraically trivial Fraisse classes},
    journal={Arch. Math. Logic},
    pages = {1-41}
}

\bib{HKO}{article}{
    title = {On symmetric indivisibility of countable structures},
    author = {Hasson, Assaf},
    author = {Kojman, Menachem},
    author = {Onshuus, Alf},
    year = {2014},
    pages = {}
}

\bib{Henson}{article}{
   author={Henson, C. Ward},
   title={A family of countable homogeneous graphs},
   journal={Pacific J. Math.},
   volume={38},
   date={1971},
   pages={69--83},
}

\bib{BigHodges}{book}{
    title={Model Theory},
    publisher={Cambridge University Press},
    author={Hodges, Wilfred},
    volume={42},
    year={1993}
}

\bib{KOU}{article}{
   author={Kaplan, Itay},
   author={Onshuus, Alf},
   author={Usvyatsov, Alexander},
   title={Additivity of the dp-rank},
   journal={Trans. Amer. Math. Soc.},
   volume={365},
   date={2013},
   number={11},
   pages={5783--5804},
}

\bib{kpt05}{article}{
   author={Kechris, A. S.},
   author={Pestov, V. G.},
   author={Todorcevic, S.},
   title={Fra\"{\i}ss\'{e} limits, Ramsey theory, and topological dynamics of
   automorphism groups},
   journal={Geom. Funct. Anal.},
   volume={15},
   date={2005},
   number={1},
   pages={106--189}
}


\bib{KR86}{article}{
   author={Komj\'{a}th, P\'{e}ter},
   author={R\"{o}dl, Vojt\v{e}ch},
   title={Coloring of universal graphs},
   journal={Graphs Combin.},
   volume={2},
   date={1986},
   number={1},
   pages={55--60},
}

\bib{Kru}{article}{
	title={Disjoint n-Amalgamation and Pseudofinite Countably Categorical Theories},
	author={Kruckman, Alex},
	journal={Notre Dame J. Formal Log.},
	year={2019},
	volume={60},
	pages={139-160}
}

\bib{Kur}{article}{
	author={Kuratowski, K.},
	title={Sur le probl\`{e}me des courbes gauches en topologie},
	journal={Fundamenta Mathematicae},
	language={French},
	volume={15},
	pages={271--283},
	year={1930}
}

\bib{Marker}{book}{
   author={Marker, D.},
   title={Model Theory: An Introduction},
   series={Graduate Texts in Mathematics},
   volume={217},
   publisher={Springer-Verlag, New York},
   date={2000},
   pages={1-342}
}



\bib{Meir16}{article}{
    author = {Meir, Nadav},
    journal = {The Journal of Symbolic Logic},
    number = {3},
    pages = {951--971},
    title = {On products of elementarily indivisible structures},
    volume = {81},
    year = {2016}
}

\bib{Meir22}{article}{
    author={Meir, Nadav},
    title = {Infinite lexicographic products},
    journal = {Annals of Pure and Applied Logic},
    volume = {173},
    number = {1},
    pages = {102991},
    year = {2022}
}

\bib{Morley}{article}{
   author={Morley, Michael},
   title={Categoricity in power},
   journal={Trans. Amer. Math. Soc.},
   volume={114},
   date={1965},
   pages={514--538},
}

\bib{Ne05}{article}{
   author={Ne\v{s}et\v{r}il, Jaroslav},
   title={Ramsey classes and homogeneous structures},
   journal={Combin. Probab. Comput.},
   volume={14},
   date={2005},
   number={1-2},
   pages={171--189}
}


\bib{sauer2020}{article}{
	author = {Sauer, N.~W.},
	year = {2020},
	title = {Colouring homogeneous structures},
	note = {https://arxiv.org/abs/2008.02375}
}

\bib{Scow2012}{article}{
	author = {Scow, Lynn},
	title = {Characterization of NIP theories by ordered graph-indiscernibles},
	journal = {Annals of Pure and Applied Logic},
	volume = {163},
	number = {11},
	pages = {1624-1641},
	year = {2012}
}

\bib{sc21}{article}{
   author={Scow, Lynn},
   title={Ramsey transfer to semi-retractions},
   journal={Ann. Pure Appl. Logic},
   volume={172},
   date={2021},
   number={3},
   pages={Paper No. 102891, 18},
}

\bib{sh78}{book}{
   author={Shelah, Saharon},
   title={Classification theory and the number of nonisomorphic models},
   series={Studies in Logic and the Foundations of Mathematics},
   volume={92},
   publisher={North-Holland Publishing Co., Amsterdam-New York},
   date={1978},
   pages={xvi+544},
}




\bib{sokQuotients}{article}{
   author={Soki\'{c}, Miodrag},
   title={Relational quotients},
   journal={Fund. Math.},
   volume={221},
   date={2013},
   number={3},
   pages={189--220}
}

\bib{Tou}{article}{
    author={Touchard, Pierre},
    title={On model theory of valued vector spaces},
    year={2021},
    note = {https://arxiv.org/abs/2111.15516}
}

\end{biblist}
\end{bibdiv}

\end{document}